\tikzstyle{every node}=[font=\small]
\DeclareFontFamily{U}{mathx}{\hyphenchar\font45}
\DeclareFontShape{U}{mathx}{m}{n}{<-> mathx10}{}
\DeclareSymbolFont{mathx}{U}{mathx}{m}{n}
\DeclareMathAccent{\widebar}{0}{mathx}{"73}
\newcommand\AUT{\operatorname{\sf AUT}}
\newcommand\Ccal{\mathcal C}
\newcommand\V{\mathbf{V}}
\newcommand\N{\mathbb N}
\newcommand\Z{\mathbb{Z}}
\newcommand\Si{\mathbf{\Sigma}}
\newcommand\wh{\widehat}
\newcommand\then{\!\Rightarrow\!}
\newcommand\thens{\!\impliess\!}
\newcommand\ray{\rho}
\newcommand\SAW{\textrm{SAW}}
\newcommand\ol{\overline}
\newcommand\Pb{\mathbf{P}}
\newcommand\diam{\operatorname{\rm diam}}
\newcommand\impliess{\buildrel * \over \Rightarrow}
\newcommand\AND{\quad\text{and}\quad}
\newcommand\0{\kern 0.067em}
\begin{document}

\newtheorem{thm}{Theorem}[section]
\newtheorem{pro}[thm]{Proposition}
\newtheorem{lem}[thm]{Lemma}
\newtheorem{cor}[thm]{Corollary}

\theoremstyle{definition}
\newtheorem{con}[thm]{Convention}
\newtheorem{dfn}[thm]{Definition}
\newtheorem{exa}[thm]{Example}
\newtheorem{exs}[thm]{Examples}
\newtheorem{rmk}[thm]{Remark}

\title{The language of self-avoiding walks}
\author{\bf Christian LINDORFER and Wolfgang WOESS}
\address{\parbox{.8\linewidth}{Institut f\"ur Diskrete Mathematik,\\ 
Technische Universit\"at Graz,\\
Steyrergasse 30, A-8010 Graz, Austria\\ $\,$}}

\email{lindorfer@math.tugraz.at, woess@tugraz.at}

\date{\today} 
\subjclass[2010] {20F10, 
                  68Q45,  
                  05C25}   
		  
\keywords{Self-avoiding walks, labelled graph, Cayley graph, regular and context-free languages}

\begin{abstract} Let $X=(V\!X,E\!X)$ be an infinite, locally finite, connected graph without loops or multiple edges. We consider the edges to be oriented, and $E\!X$ is equipped with an involution which inverts the orientation. Each oriented edge is  labelled by an element of a finite
alphabet $\Si$. The labelling is assumed to be deterministic: edges with the same initial
(resp. terminal) vertex have distinct labels. Furthermore it is assumed that the group of label-preserving automorphisms of $X$ acts quasi-transitively. For any vertex $o$ of $X$, consider the language of all words over $\Si$ which can be read along self-avoiding walks starting at $o$.
We characterize under which conditions on the graph structure this language is regular or context-free. This is the case if and only if the graph has more than one end, and the size of all ends is $1$, or at most $2$, respectively.
\end{abstract}

\maketitle

\markboth{{\sf C. Lindorfer and W. Woess}}
{{\sf The language of self-avoiding walks}}
\baselineskip 15pt


\section{Introduction}\label{sec:intro}

Let $X=(V\!X,E\!X)$ be a locally finite, connected, infinite graph without loops or multiple edges. 
We think of the edges as being directed, so that each  $e \in E\!X$ has an initial vertex $e^-$ and a terminal vertex $e^+$;
there is an involution $e \mapsto \check e$ which exchanges those endpoints. In drawing a
graph, every pair $(e,\check e)$ will usually be represented (and thought of) as one 
undirected edge. We shall always consider \emph{quasi-transitive} graphs, i.e., the automorphism
group of $X$ is supposed to act with finitely many orbits on $X$.

A \emph{walk} in $X$ is a subgraph spanned by a 
sequence $\pi = [e_1\,,e_2\,,\dots, e_n]$
with $e_k^- = e_{k-1}^+\,.$ Its length is $|\pi|=n$, and its initial and terminal 
vertices are $e_1^-$ and $e_n^+\,$, respectively. This comprises the empty walk 
starting (and ending) at a vertex, which has length $0$. 
A \emph{self-avoiding} walk (SAW) is one which
visits no vertex twice.  

We choose a root vertex $o$ and consider the number $c_n(o)$ of all SAWs of length $n$
starting at $o$. {\sc Hammersley}~\cite{Ham} proved that in a quasi-transitive
graph, the limit
$$
\mu(X) = \lim_{n \to \infty} c_n(o)^{1/n}
$$
exists and is independent of the choice of the root $o$. It is the reciprocal of
the radius of convergence of the SAW-generating function
$$
F_{\SAW}(t) = F_{\SAW}(t|o) = \sum_{n=0}^{\infty} c_n(o)\,t^n.
$$
The number $\mu(X)$ is called the \emph{connective constant} of $X$. 
The study of self-avoiding walks, primarily in lattice graphs, was initiated by
the chemist and Nobel laureate {\sc Paul J. Flory}~\cite{Fl} as a tool for studying  
polymer growth. It is a combinatorial issue whose mathematical study has been advanced
to a large extent within Statistical Physics and Probability.
Good references are the monograph by {\sc Madras and Slade}~\cite{MaSl} and
the lecture notes by {\sc Bauerschmidt et al.}~\cite{BDGS}. The explicit computation
of $F_{\SAW}(t)$ or just $\mu(X)$ is a difficult task. A highlight is the rigorous
result of {\sc Duminil-Copin and Smirnov} ~\cite{DuSm}  for the hexagonal lattice,
while even for the standard square lattice, $\mu(X)$ is not known rigorously.

Here, we propose an apparently new approach, by connecting self-avoiding walks with the
theory of formal languages. Given a finite alphabet $\Si$, a language over $\Si$
is a subset $L$ of $\Si^*$, the free monoid over $\Si$ consisting of all
words with finite length whose letters come from $\Si$. Our setting is as follows.
We have a pair $(X,\ell)$, where $X$ is a graph as above, and $\ell: E\!X \to \Si$
is a \emph{labelling} which assigns to every oriented edge $e$ a label $\ell(e)$. 
Our assumptions are that the labelling is \emph{deterministic,} that is,
different edges with the same initial or terminal vertex have distinct labels, and that the group
$\AUT(X,\ell)$ of all graph automorphisms of $X$ which preserve $\ell$ acts
quasi-transitively (with finitely many orbits). The most significant class of
labelled graphs are the \emph{Cayley graphs} of finitely generated groups. 

The labelling extends to all walks:
for $\pi=[e_1\,,e_2\,,\dots, e_n]$, we set 
$$
\ell(\pi) = \ell(e_1)\ell(e_2)\dots\ell(e_n) \in \Si^*.
$$ 
For any set of walks $\Pi$ in the labelled graph $X$, the 
associated language is 
$$
L(\Pi)= \{ \ell(\pi) : \pi \in \Pi \}\,.
$$
Now let $\Pi_{\SAW} = \Pi_{\SAW,o}$ be the set of all self-avoiding walks
starting at $o$. The associated \emph{language of self-avoiding walks} is
$L_{\SAW} = L_{\SAW,o}(X) =  L\bigl(\Pi_{\SAW,o}\bigr)$.

In the Chomsky-hierarchy of formal languages, the first basic class
consists of the \emph{regular languages,}  which are those accepted
by a finite state automaton, or equivalently, generated by a 
right-linear grammar. The second class consists of the \emph{context-free}
languages, which are those accepted by a pushdown automaton, resp., generated
by a context free grammar. (Here, we shall use grammars.) Further details
will be given below, and an excellent reference is the monograph by
{\sc Harrison}~\cite{Ha}. 
Our main result is the following.

\begin{thm}\label{thm:main} Let $(X,\ell)$ be a deterministically labelled,
quasi-transitive graph, connected and locally finite. For any choice of the
root vertex, the following holds. 
\\[3pt]
\emph{(a)} $L_{\SAW}$ is regular if and only if $X$ has more than one end
and all ends have size $1$.
\\[3pt]
\emph{(b)} $L_{\SAW}$ is context-free if and only if $X$ has more than one end
and all ends have size at most $2$. In this case, $L_{\SAW}$ is unambiguous
context-free. 
\end{thm}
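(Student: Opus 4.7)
Since the labelling is deterministic, every word $w\in\Si^*$ can be read along at most one walk starting at $o$, so $\ell$ is a bijection from $\Pi_{\SAW,o}$ onto $L_{\SAW}$. Constructing an automaton or grammar for $L_{\SAW}$ is therefore equivalent to generating the SAWs themselves, and any such construction is automatically unambiguous; this takes care of the final sentence of part (b). For the two ``if'' directions my plan is to exploit the tree-like structure of multi-ended quasi-transitive graphs. A Dunwoody--Stallings-style accessibility argument applied to the label-preserving group $\AUT(X,\ell)$ produces an invariant family of finite cuts whose sizes are bounded by the maximal end size, partitioning $X$ into finitely many isomorphism classes of pieces glued in a tree pattern. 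In case (a) the cuts are single vertices, so a SAW that leaves a piece can never re-enter it without revisiting the cut vertex; this yields a finite automaton whose states encode (current piece type, current vertex, entry cut vertex). In case (b) cuts have at most two vertices and a SAW may exit a piece through its second cut vertex, producing the nested excursion pattern that context-free grammars are designed for, captured by nonterminals indexed by (piece type, entry vertex, prospective exit vertex or $\bot$). Quasi-transitivity keeps the number of states and nonterminals finite.

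The converses are pumping arguments. If $X$ has an end of size $\ge 2$ (for (a)) or $\ge 3$ (for (b)), pick two (resp.\ three) pairwise disjoint rays inside that end together with arbitrarily long transverse bridges, and concatenate ray-segments and bridges into SAWs whose labels carry two (resp.\ three) independent length parameters; the regular pumping lemma (resp.\ Ogden's lemma) then forces a repetition of a label along one of the rays, which by deterministic labelling forces a repeated vertex, contradicting self-avoidance. If instead $X$ is one-ended, there is no finite cut with more than one infinite component; pumping obstructions then come from quasi-transitive translations of a fixed SAW motif along a ray, yielding families of SAWs whose labels can only be repeated along closed loops in $X$, so any candidate pumped factor would traverse a cycle and, by the word-to-walk bijection, destroy self-avoidance.

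The principal difficulty is the context-free converse for ends of size $\ge 3$. Ogden's lemma pumps only two marked factors simultaneously, so ruling out every decomposition of an exhibited label word requires a construction in which pumping any pair among three independent parameters yields an inconsistency. This demands three pairwise disjoint rays whose label sequences can be independently scaled via quasi-transitivity, plus a clean conversion from an illegal label pump to a forbidden revisit. A secondary technical point is making the accessibility decomposition $\AUT(X,\ell)$-invariant rather than merely $\AUT(X)$-invariant, so that the grammar construction in case (b) truly features finitely many piece types modulo the label-preserving group.
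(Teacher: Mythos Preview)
Your ``if'' directions are in the right spirit; the paper uses the block--cutvertex tree for size~$1$ and Tutte's 3-block tree for size~$2$ rather than a general Dunwoody--Stallings splitting, but either route should work once finiteness of the pieces is established. One small correction: the bijection $\ell\colon\Pi_{\SAW,o}\to L_{\SAW}$ does not by itself make an arbitrary grammar for $L_{\SAW}$ unambiguous; unambiguity is a property of the specific grammar and must be verified for the one you actually build.

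The genuine gap is your ``only if'' argument for thick ends, including the one-ended case. Your claim that in a one-ended graph ``any candidate pumped factor would traverse a cycle'' is false: take $X=\Z^2$ with its Cayley labelling and a long straight SAW; pumping yields a longer straight SAW, still self-avoiding---no cycle, no contradiction. In general a successfully pumped factor traces out a periodic self-avoiding ray, which is perfectly compatible with membership in $L_{\SAW}$, so pumping alone gives no obstruction. The same issue undermines your multi-ended thick-end case: three disjoint rays in a thick end need not be related by any label-preserving automorphism, so there is no evident way to manufacture the three independently scalable length parameters your Ogden-style argument requires.

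The paper closes this gap with machinery absent from your sketch. It first shows that if $L_{\SAW}$ is context-free then pumping actually \emph{produces} a non-elliptic $\tau\in\AUT(X,\ell)$ (the periodic pumped ray plus quasi-transitivity yield the shift), which already rules out the case where $\AUT(X,\ell)$ is a torsion group. In a one-ended graph such a $\tau$ must be parabolic, and Halin's theorem then supplies $\tau$-invariant strips of every width, to which a concrete 3-ladder pumping construction (after intersecting $L_{\SAW}$ with a suitable regular language to pin down the form of the word) applies. When $X$ has more than one end including a thick one, the paper invokes accessibility in the sense of Thomassen--Woess together with Bass--Serre theory to locate a one-ended subgroup acting quasi-transitively on a subgraph, and reduces to the previous case via closure of context-freeness under passage to such subgraphs. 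Halin's automorphism classification and accessibility theory appear unavoidable here; your sketch does not provide a substitute for them.
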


See below for the meaning of ``unambiguous''. A precise definition of an end of 
a graph and its size will also be given below; a reference close to the
spirit of the present paper is {\sc Thomassen and Woess}~\cite{ThWo}.

Relating walks in labelled graphs, in particular Cayley graphs, with 
formal language theory has an important history. Let $\Pi(o,o)$ be the 
set of all walks in $X$ starting and ending at $o$, possibly with several
self-intersections. Let $W = W(o) = L\bigl(\Pi(o,o)\bigr)$. For a Cayley
graph of a group, this language is called the \emph{word problem.}

{\sc Anisimov}~\cite{An} showed that the word problem is regular if and
only if the group is finite, and this extends to quasi-transitive labelled
graphs. In ground-breaking work, {\sc Muller and Schupp}~\cite{MS1} showed
that the word problem is context-free if and only if the group
is virtually free. In particular, regularity, resp. context-freeness of the
word problem are group invariants which do not depend on the specific
generating set. In subsequent work \cite{MS2}, context-free labelled graphs
were defined via structural properties (not necessarily quasi-transitive), 
and these are precisely the (deterministically) labelled graphs for which
$W(o)$ is context-free; see {\sc Ceccherini and Woess}~\cite{CeWo2}.
For further work on language-theoretic issues related with groups , see e.g. 
{\sc P\'elecq}~\cite{Pe}, as well as \cite{CeWo1}, \cite{Wcf2}, and for a new proof
of the main result of \cite{MS1} and related material, {\sc Dieckert and
Weiss}~\cite{DW}.

Applied to a Cayley graph of a group, Theorem \ref{thm:main} says that the group is virtually free if $L_{SAW}$ is context-free, but the latter property is not a group invariant.

The inspiration for the present work came from a note by 
{\sc Gilch and M\"uller}~\cite{GM}, who determined $F_{\SAW}(t)$
for free products of finite graphs -- an instance of the 
case where $L_{\SAW}$ is regular. Furthermore, in the computation by
{\sc Zeilberger}~\cite{Z} of $F_{\SAW}(t)$ for the bi-infinite ladder graph,
a context-free grammar is inherent although not mentioned or used
directly. 

This paper is organised as follows. In \S \ref{sec:basics}, we provide the 
necessary background on the end space of graphs, as well as on regular
and context-free languages. In \S \ref{sec:ladders}, we discuss
strips in locally finite graphs, that is, two-ended quasi-transitive subgraphs.
Their ends have the same finite size -- the size of the strip. 
In the quasi-transitive case, if there is an end of finite size $m$,  
there must be a strip of the same size. Furthermore, if there is a thick end
which is fixed by some non-elliptic automorphism (i.e., one which does not
fix a finite subset of $V\!X$), then there are strips of arbitrary size.
In \S \ref{sec:ends}, the Pumping Lemmas for regular, resp. context-free languages are then used 
to show the following. If $X$ is quasi-transitive and contains a strip of size 2, then
$L_{SAW}$ cannot be regular, and if it contains a strip of size 3, then
$L_{SAW}$ cannot be context-free.  

Thus, we are left with considering graphs whose ends have size at most 2.
In \S \ref{sec:cf} we first consider the case when all ends have size 1.
Then the cut-vertex tree decomposition of $X$ has finite blocks, and we derive
that $L_{\SAW}$ is regular. If all ends have size $2$, 
then we use the 3-block tree decomposition of $X$ of {\sc Droms, Servatius and
Servatius}~\cite{DSS} to construct an unambiguous context-free grammar for 
$L_{\SAW}$. (Alternatively, one might use the vertex cuts of {\sc Dunwoody
and Kr\"on}~\cite{DK}.) To conclude, if $X$ contains ends of both sizes $2$ and $1$, 
one can combine  $2$-connectedness of the (possibly infinite) blocks of the cut-vertex
tree decomposition with the method of the preceding case (ends of size 1) to get
context-freeness.

In the final \S \ref{sec:final}, we start with a discussion of implications and future work, recall some context-free examples, including one of {\sc Lindorfer}~\cite{Li}, and provide an additional more detailed example, where the SAW-generating function is algebraic over $\mathbb{Q}$, but not rational.

\section{Basic background}\label{sec:basics}

\subsection{Ends and automorphisms of graphs\nopunct}
$\,$

\smallskip

Recall that for our language-theoretic approach, it is convenient to consider the
edge set $E\!X$ of our (locally finite, connected, infinite) graph $X$ as being directed 
and with an involution which inverts the 
orientation. When speaking about ends, it is sufficient to identify each pair of oppositely
oriented edges with the same endpoints with one non-oriented edge. 
We shall frequently switch back and forward between these two viewpoints. 
The standard graph distance in a connected graph $X$ is denoted $d_X(\cdot,\cdot)$.
Walks will also be written in terms of vertices instead of edges,
so that $\pi=[x_0\,,x_1\,,\dots, x_n]$ with $x_i \in V\!X$ is the same as 
$[e_1\,,\dots, e_n]$, where $e_i = [x_{i-1}\,,x_i] \in E\!X$. ``Path'' is a synonym for 
``self-avoiding walk''.

The space
of ends of a connected graph was introduced in Graph Theory by 
{\sc Halin}~\cite{Ha1}, and -- without graph terminology -- earlier by
{\sc Freudenthal}~\cite{Fr}.

If $K \subset V\!X$, then $X\setminus K$ is the subgraph obtained from $X$ by removing $K$ 
and all edges incident to vertices in $K$. If removing $K$ disconnects $X$ then
$K$ is called a \emph{separating set}. In this case, if $K = \{ x\}$ then $x$
is called a cut-vertex. A \emph{ray} is a subgraph of $X$ spanned by a sequence
$\ray=[x_0\,,x_1\,,x_2\,,\dots]$  of distinct vertices such that 
$[x_{i-1}\,,x_i] \in E\!X$ for all $i$. 
A \emph{double ray} is defined analogously.
Two rays are called \emph{equivalent,} if for any finite $K \subset V\!X$, 
both end up in the same component of $X \setminus K$, i.e., all but finitely many
of their  vertices are contained in that component. 
An \emph{end} is an equivalence class of rays. If $\omega$ is an end and
$K \subset X$ is finite, then we write $C(K,\omega)=C_X(K,\omega)$ 
for the unique component of $X \setminus K$ in which all representing rays of $\omega$
end up, and we say that $\omega$ belongs to that component. 

A \emph{defining sequence} for an end $\omega$ consists of a sequence
$(K_n)_{n \ge 0}$ of finite subsets of $V\!X$ such that
$C(K_{n-1}\,,\omega) \supset K_n \cup C(K_n\,,\omega)$ for each $n$.
If there is a finite number $m$ such that $|K_n|=m$ for all $n$,
then the end is called \emph{thin,} and the minimal such $m = m(\omega)$ 
is called
the \emph{size} of $\omega$. Otherwise, the end is called \emph{thick}
with size $+\infty\,$. It is a consequence of Menger's theorem that
an end of size $m \le \infty$ contains (as an equivalence class) 
$m$ disjoint rays, and $m$ is maximal with respect to this property.

The space $\Omega$ of all ends is the boundary of a compactification of
$X$: the topology on $\wh X = X \cup \Omega$ is discrete on $X$.
In the above notation, let $\wh C(K,\omega)$ be the union of $C(K,\omega)$ with the set of all 
ends belonging to that component. Then for any defining sequence $(K_n)_{n \ge 0}$
of $\omega$, the family $\wh C(K_n,\omega)$ ($n \ge 0$) is a neighbourhood
base of $\omega$. The vertex set is open and dense in $\wh X$.

The \emph{automorphism group} $\AUT(X)$ consists of all permutations of $V\!X$
which preserve neighbourhood. A subgroup $\Gamma \le \AUT(X)$ is said
to act quasi-transitively if there are only finitely many orbits 
$\Gamma x = \{\gamma x : \gamma \in \Gamma \}$ ($x \in V\!X$), 
and transitively, if  $\Gamma x = V\!X$ for some (every) $x \in V\!X$.
In this case, the graph itself is called (quasi-)transitive.
It is well-known that an infinite, locally finite, connected graph
which is quasi-transitive has one, two or infinitely many ends, see
\cite{Fr}. If it has one end, then this end is thick. If it has two 
ends, then both are thin and have the same size (this situation will be
important in \S \ref{sec:ladders}). If it has infinitely
many ends, then there are thin ends. See e.g. \cite{Ha2}.

Now we return to the setting where the oriented edges of $X$ are labelled
by a finite alphabet $\Si$. We always assume that the labelling 
$E\!X \ni e \mapsto \ell(e) \in \Si$ is deterministic. The most typical example
arises when $\Gamma$ is a finitely generated group and $S$ is a finite,
symmetric set of generators. The \emph{Cayley graph} $X(\Gamma,S)$ of $\Gamma$ with respect 
to $S$ has 
vertex set $\Gamma$. We choose $\Si = S$, and 
for each $x \in \Gamma$ and $s \in S$ there is an oriented edge
from $x$ to $xs$ with label $s$. The group $\Gamma$ acts transitively 
by multiplication from the left. More generally, we have the following.

\begin{lem}\label{lem:fingen}
The group $\Gamma = \AUT(X,\ell)$ of all graph automorphisms of the connected graph
$X$ which preserve the edge labels acts fixed-point freely: if $\gamma \in \Gamma$
and $\gamma x = x$ for some $x \in V\!X$ then $\gamma = 1_{\Gamma}\,$, the unit element
of $\Gamma$.

In particular, if $\Gamma$ acts quasi-transitively then it is finitely generated. 
\end{lem}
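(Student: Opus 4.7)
The first claim, that $\Gamma=\AUT(X,\ell)$ acts freely, is essentially a direct consequence of determinism together with connectedness. Assume $\gamma\in\Gamma$ fixes some vertex $x$. I would prove by induction on $n$ that $\gamma$ fixes every vertex $y$ with $d_X(x,y)\le n$. Given $\gamma y=y$ and $z$ adjacent to $y$, let $e$ be the oriented edge from $y$ to $z$. Since $\gamma$ preserves $\ell$, the image $\gamma e$ is an oriented edge starting at $\gamma y = y$ and carrying the label $\ell(e)$; determinism then forces $\gamma e=e$, hence $\gamma z=z$. Connectedness of $X$ yields $\gamma=1_\Gamma$.

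The second claim uses the first: freeness gives an injection $\Gamma\hookrightarrow V\!X$, $\gamma\mapsto\gamma o$, so every ball in $\Gamma$ (with respect to the metric $d_X(o,\gamma o)$) is finite. I would pick orbit representatives $p_1=o,p_2,\dots,p_m$ and set $D=\max_i d_X(o,p_i)$. Define
\[
S=\{\gamma\in\Gamma : d_X(o,\gamma o)\le 2D+1\}.
\]
Local finiteness of $X$ makes the ball $B_{2D+1}(o)$ finite, and by the injectivity above $|S|\le |B_{2D+1}(o)|<\infty$.

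To see that $S$ generates $\Gamma$, take any $\alpha\in\Gamma$ and join $o$ to $\alpha o$ by a walk $o=x_0,x_1,\dots,x_n=\alpha o$ in $X$. Each $x_i$ lies in some orbit, so write $x_i=\gamma_i p_{j_i}$ with $\gamma_i\in\Gamma$ and $j_i\in\{1,\dots,m\}$, choosing $\gamma_0=1_\Gamma$ and $\gamma_n=\alpha$. By the triangle inequality,
\[
d_X(o,\gamma_{i-1}^{-1}\gamma_i o)=d_X(\gamma_{i-1}o,\gamma_i o)\le d_X(\gamma_{i-1}o,x_{i-1})+d_X(x_{i-1},x_i)+d_X(x_i,\gamma_i o)\le 2D+1,
\]
so $\gamma_{i-1}^{-1}\gamma_i\in S$. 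The telescoping product $\alpha=\prod_{i=1}^n(\gamma_{i-1}^{-1}\gamma_i)$ then expresses $\alpha$ in terms of $S$.

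Neither part presents a real obstacle; the argument is standard. The only subtlety worth flagging is that generation requires enough slack in the radius to ``jump'' between orbit representatives as one moves along the walk from $o$ to $\alpha o$, which is why the constant $2D+1$ rather than $D$ appears in the definition of $S$. The deterministic-labelling hypothesis enters only in the first part; the second is a Schwarz--Milnor-type argument that uses only freeness of the action, local finiteness, and the finite number of orbits.
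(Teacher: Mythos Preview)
Your proof is correct. The first part (freeness from determinism plus connectedness) is identical to the paper's argument.

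For the second part, you take a slightly different route. The paper forms the auxiliary graph $X^{2D+1}$ (same vertex set as $X$, with edges between vertices at $d_X$-distance at most $2D+1$), observes that each orbit $\Gamma o$ induces a connected, locally finite subgraph on which $\Gamma$ acts transitively and freely, and then invokes Sabidussi's theorem that such a graph is a Cayley graph of $\Gamma$; finite generation follows from local finiteness. You instead give a direct \v{S}varc--Milnor-type argument: define $S=\{\gamma:d_X(o,\gamma o)\le 2D+1\}$, check it is finite via the injection $\gamma\mapsto\gamma o$, and factor an arbitrary $\alpha$ as a telescoping product of elements of $S$ by tracking orbit representatives along a walk from $o$ to $\alpha o$. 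The underlying geometry is the same (the constant $2D+1$ plays the same role in both), but your version is more self-contained since it avoids the external citation, while the paper's formulation has the advantage of exhibiting $\Gamma o$ explicitly as a Cayley graph embedded in $X^{2D+1}$, a fact the paper reuses later (Remark~\ref{rmk:qi} and the proof of Theorem~\ref{thm:main-1}).
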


\begin{proof} Suppose $\gamma x = x$. Since the labelling is deterministic,
 $\gamma y = y$ for all neighbours of $x$. By connectedness of $X$, we must have
$\gamma = 1_G\,$. If $\Gamma$ acts quasi-transitively then let $D$ be the diameter
of the (finite, connected) factor graph $\Gamma \backslash X$. Given $x, y \in V\!X$,
we have $d_X(y,\Gamma x) \le D$. Now consider the new graph $X^{2D+1}$
with the same vertex set $V\!X$, where two vertices $x,y$ are connected by a
non-oriented edge whenever $1 \le d_X(x,y) \le 2D+1$. In $X^{2D+1}$,
each orbit $\Gamma x$ induces a connected, locally finite subgraph on which 
$\Gamma$ acts transitively and fixed-point freely. Therefore that subgraph is 
a Cayley graph of $\Gamma$, and $\Gamma$ is
finitely generated, see e.g. the old note by {\sc Sabidussi}~\cite{Sa}.
\end{proof}

Even if our main interest is in Cayley graphs, we need the quasi-transitive
case throughout our proofs.

At this point, we also recall the notion of a \emph{quasi-isometry} between two
metric spaces $(X, d_X)$ and $(Y, d_Y)$. This is a mapping $\varphi: X \to Y$
such that there are constants $A > 0$, $B, B' \ge 0$ such that for all
$x_1\,,x_2 \in X$ and $y \in Y$,
\begin{equation}\label{eq:qi}
A^{-1}d_X(x_1\,,x_2) - B \le d_Y(\varphi x_1\,,\varphi x_2) \le  A\,d_X(x_1\,,x_2) + B
\AND d_Y(y, \varphi X) \le B'.
\end{equation}
Every quasi-isometry $\varphi$ has a \emph{quasi-inverse} $\psi: Y \to X$,
a quasi-isometry such that $\psi\, \varphi$ and  $\varphi\, \psi$
are at bounded distance from the respective identity mappings.

By a quasi-isometry between two connected graphs, we mean a quasi-isometry between
the vertex sets, equipped with the respective standard graph metrics.

\begin{rmk}\label{rmk:qi}
For any two Cayley graphs of the same finitely generated group $\Gamma$ with respect to 
two different finite, symmetric sets of generators, the identity mapping is a 
quasi-isometry with $B=B'=0$ in \eqref{eq:qi}, i.e., the mapping is \emph{bi-Lipschitz}. 
In the situation of Lemma \ref{lem:fingen} and its proof, the identity mapping on
any orbit $\Gamma x$ is a quasi-isometry from the Cayley graph of $\Gamma$ 
(given as the subgraph of $X^{2D + 1}$ induced by that orbit) to the original graph $X$.
Indeed, this is a quasi-surjective bi-Lipschitz embedding, i.e., $B=0$ in~\eqref{eq:qi}.
\end{rmk}

In the following Lemma, the subscripts $X$ and $Y$ refer to the respective graphs,
their metrics, and so on.

\begin{lem}\label{lem:qi}
Let $X$ and $Y$ be two connected graphs with bounded vertex degrees.
If $\varphi: X \to Y$ is a quasi-isometry, then it extends to a continuous
mapping $\wh X \to \wh Y$ which restricts to a homeomorphism between
the spaces of ends of $X$ and $Y$. 

There is an increasing function $\theta=\theta_{\varphi}: \N \to \N$ with the following property:
if $\omega \in \Omega_X$ and $K \subset X$ with $\diam_X(K) = k$ then there is 
$\,\ol{\!K} \subset Y$ with 
$$
\varphi K \subset \,\ol{\!K}\,,\quad \diam_Y(\,\ol{\!K}) \le \theta(k), \AND
\varphi C_X(K,\omega) \supset C_Y(\,\ol{\!K},\varphi\omega) \cap \varphi X.
$$
In particular, if $\omega$ has a defining sequence $(K_n)$ with $\diam_X(K_n) \le k < \infty$
then $\varphi\omega$ has a defining sequence $\,\ol{\!K}_n$ with $\diam_Y(\,\ol{\!K}_n) 
\le \theta(k)\,$, and if $\omega$ is a thick end, then so is $\varphi \omega$.
\end{lem}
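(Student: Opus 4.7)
The plan is to construct the extension of $\varphi$ to the end boundary using images of rays and a quasi-inverse, establish the quantitative cut-set inclusion by thickening $\varphi(K)$, and then derive the two ``in particular'' statements as consequences.

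Fix a quasi-inverse $\psi:Y\to X$ and a constant $C$ with $d_X(x,\psi\varphi x)\le C$ and $d_Y(y,\varphi\psi y)\le C$ (both exist from \eqref{eq:qi}). For a ray $\rho=(x_n)$ representing $\omega$, the image sequence $(\varphi x_n)$ has consecutive jumps at most $A+B$ and escapes every finite subset of $Y$: if some subsequence stayed in a ball $B_Y(y,R)$, applying $\psi$ would confine a subsequence of $(x_n)$ to a finite ball in $X$, contradicting $x_n\to\infty$ in a locally finite graph. Filling consecutive jumps by $Y$-geodesics produces an escaping locally finite walk, and any two such walks (from the same ray, or from two rays in $\omega$ joined by bounded paths outside any finite $K\subset X$) stay at bounded Hausdorff distance, so they accumulate at a common end of $Y$; call this end $\varphi\omega$. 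The symmetric construction for $\psi$ produces the inverse on ends, and continuity on $\wh X$ follows from the neighbourhood-basis description of ends, giving the claimed homeomorphism.

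For the quantitative cut-set estimate, given $K\subset X$ with $\diam_X K=k$, I would set
\[
\,\ol{\!K} \;=\; \{\, y \in Y : d_Y(y,\varphi K)\le r \,\}
\]
for a constant $r=r(A,B,B',C)$ chosen large enough that for every $y\notin\,\ol{\!K}$, both $\psi y$ and every preimage $x\in\varphi^{-1}(y)$ lie at $X$-distance strictly greater than $A+B+C$ from $K$; this follows from \eqref{eq:qi} plus the triangle inequality. Then $\diam_Y\,\ol{\!K}\le Ak+B+2r=:\theta(k)$ and $\varphi K\subset\,\ol{\!K}$. To show $\varphi C_X(K,\omega)\supset C_Y(\,\ol{\!K},\varphi\omega)\cap\varphi X$, take $y=\varphi x_0$ in the right-hand side and a path in $Y\setminus\,\ol{\!K}$ from $y$ to the tail of a ray representing $\varphi\omega$. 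Applying $\psi$ vertexwise yields a sequence in $X$ with consecutive jumps at most $A+B$, every vertex at $X$-distance $>A+B+C$ from $K$; filling by $X$-geodesics produces a walk in $X\setminus K$, since the triangle inequality forbids these short geodesics from hitting $K$. Prepending the length-$\le C$ path from $x_0$ to $\psi y$ and using that $\psi$ realises the inverse on ends, one obtains a walk in $X\setminus K$ from $x_0$ to the tail of a ray in $\omega$, placing $x_0$ in $C_X(K,\omega)$.

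The first ``in particular'' follows by applying the quantitative estimate to each $K_n$ of a bounded-diameter defining sequence of $\omega$: after passing to a subsequence, the resulting sets $\,\ol{\!K}_n$ form a defining sequence of $\varphi\omega$, since for any finite $L\subset Y$ the sets $K_n$ eventually separate $\psi L$ from $\omega$ in $X$, which together with quasi-density $d_Y(\cdot,\varphi X)\le B'$ forces $L$ to be eventually disjoint from $C_Y(\,\ol{\!K}_n,\varphi\omega)$; the diameter bound $\theta(k)$ persists. For the thick-to-thick claim, I would argue by contrapositive via the quasi-inverse: assuming $\varphi\omega$ thin of size $m$, one first shows, using Halin's structure theorem for thin ends, that $\varphi\omega$ admits a defining sequence of $m$-vertex sets with uniformly bounded $Y$-diameter. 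Applying the quantitative estimate to $\psi$ then yields a bounded-diameter defining sequence of $\omega$ in $X$, which by bounded degree of $X$ has uniformly bounded cardinality, forcing $\omega$ to be thin. The main obstacle will be this intermediate step, namely upgrading a bounded-cardinality defining sequence of a thin end to a bounded-diameter one: one must choose the $m$ disjoint rays of the thin end so that their consecutive cross-sections remain at bounded pairwise distance, exploiting that they all represent the same end in a graph of bounded degree. Apart from this end-theoretic input, the rest of the proof is essentially careful bookkeeping of the quasi-isometry constants through the cut-set inclusion.
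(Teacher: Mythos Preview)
The paper does not give a proof here; it defers to \cite[Lem.~21.3 and 21.4]{Wbook}. Your sketch of the end-homeomorphism, the thickened cut-set inclusion, and the first ``in particular'' is correct and essentially the standard argument.

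The thick-to-thick part, however, has a genuine gap. Your intermediate step --- that a thin end of size $m$ in a bounded-degree graph admits a defining sequence of $m$-vertex sets with uniformly bounded diameter --- is false in general. A counterexample is the half-ladder on $\{0,1\}\times\N$ with rungs only at heights $0,1,3,7,\dots,2^k-1,\dots$: it has maximal degree $3$ and a single end of size $2$, yet any set separating the origin from the end must meet both rails, and between consecutive rungs the two rails are at arbitrarily large distance from each other; hence no bounded-diameter defining sequence exists. Your suggestion to ``choose the $m$ disjoint rays so that their consecutive cross-sections remain at bounded pairwise distance'' cannot succeed here: representing the same end guarantees connectability outside finite sets, not uniform proximity.

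The fix is to drop diameter and work with cardinality directly --- and your own thickening already gives this. Your set $\ol{\!K}=\{y\in V\!Y:d_Y(y,\varphi K)\le r\}$ has cardinality bounded by $|K|$ times a constant depending only on $r$ and the degree bound of $Y$, regardless of $\diam_X K$. So, given a defining sequence $(L_n)$ of $\varphi\omega$ with $|L_n|=m$, set $K_n=\{x\in V\!X:d_X(x,\psi L_n)\le r\}$; then $|K_n|$ is bounded by a constant depending only on $m$, $r$, and the degree bound of $X$, independently of $\diam_Y L_n$. Your filled-path transfer argument (applied verbatim with $\psi$ in place of $\varphi$) shows that any ray in $X\setminus K_n$ going to $\omega$ yields, after applying $\varphi$ and filling, a path in $Y\setminus L_n$ going to $\varphi\omega$; hence for every vertex $x$ one has $x\notin C_X(K_n,\omega)$ for all large $n$, and $\omega$ is thin of size at most the common bound on $|K_n|$.
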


This follows with some small additional effort from \cite[Lem. 21.3 and 21.4]{Wbook} and
their proofs. Using \cite[Thm. 4.4]{ThWo} and its proof for the case where both $X$ and $Y$ are quasi-transitive, we obtain the following.

\begin{cor}\label{cor:qi}
Let $X$ and $Y$ be connected, quasi-transitive graphs and $\varphi: X \to Y$ be a quasi-isometry. Then there is an increasing function $\eta=\eta_{\varphi}: \N \to \N$ such that any end $\omega \in \Omega_X$ of size $k$ maps to an end $\varphi \omega \in \Omega_Y$ of size at most $\eta(k)$.
\end{cor}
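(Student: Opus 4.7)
The plan is to combine Lemma \ref{lem:qi} with a structural fact about thin ends in quasi-transitive graphs taken from \cite[Thm.~4.4]{ThWo}, which the statement explicitly invokes.

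First I would invoke the following ingredient from \cite{ThWo}: in a connected quasi-transitive graph $X$, an end $\omega$ of finite size $k$ admits a defining sequence $(K_n)_{n\ge 0}$ in which $|K_n|=k$ \emph{and} $\diam_X(K_n)\le D_X(k)$, where $D_X:\N\to\N$ is an increasing function depending only on $X$. The reason this works in the quasi-transitive setting is that the minimal separators realising the size of a thin end fall into only finitely many $\AUT(X)$-orbits at each size level, so one can pick the $K_n$ within a single orbit of a bounded-diameter separator of size $k$ and then push this separator further and further out by automorphisms fixing $\omega$.

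Next I would transport this defining sequence through the quasi-isometry. Lemma \ref{lem:qi} directly gives a defining sequence $(\,\ol{\!K}_n)$ for $\varphi\omega$ with
$$
\diam_Y(\,\ol{\!K}_n)\;\le\;\theta_\varphi\!\bigl(\diam_X(K_n)\bigr)\;\le\;\theta_\varphi\!\bigl(D_X(k)\bigr).
$$
Because $Y$ is quasi-transitive it has a uniform upper bound $\Delta_Y$ on its vertex degrees, and hence any subset of $V\!Y$ of diameter $r$ has at most $M_Y(r):=1+\Delta_Y+\Delta_Y(\Delta_Y-1)+\dots+\Delta_Y(\Delta_Y-1)^{r-1}$ vertices. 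Consequently $|\,\ol{\!K}_n|\le M_Y\bigl(\theta_\varphi(D_X(k))\bigr)$, and since $(\,\ol{\!K}_n)$ is a defining sequence for $\varphi\omega$, the size of $\varphi\omega$ is at most this last quantity. I would then set
$$
\eta_\varphi(k)\;:=\;\max_{j\le k} M_Y\!\bigl(\theta_\varphi(D_X(j))\bigr),
$$
which is automatically increasing and finishes the proof.

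The main obstacle is really the first step: everything else is a routine application of Lemma \ref{lem:qi} together with the bounded-geometry fact that balls in $Y$ grow at most exponentially in their radius. Extracting a defining sequence of separators of bounded diameter is precisely what \cite[Thm.~4.4]{ThWo} and its proof provide in the quasi-transitive case; without quasi-transitivity of $X$ this step would fail (a thin end could be defined only by separators of unbounded diameter), and without quasi-transitivity of $Y$ the final step, bounding $|\,\ol{\!K}_n|$ by its diameter, would also break down — showing why both hypotheses are used.
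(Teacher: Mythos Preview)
Your proposal is correct and spells out exactly the argument the paper leaves implicit: quasi-transitivity of $X$ (via \cite[Thm.~4.4]{ThWo}) yields a defining sequence of uniformly bounded diameter, Lemma~\ref{lem:qi} transports it to $Y$, and quasi-transitivity of $Y$ (hence bounded degree) converts the diameter bound into a cardinality bound. One quibble on your heuristic justification of the first step: you need not, and in general cannot, find automorphisms fixing the specific end $\omega$ --- the correct reason the $K_n$ have bounded diameter is simply that tight $k$-cuts in a quasi-transitive graph fall into finitely many orbits (via \cite[Prop.~4.2]{ThWo}) and hence have uniformly bounded diameter --- but since you attribute the claim to \cite{ThWo} anyway, this does not affect the validity of your argument.
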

 
\subsection{Regular and context-free languages\nopunct}
$\,$

\smallskip

As mentioned above, \cite{Ha} is an optimal source on context-free languages.
We recall from the introduction that for any alphabet (set) $\Si$,
we denote by
$$
\Si^* = \{ w=a_1a_2\cdots a_n : n \ge 0,\; a_i \in \Si \}
$$ 
the set of all words $w$ over $\Si$. Here, $|w|=n$ is the \emph{length}
of $w$, and when $n=0$, this is the \emph{empty word} $\epsilon$.

A \emph{context-free grammar} is a quadruple $\Ccal = (\V,\Si,\Pb,S)$, 
where  $\V$ is the finite set of \emph{variables} (with 
$\V\cap \Si = \emptyset$), the variable $S$ is the \emph{start symbol,} 
and $\Pb \subset \V \times (\V \cup \Si)^*$ is a 
finite set of \emph{production rules.} We write $A \vdash u$ 
if $(A,u) \in \Pb$.  For $v, w \in (\V \cup \Si)^*$, a \emph{rightmost 
derivation step} has the form $v \then w$, where $v = v_1Av_2$ and 
$w=v_1uv_2$ with $u, v_1 \in (\V \cup \Si)^*$, $v_2 \in \Si^*$ and $A \vdash u$. 
A \emph{rightmost derivation} is a finite sequence
 $v=w_0\,, w_1\,, \dots,w_k=w \in (\V \cup \Si)^*$ such 
that $w_{i-1} \then w_i\,$; we then write $v \thens w$. 
Each $A \in \V$ generates the language  
$L_A = \{ w \in \Si^* : A \thens w \}$. The \emph{language generated by} 
$\Ccal$ is $L(\Ccal) =L_S$. The grammar is called \emph{unambiguous,} if
every $w \in  L(\Ccal)$ has a unique rightmost derivation.

A grammar and the language which it generates are called \emph{linear,}
if each production is of the form 
$$
A \vdash uBv \quad\text{or}\quad A \vdash u,\quad \text{where}
\quad A, B \in \V,\; u,v \in \Si^*.
$$
and if in that situation one always has $v = \epsilon$, then
the grammar and the language are called \emph{right-linear} or \emph{regular}.
In this case, the language is accepted by a (deterministic)
finite state automaton, see \cite{Ha}.

Typical tools to show that a language is \emph{not} regular, resp. context-free, 
are the well known Pumping Lemmas. 
\begin{lem}[Pumping Lemma for regular languages] \label{lem:pumplemreg}
Let $L$ be a regular language. Then there is a pumping length $p>0$ such 
that every $z \in L$ with $|z| \geq p$ can be written as $z=u\0 v\0\tilde u$, where 
$|v\0 \tilde u| \leq p$, $|v| \geq 1$ and $u\0 v^n\tilde u \in L$ for all $n \geq 0$.
\end{lem}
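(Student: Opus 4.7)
The plan is to use the standard DFA characterization of regular languages and a pigeonhole argument, modified slightly from the usual presentation so that the pumped factor lies at the \emph{end} of $z$ rather than at its beginning (since the statement bounds $|v\,\tilde u|$ rather than $|u\0 v|$).

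First, I would invoke the fact that a regular language $L$ is recognized by a deterministic finite automaton $\mathcal{A}=(Q,\Si,\delta,q_0,F)$, and set the pumping length to be $p = |Q|$. Given $z = a_1 a_2 \cdots a_n \in L$ with $n \geq p$, consider the sequence of states $q_0, q_1, \ldots, q_n$ visited by $\mathcal{A}$ when reading $z$, where $q_i = \delta(q_0, a_1 \cdots a_i)$ and $q_n \in F$.

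Next, I would focus on the last $p+1$ of these states, namely $q_{n-p}, q_{n-p+1}, \ldots, q_n$. Since there are $p+1$ entries but only $p$ states in $Q$, the pigeonhole principle yields indices $n-p \leq i < j \leq n$ with $q_i = q_j$. Setting $u = a_1 \cdots a_i$, $v = a_{i+1}\cdots a_j$, and $\tilde u = a_{j+1}\cdots a_n$ gives a factorization $z = u\0 v\0 \tilde u$ with $|v| = j-i \geq 1$ and $|v\0 \tilde u| = n-i \leq p$, as required.

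Finally, since reading $v$ carries $\mathcal{A}$ from $q_i$ back to itself, iterating this loop $n \in \N$ times (including $n=0$, which cuts the loop out) still produces a computation that ends in $q_j = q_i$ and is then completed to $q_n \in F$ by reading $\tilde u$. Hence $u v^n \tilde u \in L$ for every $n \geq 0$. The only mild subtlety — essentially the sole ``obstacle'' — is to apply pigeonhole to the last $p+1$ states rather than the first $p+1$, which accommodates the bound $|v\,\tilde u|\le p$ instead of the more familiar $|u\0 v|\le p$; this is equivalent to the standard form via passing to the reverse language, which is regular whenever $L$ is.
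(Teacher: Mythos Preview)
Your argument is correct: the DFA characterization together with a pigeonhole argument on the last $p+1$ states visited yields exactly the factorization with $|v\,\tilde u|\le p$ and $|v|\ge 1$, and the loop at $q_i=q_j$ gives $u v^k \tilde u\in L$ for all $k\ge 0$. (One small stylistic remark: you use the letter $n$ both for $|z|$ and for the pumping exponent; renaming one of them would avoid confusion.)

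There is nothing to compare against: the paper states this lemma as a classical tool and does not supply a proof. Your proof is the standard one, with the minor twist of applying pigeonhole to the tail of the state sequence so as to match the suffix bound $|v\,\tilde u|\le p$ used later in the paper; this is entirely appropriate here.
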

\begin{lem}[Pumping Lemma for context-free languages] \label{lem:pumplemcf}
Let $L$ be a context-free language. Then there is a pumping length 
$p>0$ such that every $z \in L$ with $|z| \geq p$ can be written as 
$z=u\0v\0w\0\tilde v\0 \tilde u$, where $|v\0w\0\tilde v| \leq p$, $|v\0\tilde v| \geq 1$ and 
$u\0v^n\0w\0\tilde v^n\0\tilde u \in L$  for all $n \geq 0$.
\end{lem}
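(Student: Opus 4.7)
The plan is to derive the lemma from the structure of derivation trees of a grammar in Chomsky normal form (CNF). First I would invoke the standard normalisation theorem (cf.\ \cite{Ha}) to replace a context-free grammar generating $L$ by an equivalent grammar $\Ccal'=(\V',\Si,\Pb',S)$ in CNF, whose productions all have the form $A\vdash BC$ with $B,C\in\V'$ or $A\vdash a$ with $a\in\Si$; since the lemma only concerns words of length $\ge p$, adjoining the rule $S\vdash\epsilon$ to recover $\epsilon\in L$ (if needed) is harmless.

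Set $k=|\V'|$ and $p=2^{k+1}$. The variable part of any derivation tree of $\Ccal'$ is a proper binary tree, so if the yield $z$ satisfies $|z|\ge p$ then the tree has a root-to-leaf path through at least $k+2$ variable-labelled nodes. Among the lowest $k+1$ of these, pigeonhole produces two nodes carrying the same variable $A$; choose the two lowest such occurrences, at nodes $\alpha$ strictly above $\beta$.

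Let $w$ be the yield of the subtree rooted at $\beta$, write the yield of the subtree rooted at $\alpha$ as $v\0 w\0 \tilde v$ with $v,\tilde v$ the material to the left and right of the $\beta$-subtree, and let $u,\tilde u$ be the contributions of the rest of the tree, so $z=u\0 v\0 w\0 \tilde v\0 \tilde u$. Because $\alpha$ sits at most $k+1$ edges above the bottom leaf of the chosen longest path, the $\alpha$-subtree has height $\le k+1$ and hence yield of length $\le 2^{k+1}=p$, giving $|v\0 w\0 \tilde v|\le p$. The constraint $|v\0 \tilde v|\ge 1$ is forced because in CNF the node $\alpha$ has two children, at most one of which lies on the path descending to $\beta$; the other child contributes a non-empty word to $v$ or $\tilde v$. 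Finally, from the derivations $S\thens u\0 A\0 \tilde u$, $A\thens v\0 A\0 \tilde v$ and $A\thens w$ read off the three parts of the tree, iterating the middle one $n$ times produces $u\0 v^n\0 w\0 \tilde v^n\0 \tilde u\in L$ for every $n\ge 0$.

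The main obstacle is arranging $|v\0 \tilde v|\ge 1$: it relies on performing the CNF reduction so that no variable other than possibly $S$ derives $\epsilon$, and on $\beta$ being \emph{strictly} below $\alpha$, both of which are compatible with the standard normalisation. Once the derivation-tree framework is in place, everything else is bookkeeping; the proof of the regular Pumping Lemma (Lemma~\ref{lem:pumplemreg}) follows the same pattern via finite-state automata in place of CNF parse trees.
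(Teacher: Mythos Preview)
The paper does not prove Lemma~\ref{lem:pumplemcf}; it is quoted there as a standard background result (``the well known Pumping Lemmas''), so there is no in-paper argument to compare against. Your proposal is the classical textbook proof via Chomsky normal form and parse trees, and it is correct.

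One small point worth tightening: when you pick ``a root-to-leaf path through at least $k+2$ variable-labelled nodes'' and later refer to it as ``the chosen longest path'', you implicitly need that at every branching the path descends into the subtree of greater (or equal) height. Without that choice, the subtree rooted at $\alpha$ could in principle have a longer branch off to the side, and the bound $|v\0w\0\tilde v|\le p$ would not follow. This is the standard refinement and you clearly have it in mind; just make the choice of path explicit. With that adjustment your height bound actually yields $|v\0w\0\tilde v|\le 2^{k}<p$, so your constant $p=2^{k+1}$ works with a factor of~$2$ to spare.
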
 

\section{Strips in locally finite graphs}\label{sec:ladders}

The action of the automorphism group of a locally finite, connected graph extends 
in an obvious way to the space of ends. The automorphisms can be 
classified into 3 types:
\begin{itemize}
 \item $\gamma \in \AUT(X)$ is \emph{elliptic,} if it fixes a finite subset of $V\!X$,
 \item a non-elliptic $\gamma \in \AUT(X)$ is \emph{parabolic,} if it fixes a unique end, and
 \item it is \emph{hyperbolic,} if it fixes each of a unique pair of ends.  
\end{itemize}

While this terminology was not used by {\sc Halin}~\cite{Ha2}, he showed that for non-elliptic automorphism $\gamma$ and every $x\in V\!X$ the sequence $[x\,,\gamma x\,, \gamma^2x\,, \dots]$ uniquely defines an end of $X$ which is called the \emph{direction} of $\gamma$, denoted by $D(\gamma)$ and fixed by $\gamma$. The following theorem serves as one of the main pillars for our results.

\begin{thm}[{{\sc Halin}~\cite[Thm. 9]{Ha2}}] \label{thm:strip}
Let $\gamma$ be a non-elliptic automorphism acting on the locally finite connected graph $X$. Then the following holds:
\begin{enumerate}
 \item[(a)] $D(\gamma)$ and $D(\gamma^{-1})$ have the same size $m$.
\item[(b)] $D(\gamma) \neq D(\gamma^{-1})$ (i.e. $\gamma$ is hyperbolic) if and only if $m<\infty$.
\item[(c)] There are $m$ disjoint double rays $\rho_1\,, \rho_2\,, \dots$ which are invariant under $\gamma$.
\item[(d)] If $\gamma$ is hyperbolic then there are a set $K \subset V\!S$ with $|K|=m$ and an
integer $k$ such that $(\gamma^{kn}K)_{n \ge 0}$ and $(\gamma^{-kn}K)_{n \ge 0}$ are defining sequences for $D(\gamma)$ and $D(\gamma^{-1})$ respectively. Each $\rho_i$ $(i \le m)$  meets every $\gamma^{kn}K$ ($n \in \Z$) in precisely one vertex.
\end{enumerate}
\end{thm}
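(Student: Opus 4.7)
The plan is to prove (c) first and deduce (a), (b) and (d) from it by symmetry and a short bookkeeping argument. Throughout, $\gamma$ fixes both $D(\gamma)$ (by construction) and $D(\gamma^{-1})$ (since it is fixed by $\gamma^{-1}$). The elementary setup is to fix $x_0 \in V\!X$ and a path $\pi_0$ from $x_0$ to $\gamma x_0$: because $\gamma$ is non-elliptic, the orbit $\{\gamma^n x_0 : n \in \Z\}$ cannot visit any finite set infinitely often, so concatenating the shifts $\gamma^n \pi_0$ and loop-erasing produces rays representing $D(\gamma)$ forwards and $D(\gamma^{-1})$ backwards.

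I next handle the hyperbolic case. Since $D(\gamma) \ne D(\gamma^{-1})$ are distinct ends, there is a finite set separating them, so in particular both ends have finite size and $m = m(D(\gamma)) < \infty$ by a Menger-theoretic argument. Pick such a separator $K$ of the minimum possible size $m$, and choose $k$ so large that $\gamma^k K \subset C(K, D(\gamma))$ and $K \cap \gamma^k K = \emptyset$; this is possible because $\gamma^n x_0 \to D(\gamma)$. Menger's theorem, applied to the finite region between $K$ and $\gamma^k K$, produces $m$ internally disjoint paths $\sigma_1, \dots, \sigma_m$, and concatenating the iterates $\gamma^{jk}\sigma_i$ for $j \in \Z$ yields $m$ disjoint $\gamma^k$-invariant double rays. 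The forward and backward halves witness $m(D(\gamma)), m(D(\gamma^{-1})) \ge m$, and minimality of $K$ delivers equality; this is (a) and the forward direction of (b), and (d) reads off at once since interior disjointness of the $\sigma_i$ makes each $\rho_i$ meet every $\gamma^{jk}K$ in exactly one vertex.

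In the parabolic case ($D(\gamma) = D(\gamma^{-1}) =: \omega$) I would first establish the reverse direction of (b), namely that $\omega$ is thick: if $\omega$ had finite size $m$, pick a defining sequence $(K_n)$ with $|K_n| = m$ and note that for large $n$ both the forward and backward orbit tails lie in $C(K_n, \omega)$; combining this with the $\gamma$-invariance of $\omega$ would eventually trap a finite $\gamma$-invariant substructure and contradict non-ellipticity. Once $m = \infty$ is in hand, I would run the Menger scheme at every finite level: for each $n$ find a displacement power $k_n$ together with a separator of size at least $n$ inside $\omega$ and extract $n$ disjoint $\gamma^{k_n}$-invariant double rays; then diagonalise using K\"onig's lemma (valid because the graph is locally finite) to obtain a countably infinite disjoint family, and close under the $\gamma$-action, which fixes $\omega$ and hence permutes the family, to deliver the required infinite $\gamma$-invariant collection.

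The main obstacle I expect is the upgrade from $\gamma^k$-invariance to $\gamma$-invariance in part (c): a priori $\gamma$ could permute the double rays nontrivially, and one must reorganise them into $\gamma$-orbits without inflating the count. The crucial input keeping the total cardinality equal to $m$ is that $|K| = m$ caps the number of pairwise disjoint double rays crossing $K$. A secondary subtlety is the parabolic diagonalisation, where the $n$th family must be chosen compatibly with the $(n{+}1)$st; this is the standard K\"onig-type compactness argument that is available in a locally finite graph.
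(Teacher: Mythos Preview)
The paper does not prove this theorem at all; it is quoted from Halin~\cite[Thm.~9]{Ha2} and used as a black box throughout \S\ref{sec:ladders}. So there is no ``paper's own proof'' to compare your plan against, and what follows is simply feedback on the plan itself.

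In the hyperbolic case your Menger-plus-iteration scheme is the right idea and yields (a), (d) and the forward half of (b) essentially as you describe. One slip: the $m$ Menger paths $\sigma_1,\dots,\sigma_m$ from $K$ to $\gamma^kK$ induce a bijection $K\to\gamma^kK$ via their endpoints, and this bijection need not coincide with $\gamma^k$. Consequently, concatenating the translates $\gamma^{jk}\sigma_i$ produces $m$ disjoint double rays that are \emph{permuted} by $\gamma^k$, not fixed individually. You flag the $\gamma^k\!\to\!\gamma$ upgrade as the main obstacle, and it is; but the observation that $|K|=m$ caps the number of disjoint double rays crossing $K$ only controls the cardinality, it does not by itself single out $\gamma$-invariant ones. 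You still need a further step---either pass to a higher power and regroup along cycles of the induced permutation, or (closer to Halin's original approach) build one $\gamma$-invariant double ray directly from a path $x_0\to\gamma x_0$, delete it, and induct on $m$.

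In the parabolic case the plan is too thin to count as a proof. The phrase ``trap a finite $\gamma$-invariant substructure and contradict non-ellipticity'' is exactly the content of the reverse direction of (b) and needs a concrete mechanism; one route is to show that $\gamma$, fixing the thin end $\omega$, must setwise stabilise one of the finitely many tight $m$-cuts meeting a fixed ball (cf.\ \cite[Prop.~4.2]{ThWo}), whence a power of $\gamma$ fixes a vertex. The final diagonalisation to infinitely many disjoint \emph{individually $\gamma$-invariant} double rays also needs more than a bare K\"onig argument: compatibility between level $n$ and level $n+1$ must preserve $\gamma$-invariance of each ray, not just disjointness, and you have not said how the families at different levels are nested.
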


\begin{dfn}\label{def:strip} A locally finite, connected graph $S=(V\!S,ES)$ is called
 a \emph{strip} if it is quasi-transitive and has precisely two ends.
\end{dfn}

The structure of strips is well understood. We collect without proof those basic facts which will be needed below. More about strips can also be found in {\sc Jung and Watkins}~\cite{JW} and {\sc Imrich and Seifter}~\cite{IS1}, \cite{IS2}. 

For a strip $S$ there is some hyperbolic automorphism $\gamma$ fixing each of the two ends $\omega^+$ and $\omega^-$ of $S$. Thus by the previous theorem $\omega^+$ and $\omega^-$ have the same finite size $m$. Moreover it provides 
\begin{itemize}
\item a finite set $K \subset V\!S$ with $|K|=m$, together with
\item an automorphism $\tau \in \AUT(S)$, such that $(\tau^nK)_{n \ge 0}$ and $(\tau^{-n}K)_{n \ge 0}$ are defining sequences for $\omega^+$ and $\omega^-$, respectively and 
\item $m$ disjoint, $\tau$-invariant double rays, each of them intersecting every $\tau^n K$ in precisely one vertex.
\end{itemize}
By replacing $\tau$ with a suitable power $\tau^k$, we can assume that the subgraph $Y$ of $S$ spanned by $K \cup C(K,\omega^+) \setminus C(\tau K,\omega^+)$ is finite and connected. 

In this situation we call $S$ a $\tau$-strip of size $m$. We use the same terminology if $S$ is a 
subgraph of a bigger graph $X$, and $\tau \in \AUT(X)$ is an automorphism whose 
restriction to $S$ has the above properties.

The following lemma refines the well-known argument that in a quasi-transitive graph with more
than one end,  the directions of hyperbolic automorphisms are dense in the space of ends.

\begin{lem}\label{lem:size} Let $X$ be locally finite and connected and $\Gamma \le \AUT(X)$
act quasi-transitively.
If $X$ has a thin end of size $m$ then it contains a $\tau$-strip of size $m$ for some
$\tau \in \Gamma$.
\end{lem}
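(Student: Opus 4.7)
The plan is to produce a hyperbolic element $\tau \in \Gamma$ with $D(\tau) = \omega$ via a pigeonhole argument, and then assemble a $\tau$-strip around the $\tau$-invariant double rays furnished by Theorem~\ref{thm:strip}.

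First I would refine a defining sequence of $\omega$ to one of uniformly bounded diameter. By definition of size there is $(L_n)_{n \ge 0}$ with $|L_n|=m$; using the $m$ disjoint rays representing $\omega$ supplied by Menger's theorem, together with the fact that in a quasi-transitive graph these rays can be arranged to stay within a bounded tubular neighbourhood of one another, one obtains $(K_n)_{n \ge 0}$ with $|K_n|=m$ and $\diam(K_n) \le r$ for a uniform $r$. I would then enrich each $K_n$ with its local component decoration $D_n$, recording for every vertex of $B_{r+1}(K_n) \setminus K_n$ whether it lies in $C(K_n,\omega)$ or not. Local finiteness and quasi-transitivity produce only finitely many $\Gamma$-orbits of such decorated sets, so pigeonhole yields $i<j$ and $\tau \in \Gamma$ with $\tau(K_i,D_i)=(K_j,D_j)$. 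In particular $\tau K_i=K_j$ and $\tau\,C(K_i,\omega)=C(K_j,\omega)$.

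A short induction on $n$, applying $\tau$ to the inductive hypothesis $\tau^n K_i \subset C(\tau^{n-1}K_i,\omega)$ and invoking the direction-preservation identity, then shows $\tau^{n+1}K_i \subset C(\tau^n K_i,\omega)$ for every $n\ge 0$. Hence $(\tau^n K_i)_{n\ge 0}$ is a defining sequence of $m$-sets for $\omega$, so $\tau$ is non-elliptic with $D(\tau)=\omega$, and by Theorem~\ref{thm:strip}(b) it is hyperbolic. Applying Theorem~\ref{thm:strip}(c),(d) and replacing $\tau$ by a sufficiently large power, I obtain $m$ disjoint $\tau$-invariant double rays $\rho_1,\dots,\rho_m$ and a cross section $K$ with $|K|=m$ such that $Y := K \cup \bigl(C(K,\omega)\setminus C(\tau K,\omega)\bigr)$ is finite and connected.

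Finally, setting $S := \bigcup_{n\in\Z} \tau^n Y$ as the induced subgraph of $X$ on this vertex set, $S$ is connected and $\tau$-invariant; $\langle\tau\rangle$ acts on $S$ with at most $|Y|$ orbits, so $S$ is quasi-transitive; removing a finite slab $\tau^{-N}Y\cup\cdots\cup\tau^N Y$ leaves two infinite connected tails, so $S$ has exactly two ends; and the $m$ disjoint double rays inside $S$ together with the cross section $K$ of size $m$ force both ends of $S$ to have size exactly $m$. Thus $S$ is the desired $\tau$-strip of size $m$. The main obstacle is the preliminary reduction to a defining sequence of bounded diameter, without which the pigeonhole argument has no finite orbit count to exploit; a closely related subtlety is making the direction-preserving inductive step genuinely propagate through all iterates of $\tau$, which can be secured by matching a longer block of the defining sequence in the pigeonhole, so that $\tau$ transports consecutive cross sections coherently and the orbit $(\tau^n K_i)$ stays locked onto the tail of $(K_n)$.
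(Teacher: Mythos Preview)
Your overall strategy---pigeonhole on the defining sequence to produce a hyperbolic $\tau$, then assemble the strip---matches the paper's. But the two obstacles you flag at the end are genuine, and neither is actually overcome in your outline.

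The bounded-diameter reduction is the real gap. Your justification, that ``in a quasi-transitive graph these rays can be arranged to stay within a bounded tubular neighbourhood of one another,'' is not a known fact you can simply invoke; it is essentially what the lemma delivers, since the $\tau$-strip is precisely what forces the $m$ rays into a bounded tube. The paper bypasses this entirely. Instead of bounding $\diam(K_n)$, it translates each $K_n$ by some $\gamma_n \in \Gamma$ so that $\gamma_n K_n$ meets a fixed finite set $R$ of orbit representatives, observes that each $\gamma_n K_n$ is a \emph{tight} $m$-cut, and then invokes \cite[Prop.~4.2]{ThWo}: only finitely many tight $m$-cuts pass through any given vertex. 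That gives finitely many pairs $(\gamma_n K_n,\gamma_n C_n)$ outright, with no diameter control needed. Your decoration $D_n$ is then superfluous---the paper just records the component $\gamma_n C_n$ alongside the cut.

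Your second worry, about propagating the $\omega$-direction through iterates of $\tau$, is also real in your formulation, because you insist on $D(\tau)=\omega$. The paper does not: from $(\gamma_i K_i,\gamma_i C_i)=(\gamma_j K_j,\gamma_j C_j)$ and $\tau=\gamma_j^{-1}\gamma_i$ one gets $C_i \supset \tau(K_i\cup C_i)$ directly, and applying $\tau$ repeatedly yields a nested sequence $(\tau^n K_i)_{n\ge 0}$ defining \emph{some} end $D(\tau)$ of size at most $m$. The lower bound comes not from Theorem~\ref{thm:strip}(c) abstractly but from the $m$ disjoint paths between $K_i$ and $K_j=\tau K_i$ already present along the original rays in $\omega$; their $\tau$-translates concatenate into $m$ disjoint $\tau$-invariant double rays. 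So there is no need to chase $\omega$ through the iteration, and your proposed fix of matching longer blocks is unnecessary.
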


\begin{proof} 
Let the end $\omega$ of $X$ have size $m$, take $m$ disjoint rays in $\omega$ and let $(K_n)_{n \ge 0}$
be a defining sequence for $\omega$ such that $|K_n|=m$ for all $n$ and each $K_n$ meets each of the $m$ rays. Fix a finite set $R$ of representatives of the orbits given by the action of $\Gamma$ on $X$. For $n \geq 0$ write $C_n=C(K_n\,,\omega)$ and let $\gamma_n \in \Gamma$ such that $\gamma_n K_n$ contains an element of $R$. Then every $\gamma_n K_n$ is a tight $m$-vertex cut, i.e., a set of cardinality $m$ such that $X \setminus K_n$ has at least two components and every vertex of $K_n$ has at least one neighbour in each of them. Also $\gamma_n C_n$ is a component of $X \setminus  \gamma_n K_n$.

By \cite[Prop. 4.2]{ThWo} there are only finitely many tight $m$-vertex cuts containing some given vertex $r \in R$ and clearly every cut splits the graph in finitely many components, hence $\{(\gamma_n K_n\,, \gamma_n C_n) : n \geq 0\}$ is a finite set. Pick some $j>i \geq 0$ such that $(\gamma_i K_i\,, \gamma_i C_i)=(\gamma_j K_j\,, \gamma_j C_j)$ and let $\tau= \gamma_j^{-1} \gamma_i $. Then 
$$
C_i \supset \tau (K_i \cup C_i).
$$
We show that $\tau$ is hyperbolic and the direction $D(\tau)$ has size $m$.
Indeed, $(\tau^n K_i)_{n\ge 0}$ is a defining sequence for the end $D(\tau)$ belonging to all components $\tau^n C_i$, where $n \in \Z$. In particular $D(\tau)$ has size at most $m$. By Theorem \ref{thm:strip}, $\tau$ is hyperbolic. On the other hand there are $m$ disjoint paths from $K_i$ to $\tau K_i$. Their images under $\tau^n$ $(n \in \Z)$ build $m$ disjoint $\tau$-invariant double rays $\rho_1, \dots \rho_m$, implying that $D(\tau)$ has size $m$, as required. Add to those double rays a finite collection of finite paths connecting those double rays with each other and all their images under $\tau^n$, $n \in \Z$. After possibly replacing $\tau$ by a suitable power $\tau^k$, $(k\geq 1)$, we obtain a subgraph of $X$ which is a $\tau$-strip of size $m$.
\end{proof}

\begin{lem}\label{lem:parabolic}
 Let $X$ be locally finite and connected and $\Gamma \le \AUT(X)$ act quasi-transitively on $X$. If $\Gamma$ contains a parabolic element then for every $m \ge 1$, $X$ contains a $\tau$-strip of size at least $m$ for some suitable $\tau \in \Gamma$.
\end{lem}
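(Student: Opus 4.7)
The plan is to invoke Halin's Theorem~\ref{thm:strip} for the given parabolic $\gamma \in \Gamma$. Since $\gamma$ is parabolic, the unique fixed end $\omega = D(\gamma) = D(\gamma^{-1})$ must be \emph{thick}: otherwise part~(b) of Theorem~\ref{thm:strip} would force $\gamma$ to be hyperbolic. Part~(c) of that theorem then supplies infinitely many pairwise disjoint $\gamma$-invariant double rays inside $X$, from which, for any prescribed $m\ge 1$, I select $\rho_1,\dots,\rho_m$.

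Next, I would assemble a $\tau$-strip out of these double rays, in the spirit of the last step of the proof of Lemma~\ref{lem:size}. Non-ellipticity of $\gamma$ ensures that its restriction to each $\rho_i$ is a nontrivial translation (any other automorphism of a double ray would leave a finite set invariant, contradicting non-ellipticity of $\gamma$). Take a power $\tau = \gamma^{k_0}$ for some $k_0\ge 1$ to be chosen, and a finite connected subgraph $Y_0 \subset X$ containing, for each $i$, a fundamental domain of $\tau$ on $\rho_i$ together with finite paths in $X$ joining these pieces to one another. If $k_0$ is taken sufficiently large, the translates $\tau^n Y_0$ ($n\in\Z$) become pairwise disjoint. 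Define
$$
S := \bigcup_{n\in\Z} \tau^n Y_0 .
$$
By construction $S$ is connected, $\tau$-invariant, and contains every $\rho_i$; moreover $\langle \tau\rangle$ acts on $S$ with finitely many orbits, so $S$ is quasi-transitive.

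The main step that remains, and what I expect to be the chief obstacle, is to verify that $S$ has \emph{exactly} two ends, so that it qualifies as a strip in the sense of Definition~\ref{def:strip}. The $m$ disjoint rays $\rho_1,\dots,\rho_m$ already provide $m$ disjoint rays in each asymptotic direction of $S$, so once two-endedness is established, the size of the resulting strip is automatically at least $m$. For two-endedness, inside every single translate $\tau^n Y_0$ the rays $\rho_i$ are linked by finite paths; this fuses all forward halves of $\rho_1,\dots,\rho_m$ into a single end of $S$, and by the same argument the backward halves merge into one end. To see that forward and backward give \emph{distinct} ends of $S$, one exploits the pairwise disjointness of the translates $\tau^n Y_0$: removing the finite set $Y_0$ leaves no route in $S$ from the positive halves of the $\rho_i$'s back to their negative halves. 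Once this bookkeeping is done, $S$ is the desired $\tau$-strip of size at least $m$, and $\tau = \gamma^{k_0} \in \Gamma$, as required.
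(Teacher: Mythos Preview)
Your overall approach coincides with the paper's: invoke Theorem~\ref{thm:strip}(c) to obtain infinitely many disjoint $\gamma$-invariant double rays in the thick fixed end, pick $m$ of them, link them by a finite connected piece, and pass to the $\langle\gamma\rangle$-orbit to obtain a candidate strip $S$. The paper then simply records that $S$ is a strip---it is connected, locally finite, and $\langle\gamma\rangle\cong\Z$ acts on it with finitely many orbits, so $S$ is quasi-isometric to $\Z$ and hence two-ended---whose ends have size at least $m$, and finishes by applying Lemma~\ref{lem:size} to $S$ with the group $\langle\gamma\rangle$; this yields the required $\tau$-strip with $\tau$ a power of $\gamma$.

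Your direct attack on two-endedness contains a genuine gap. You claim that for large $k_0$ the translates $\tau^n Y_0$ become pairwise disjoint and then use this to separate the forward and backward halves by deleting $Y_0$. But the translates \emph{cannot} all be pairwise disjoint if $S=\bigcup_n\tau^nY_0$ is to be connected: each $\tau^n Y_0$ is finite, so a disjoint union would have only finite components. Indeed, since $Y_0$ contains a fundamental domain of $\tau$ on each $\rho_i$, consecutive translates necessarily meet on the rays. There is also a circularity in the order of choices: $Y_0$ is built from fundamental domains of $\tau=\gamma^{k_0}$ and so depends on $k_0$, yet $k_0$ is to be chosen in terms of $Y_0$. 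A correct repair is to fix the finite connecting subgraph $K$ first (independently of $k_0$), then choose $k_0$ large enough that the translates $\gamma^{k_0 n}K$ are pairwise disjoint---possible since $K$ is finite and $\gamma$ has infinite order---after which one can exhibit a finite separating set in $S$. Simpler still is to bypass the direct argument: once $\langle\gamma\rangle$ acts quasi-transitively on $S$, two-endedness is automatic, and Lemma~\ref{lem:size} delivers the $\tau$-strip with $\tau\in\langle\gamma\rangle\subset\Gamma$.
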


\begin{proof} Suppose that $\gamma \in \Gamma$ is parabolic and $\omega$ is the unique
 end fixed by $\gamma$. By Theorem \ref{thm:strip}, there are countably many disjoint double rays
$\ray_n\,$, $n \in \N$, which are invariant under $\gamma$ and represent $\omega$. 
For $m \in \N$, we can find some connected, finite subgraph $K$ of $X$ which meets
each of $\ray_1\,,\dots, \ray_m\,$. Then the subgraph spanned by $\ray_1\,,\dots, \ray_m$ together with all the $\gamma^nK$, $n\in \Z$, is a strip and its two ends have size at least $m$. Application of Lemma $\ref{lem:size}$ concludes the proof.
\end{proof}

\begin{rmk}
Whenever a graph $X$ contains a $\tau$-strip of size $m$, it also contains a $\tau^3$-strip of any size in $l \in \{1,\dots,m-1\}$. This can be seen by deleting $m-l$ vertices from the set $K$ from the definition of $\tau$-strips and their images under $\tau^{3n}$, $(n\in \Z)$.
\end{rmk}

\section{Context-freeness and ends}\label{sec:ends}

In this section we prove the first half of Theorem \ref{thm:main}, namely

\begin{thm}\label{thm:main-1}
Let $X$ be a connected, locally finite, deterministically edge-labelled graph on which 
$\AUT(X,\ell)$ acts quasi-transitively.
\begin{enumerate} 
\item If $L_{SAW}$ is context-free, every end of $X$ has size at most 2.
\item If $L_{SAW}$ is regular, every end of $X$ has size 1.
\end{enumerate}
\end{thm}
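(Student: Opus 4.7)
The plan is to prove both statements by contraposition: assume $X$ has an end of size $\geq m$, with $m=2$ for part (2) and $m=3$ for part (1), and conclude that $L_{SAW}$ cannot be regular (respectively, context-free). The argument splits into a structural reduction to strips and an application of the appropriate Pumping Lemma.

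\emph{Reduction step.} The first goal is to embed a $\tau$-strip of size exactly $m$ into $X$, for some $\tau \in \AUT(X,\ell)$. If $X$ has a thin end of size exactly $m$, Lemma \ref{lem:size} gives such a strip directly. If $X$ has a thin end of size $k>m$, the same lemma produces a $\tau$-strip of size $k$, which the remark following Lemma \ref{lem:parabolic} trims to a $\tau^3$-strip of size exactly $m$ by deleting $k-m$ vertices of the defining set $K$. If instead $X$ has a thick end, then since the quasi-transitive action of $\AUT(X,\ell)$ on an infinite locally finite graph forces the existence of non-elliptic elements, and by Theorem \ref{thm:strip}(b) any non-elliptic automorphism fixing a thick end must be parabolic, Lemma \ref{lem:parabolic} yields $\tau$-strips of every size; trim once more to size $m$.

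\emph{Pumping step.} Inside the strip $S$, use the $\tau$-invariant skeleton of $m$ disjoint double rays provided by Theorem \ref{thm:strip}(c) to construct a family of SAWs $\pi_N$ in $X$ of unbounded length, each traversing the fundamental domains $\tau^n K$ according to a prescribed pattern that exploits every lane of $S$. Let $z_N = \ell(\pi_N)\in L_{SAW}$. For part (2), assume $L_{SAW}$ is regular with pumping length $p$ and apply Lemma \ref{lem:pumplemreg} for $N$ sufficiently large: any admissible decomposition $z_N = u\0 v\0 \tilde u$ with $|v\0\tilde u|\le p$ and $|v|\ge 1$ would force $u\0 v^n\0 \tilde u \in L_{SAW}$ for all $n$. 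By determinism of the labelling, the pumped word reads a unique walk from $o$; the task is to produce an $n$ for which this walk self-intersects, contradicting the pumping conclusion. Part (1) runs along the same scheme, with Lemma \ref{lem:pumplemcf} applied to a SAW in a size-$3$ strip and admissible decompositions $z_N = u\0 v\0 w\0 \tilde v\0 \tilde u$.

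\emph{Main obstacle.} The technical heart is showing that no admissible pumping decomposition can succeed---i.e., for every choice of $(u,v,\tilde u)$ (respectively $(u,v,w,\tilde v,\tilde u)$) satisfying the lemma's constraints, some pumped walk must self-intersect. Since $|v|$ and $|\tilde v|$ are bounded by $p$ while $|z_N|$ grows, the short blocks $v$ and $\tilde v$ correspond, by determinism, to walks whose endpoints fall into only finitely many orbit-types under $\AUT(X,\ell)$; pigeonholing along $\pi_N$ and invoking the strip's quasi-transitivity then identify $v$ (or $\tilde v$) with the action of a $\tau$-like translation along $S$, so iterating it shifts the walk through $S$ in a regimented way that the rigid finite width of the strip cannot sustain without revisiting a vertex. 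The size-$3$ case is markedly more delicate because the context-free pumping lemma furnishes two pumping sites: one must rule out all simultaneous placements of $v$ and $\tilde v$, which is precisely the feature separating the size-$3$ strip from Zeilberger's context-free grammar for the ladder of size $2$. Making this case analysis clean, in particular handling decompositions in which $v$ straddles a boundary between lanes or between a lane and a bridge, is expected to be the principal technical effort.
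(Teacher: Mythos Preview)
Your overall strategy---find a $\tau$-strip of the right size, then pump---matches the paper, and your treatment of the thin-end cases via Lemma~\ref{lem:size} and the trimming remark is correct. But the reduction step in the thick-end case has two genuine gaps.

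First, the sentence ``the quasi-transitive action of $\AUT(X,\ell)$ on an infinite locally finite graph forces the existence of non-elliptic elements'' is false: finitely generated infinite torsion groups (Grigorchuk, Burnside) act transitively on their Cayley graphs with every element elliptic. The paper repairs this with Lemma~\ref{lem:torsion}, which \emph{uses} the context-free hypothesis (via the Pumping Lemma itself) to manufacture a non-elliptic element. You should invoke it explicitly.

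Second, and more seriously, even granting a non-elliptic $\gamma\in\Gamma$, you have no reason to expect $\gamma$ to fix the thick end in question; $\gamma$ could be hyperbolic with both fixed ends thin. So the clause ``any non-elliptic automorphism fixing a thick end must be parabolic'' is vacuous unless you first produce such an automorphism. This is exactly the case your case split leaves uncovered: $X$ has more than one end, at least one thick, and every thin end has size $<m$. You cannot extract a size-$m$ strip from the thin ends, and you have no parabolic element to hand. The paper resolves this with a substantial detour through accessibility and Bass--Serre theory: since the thin ends have bounded size, $X$ and hence $\Gamma$ are accessible; if $\Gamma$ were virtually free there would be no thick end, so some vertex group of the decomposition is a one-ended subgroup $\Gamma_1$; one then builds a $\Gamma_1$-invariant one-ended subgraph $X_1\subset X$ and applies the one-ended argument to $X_1$, transferring the conclusion back to $X$ via Lemma~\ref{lem:subgraph}. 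This is a missing idea in your proposal, not just a missing detail.

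On the pumping step: the paper does not pump inside $L_{\SAW,o}(X)$ directly. It first passes, via Lemma~\ref{lem:subgraph}, to $L_{\SAW,o'}(X')$ for a subdivided ladder or $3$-ladder $X'$, and then intersects with an explicit regular language of the form $a\,b\,c^k\,\bar b\,d^l$ (respectively $a\,b\,c\,d\,e^k\,\bar d\,f\,\bar b\,g^l\,b\,(cf)^m$). The resulting language $\bar L$ is still regular (respectively context-free), and self-avoidance forces sharp numerical inequalities among the exponents ($k>l$, respectively $k\ge l>m$), which the pumping lemma immediately violates. This bypasses entirely the ``boundary-straddling'' case analysis you anticipate as the main obstacle; your direct approach may be workable, but it is considerably harder than necessary.
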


The proof is based on the following two lemmas and two propositions.


\begin{lem}\label{lem:subgraph} Let $X'$ be a subgraph of $X$ which is invariant
 under a subgroup $\Gamma'$ of $\AUT(X,\ell)$ acting quasi-transitively on $X'$.
Suppose that $L_{\SAW,o}(X)$ is regular, resp. context-free. Then there is $o' \in V\!X'$
such that $L_{\SAW,o'}(X')$ is also regular, resp. context-free.
\end{lem}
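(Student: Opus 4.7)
The plan is to reduce $L_{\SAW,o'}(X')$ to an intersection of a left-quotient of $L_{\SAW,o}(X)$ with a regular language that tests whether the walk in $X$ actually stays inside $X'$.

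First, I would choose $o' \in V\!X'$ together with a SAW $\pi_0$ in $X$ from $o$ to $o'$ whose vertex set meets $V\!X'$ only at $o'$. If $o \in V\!X'$, take $o' = o$ and $\pi_0$ the empty walk; otherwise, using connectedness of $X$, take $\pi_0$ to be a shortest walk in $X$ from $o$ to the vertex set $V\!X'$, which by minimality touches $V\!X'$ only at its endpoint. Set $w_0 = \ell(\pi_0) \in \Si^*$.

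Second, I would form the left quotient $L^* = \{w \in \Si^* : w_0 w \in L_{\SAW,o}(X)\}$. Left quotient by a fixed word is a standard closure operation that preserves both regularity and context-freeness, so $L^*$ inherits the language-theoretic status of $L_{\SAW,o}(X)$. By determinism of the labelling, the unique walk in $X$ starting at $o$ and labelled by $w_0$ is exactly $\pi_0$, and so $L^*$ is precisely the set of labels of SAWs in $X$ that start at $o'$ and whose vertex set avoids $V\!\pi_0 \setminus \{o'\}$.

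Third, I would construct a finite deterministic automaton $\mathcal{A}$ recognising the language $R \subseteq \Si^*$ of words that label walks in $X$ starting at $o'$ and remaining inside $X'$ (vertices and edges). Its states are the finitely many $\Gamma'$-orbits on $V\!X'$ together with an absorbing dead state; the initial state is the orbit of $o'$, and every orbit is accepting. Since $\Gamma'$ preserves both $\ell$ and $X'$, it is well defined, for each orbit $O$ and each letter $a \in \Si$, whether the outgoing $a$-edge at a vertex of $O$ lies in $E\!X'$, and if so the orbit of its endpoint depends only on $O$; this makes the transitions well defined and shows that $R$ is regular. I would then verify the identity $L_{\SAW,o'}(X') = L^* \cap R$. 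The inclusion $\supseteq$ uses that a SAW in $X'$ from $o'$ stays in $V\!X'$ and hence avoids $V\!\pi_0 \setminus \{o'\}$ by our choice of $\pi_0$, so concatenating its label with the prefix $w_0$ yields a SAW in $X$ from $o$; conversely, a word in $L^* \cap R$ labels a walk in $X$ from $o'$ that is self-avoiding (from $L^*$) and whose vertices and edges all lie in $X'$ (from $R$), hence is a SAW in $X'$. Closure of regular and context-free languages under intersection with a regular language then completes the proof.

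The main obstacle is the careful choice of $\pi_0$: if $\pi_0$ were allowed to meet $V\!X'$ in its interior, then the forbidden set $V\!\pi_0 \setminus \{o'\}$ would exclude some genuine SAWs of $X'$ from $L^*$, and the identity $L_{\SAW,o'}(X') = L^* \cap R$ would fail. Connectedness of $X$ and the shortest-path choice remove this obstruction cleanly, while quasi-transitivity of $\Gamma'$ on $X'$ is exactly the hypothesis that keeps the state set of $\mathcal{A}$ finite.
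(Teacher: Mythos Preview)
Your argument is correct and essentially the same as the paper's: both pick $o'$ as a nearest point of $X'$ to $o$ (so that the geodesic $\pi_0$ meets $X'$ only at $o'$), both use the factor graph $\Gamma'\backslash X'$ as a finite automaton for the regular language of walks in $X'$ from $o'$, and both conclude via closure under intersection with a regular language and under prefix removal. The only cosmetic difference is the order of operations---you take the left quotient by $w_0$ first and then intersect with $R$, while the paper prepends $v_0$ to the regular language, intersects with $L_{\SAW,o}(X)$, and strips the prefix afterwards---but these are equivalent. (One tiny slip: your ``$\supseteq$'' and ``conversely'' paragraphs are labelled the wrong way round, though the content of each is fine.)
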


\begin{proof} $X'$ is also a deterministically labelled graph. If we choose any $o' \in V\!X'$
 and $\Pi'$ is the set of \emph{all} walks in $X'$ starting at $o'$, then $L(\Pi')$ is
clearly a regular language. Namely, the factor graph $\Gamma' \backslash X'$ is a finite
state automaton for $L(\Pi')$, easily converted into a right-linear grammar; see \cite{Ha}.

Now there is some $o' \in V\!X'$ with $d(o,V\!X')=d(o,o')$, and there is a path $\pi_0$ in $X$
of that length from $o$ to $o'$. Let $v_0 = \ell(\pi_0)$, and let $\pi_0 \circ \Pi'$
be the set of all concatenated paths $\pi_0 \circ \pi'$, where $\pi' \in \Pi'$.
Thus, $L(\pi_0 \circ \Pi') = v_0\0 L(\Pi') = \{ v_0w : w \in  L(\Pi')\}$ is again regular. 

If $L_{\SAW,o}(X)$ is regular (resp. context-free), then by \cite{Ha} also 
$L_{\SAW,o}(X) \cap L(\pi_0 \circ \Pi')$ is regular (resp. context-free). Since 
$o'$ is the only vertex of $\pi_0$ which is contained in $V\!X'$, 
$$
L_{\SAW,o}(X) \cap L(\pi_0 \circ \Pi') = v_0\0 L_{\SAW,o'}(X').
$$
If we delete from the latter language the common prefix $v_0\,$, we also get
a regular (resp. context-free) language.  
\end{proof}

\begin{lem}\label{lem:torsion}
Let $X$ be a connected, infinite, locally finite and deterministically 
labelled graph and let $\Gamma \leq AUT(X,\ell)$ act quasi-transitively on $X$. 
Assume that $L_{SAW} = L_{\SAW,o}$ is context-free for a choice of $o \in V\!X$. 
Then $\Gamma$ contains a non-elliptic element.
\end{lem}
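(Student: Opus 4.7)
The plan is to assume that $L_{\SAW}$ is context-free and then use the context-free Pumping Lemma to extract from a sufficiently long SAW an element of $\Gamma$ of infinite order. Such an element is automatically non-elliptic: since $\Gamma \le \AUT(X,\ell)$ acts fixed-point freely on $V\!X$ by Lemma \ref{lem:fingen}, any automorphism fixing a non-empty finite subset of $V\!X$ setwise has a power that fixes it pointwise, and this power must then be the identity, so inside $\Gamma$ elliptic coincides with being of finite order.

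Fix a pumping length $p$ for $L_{\SAW}$. Since $X$ is infinite, locally finite and connected, there is a SAW of length $\ge p$ starting at $o$; let $z$ be its label. Lemma \ref{lem:pumplemcf} yields a decomposition $z = u\0v\0w\0\tilde v\0\tilde u$ with $|v\0\tilde v| \ge 1$ such that $u\0v^n\0w\0\tilde v^n\0\tilde u \in L_{\SAW}$ for every $n \ge 0$. Without loss of generality $|v| \ge 1$, since otherwise the entirely analogous argument works starting from the vertex reached after the prefix $u\0w$ and iterating the letter $\tilde v$ in place of $v$. For each $n$, let $\pi_n$ be the SAW from $o$ with label $u\0v^n\0w\0\tilde v^n\0\tilde u$; this walk is unique by determinism. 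Let $y_i$ denote the vertex reached along $\pi_n$ after reading the prefix $u\0v^i$. Determinism makes $y_i$ independent of $n$ once $n \ge i$, while self-avoidance of each $\pi_n$ forces the vertices $y_0, y_1, y_2, \dots$ to be pairwise distinct.

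Next, since $\Gamma$ has only finitely many orbits on $V\!X$, the pigeonhole principle provides indices $0 \le i < j$ and an element $\gamma \in \Gamma$ with $\gamma y_i = y_j$. I would then upgrade this single identity to $\gamma y_{i+k} = y_{j+k}$ for every $k \ge 0$ by exploiting determinism and label-preservation: the image under $\gamma$ of the unique $v$-labelled walk starting at $y_i$ (which ends at $y_{i+1}$) is a $v$-labelled walk starting at $\gamma y_i = y_j$, and determinism forces it to coincide with the $v$-labelled walk ending at $y_{j+1}$; induction on $k$ closes the step. Iterating this identity yields $\gamma^m y_i = y_{i + m(j-i)}$ for all $m \ge 0$, and because the $y_\ell$ are distinct and $j - i \ge 1$ the $\gamma$-orbit of $y_i$ is infinite. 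Hence $\gamma$ has infinite order, and by the first paragraph $\gamma$ is the required non-elliptic element of $\Gamma$.

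The main obstacle is the conceptual gap between the purely language-theoretic output of the Pumping Lemma and the group-theoretic conclusion: a priori the many copies of $v$ threaded through $\pi_n$ could traverse totally unrelated regions of $X$, and pumping alone tells us nothing about symmetries. The mechanism that closes this gap is that determinism of $\ell$ converts the combinatorial repetition of the letter $v$ into a genuine infinite chain of distinct vertices $(y_i)$ inside $X$, and quasi-transitivity then forces a single orbit-coincidence $y_i \sim y_j$ which, once again by determinism, propagates into a full translation of the entire chain by $\gamma$ -- precisely what is needed for infinite order.
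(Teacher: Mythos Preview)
Your proof is correct and follows essentially the same route as the paper: both apply the Pumping Lemma to obtain an infinite sequence of distinct vertices spaced by the word $v$, then invoke quasi-transitivity to find a label-preserving automorphism carrying one such vertex to another. The only difference is in the last step---the paper quotes \cite[Prop.~12]{Ha2} to conclude that $\tau$ is non-elliptic, whereas you give a self-contained argument by making the propagation $\gamma y_{i+k}=y_{j+k}$ explicit and using that, by Lemma~\ref{lem:fingen}, elliptic coincides with finite order inside $\AUT(X,\ell)$.
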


\begin{proof} 
Let $p$ be the pumping length of $L_{\SAW}$ given by Lemma \ref{lem:pumplemcf}.
Let $z \in L_{SAW}$ with $|z|\geq p$. Then it can be written as 
$z=uvw\tilde v \tilde u$ where $|vw\tilde v|\leq p$, $|v\tilde v|\geq 1$ 
and $uv^nw\tilde v^n \tilde u \in L_{SAW}$ for all $n \geq 0$. 
Now either $|v|>0$ or $|v|=0$ and $|\tilde v|>0$. Set $a=u$, $b=v$ in the first case and 
$a=uw$, $b=\tilde v$ in the second case (so that $a, b \in \Si^*$ and $|b| > 0$).
Let $x_0$ be the end-vertex of the path starting at $o$ and labelled by the word $a$.
Then, for every $n\geq 0$, we have the unique self-avoiding walk $\pi_n$ of length
$n|b|$ which starts at $x_0$ and has label $b^n$. Thus, $\pi_{n+1}$ is a self-avoiding
extension of $\pi_n$, and in the limit we obtain a ray $\ray = [x_0\,,x_1\,,\dots]$.
Using that $\Gamma$ acts quasi-transitively on $X$ there must be some $\tau \in \Gamma$ and some  $0 \le i < j$ such that $\tau x_{i|b|}=x_{j|b|}\,$. Without loss of generality (up to truncation of an initial piece of $\ray$), we assume $i=0$. Then for every $n \geq 1$, $\tau^n x_0= x_{nj|b|} \neq x_0$ and \cite[Prop. 12]{Ha2} yields that $\tau$ is non-elliptic.
\end{proof}

\begin{rmk}\label{rmk:torsion} In group theoretical terms the last lemma says
 that if the finitely generated group $\Gamma \leq \AUT(X,\ell)$ acts quasi-transitively on $X$ and is 
an infinite torsion group then $L_{\SAW}$ is \emph{not} context-free.
\end{rmk}

For the proofs of the next two propositions, we adopt the following notation:
if $w = \ell(\pi) \in \Si^*$ is the label of an arbitrary walk $\pi$ in $X$, then
$\bar w$ denotes the label of the reversed walk.

\begin{pro}\label{pro:nonreg} 
Let $X$ be a connected, infinite,  deterministically edge-labelled graph on which 
$\Gamma =\AUT(X,\ell)$ acts quasi-transitively.

\smallskip 

If $X$ contains a $\tau$-strip of size $2$, where $\tau \in \Gamma$, 
then $L_{SAW}$  is not regular.
\end{pro}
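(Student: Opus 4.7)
The plan is to apply the Pumping Lemma for regular languages (Lemma \ref{lem:pumplemreg}) to a carefully chosen long SAW $z_n$ whose walk uses up both vertices of the two extremal $2$-cuts $K_n, K_{-n}$; any pumped continuation must then be trapped in a finite region whose size grows only linearly in $n$, yet its length is unbounded in the pumping index, a contradiction.

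By Lemma \ref{lem:subgraph} applied to the $\tau$-strip $S \subseteq X$ and the subgroup $\langle\tau\rangle$ of $\Gamma$ (which acts quasi-transitively on $S$), it is enough to show that $L_{\SAW,o}(S)$ is not regular for some $o \in V\!S$. Write the defining $2$-cut of $S$ as $K = \{\xi, \eta\}$, $\xi \in \rho_1$, $\eta \in \rho_2$, and take $o := \xi$. Let $a$ (resp.\ $c$) be the $\tau$-invariant label of the rail step $\xi \to \tau\xi$ along $\rho_1$ (resp.\ $y \to \tau^{-1}y$ along $\rho_2$); let $b$ (resp.\ $d$) label a fixed finite cross-path from $\xi$ to $\eta$ (resp.\ $\eta$ to $\xi$) inside one $\tau$-fundamental domain of $S$. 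For $n$ large set
\[
z_n \;=\; a^n\, b\, c^{2n}\, d\, a^{n-1}.
\]
Its walk from $o$ runs right on $\rho_1$ from $\xi$ to $\tau^n\xi$, crosses to $\tau^n\eta$, runs left on $\rho_2$ to $\tau^{-n}\eta$, crosses back to $\tau^{-n}\xi$, and runs right on $\rho_1$ to $\tau^{-1}\xi$. For $n$ larger than the $\tau$-diameter of the fixed cross-path, these five segments are pairwise vertex-disjoint, so $z_n \in L_{\SAW}$; the key point is that both vertices of each of $K_n$ and $K_{-n}$---namely $\tau^{\pm n}\xi$ and $\tau^{\pm n}\eta$---lie on the walk.

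Assume for contradiction that $L_{\SAW}$ is regular, with pumping length $p$, and take $n$ so large that $|z_n| > p$ and the last $p$ characters of $z_n$ lie entirely inside the terminal $a^{n-1}$. Lemma \ref{lem:pumplemreg} provides $z_n = u\, v\, \tilde u$ with $|v| \ge 1$, $|v\tilde u| \le p$ and $u v^m \tilde u \in L_{\SAW}$ for every $m \ge 0$. By determinism of $\ell$, each such word is the label of a unique walk from $o$, self-avoiding by hypothesis. For $m \ge 2$ its first $|uv|$ edges agree with those of $z_n$, and because $v\tilde u \subseteq a^{n-1}$ the $uv$-subwalk still contains all four of $\tau^{\pm n}\xi$ and $\tau^{\pm n}\eta$. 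As $K_n$ and $K_{-n}$ are $2$-vertex cuts separating the two ends of $S$, the continuation of the walk past the end of $uv$ can cross neither, so it is confined to the finite subgraph $R_n \subseteq V\!S$ of vertices at $\tau$-positions in $[-n, n]$. By quasi-transitivity of $\langle\tau\rangle$ on $S$, $|R_n| \le Cn$ for a constant $C = C(S)$ independent of $n$, so any SAW inside $R_n$ has at most $Cn$ edges, whereas $|u v^m \tilde u| = |u| + m|v| + |\tilde u|$ grows without bound in $m$ at $n$ fixed. Taking $m$ large enough produces the contradiction, and $L_{\SAW}$ is not regular.

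The main technical obstacle is verifying the two structural inputs: that $z_n$ really is a SAW (pairwise vertex-disjointness of its five segments for $n$ larger than the cross-path's $\tau$-diameter), and that the $uv$-subwalk---not merely $z_n$ itself---still visits the four vertices of $K_n \cup K_{-n}$, which holds because $v\tilde u \subseteq a^{n-1}$ forces $v$ and $\tilde u$ to contribute only $\rho_1$-visits at $\tau$-positions in $(-n, 0)$.
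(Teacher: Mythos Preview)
Your trapping argument---exhaust both $2$-cuts $K_{\pm n}$ with the SAW $z_n$, then observe that any self-avoiding continuation is confined to the linear-size region $R_n$ between them, so pumping produces words too long to label a SAW---is a genuinely different route from the paper's. The paper first passes to the ladder subdivision $X'\subseteq S$, intersects $L_{\SAW,o'}(X')$ with the regular language $\{a\0b\0c^k\0\bar b\0d^l:k,l\ge 0\}$, and pumps the suffix to violate the explicit arithmetic constraint $k>l$. Your approach is more geometric and sidesteps the intersection trick; its cost is that you must control where the pumped walk can wander inside the full strip $S$, whereas the paper's restriction to the ladder skeleton makes the trajectory of any word in $\bar L$ completely explicit.

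There is, however, a genuine gap in your reduction step. Lemma~\ref{lem:subgraph} says: if $L_{\SAW,o}(X)$ is regular then $L_{\SAW,o'}(S)$ is regular for \emph{one particular} $o'\in V\!S$ (a vertex of $S$ nearest to $o$). To contradict this you must show $L_{\SAW,o'}(S)$ is not regular for \emph{that} $o'$, equivalently for every $o'$. Your sentence ``it is enough to show that $L_{\SAW,o}(S)$ is not regular for some $o\in V\!S$'' has the quantifier reversed, and you then set $o:=\xi$ by fiat. This is exactly why the paper tracks where $o'$ can sit (up to the $\tau$-action) and absorbs a path from $o'$ onto the ladder into the prefix $a$ of its pumped word; your argument can be repaired the same way---prepend to $z_n$ a bounded path from $o'$ to some $\tau^M\xi$ and shift indices so that this prefix lies inside $R_n$---but as written the reduction does not go through. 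A smaller imprecision: the cross-paths you label $b$ and $d$ must be chosen to meet $\rho_1\cup\rho_2$ only at their endpoints (take a shortest path between the two rays, as the paper does), or else the five segments of $z_n$ need not be pairwise vertex-disjoint even for large $n$.
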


\begin{proof}
We suppose that $L_{SAW,o}(X)$ is regular for  $o \in V\!X$ and will reach a contradiction.

\smallskip

The strip has two $\tau$-invariant doubly infinite rays $\ray_1=[x_n\,, n \in \Z]$, $\ray_2=[y_n\,, n\in \Z]$ and there must be a shortest path $\pi$ connecting the two rays, which is contained in the connected subgraph $Y$ mentioned in the definition of $\tau$-strips. Without loss of generality,
we assume that $\pi$ goes from $x_0$ to $y_0$. 
Then the subgraph $X'$ of $X$ spanned by $\ray_1\,$, $\ray_2$ and all $\tau^{n}\pi$ ($n \in \Z$)
is a $\tau$-invariant subdivision of the bi-infinite ladder, see Figure \ref{fig:2ladder}.
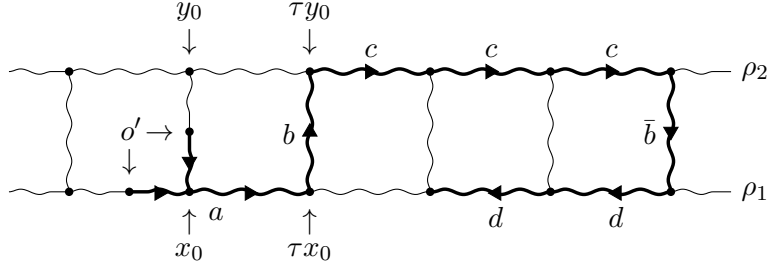
\begin{figure}[ht]
\pgfdecorationsegmentamplitude=1pt
\pgfdecorationsegmentlength=0.5cm
\centering
  	\begin{tikzpicture}[decoration={snake,segment length=5mm}]
  	\begin{scope}[scale=0.8]
	\draw[decorate] (-7,0) -- (-6,0);
	\draw[decorate] (-7,2) -- (-6,2);
	\draw[decorate] (4,0) -- (5,0);
	\draw[decorate] (4,2) --  (5,2);
	\draw[decorate] (-6,0) -- (-5,0);
	\draw[decorate] (-6,2) -- (-4,2);
	\draw[decorate] (-4,2) -- (-2,2);
	\draw[decorate] (-2,0) -- (0,0);
	\draw[decorate] (-6,0) -- (-6,2);
	\draw[decorate] (-4,2) -- (-4,1);
	\draw[decorate] (0,0) -- (0,2);
	\draw[decorate] (2,0) -- (2,2);
	\draw[decorate,line width=1.3pt] (4,0) -- (4,2);
	\draw[decorate,line width=1.3pt] (4,2) -- (2,2);
	\draw[decorate,line width=1.3pt] (2,2) -- (0,2);
	\draw[decorate,line width=1.3pt] (0,2) -- (-2,2);
	\draw[decorate,line width=1.3pt] (4,0) -- (2,0);
	\draw[decorate,line width=1.3pt] (2,0) -- (0,0);
	\draw[decorate,line width=1.3pt] (-2,0) -- (-2,2);
	\draw[decorate,line width=1.3pt] (-2,0) -- (-4,0);
	\draw[decorate,line width=1.3pt] (-4,0) -- (-4,1);
	\draw[decorate,line width=1.3pt] (-4,0) -- (-5,0);
    \draw[-triangle 60] (-2.95,-0.02) -- node[yshift=-8pt,xshift=-15pt] {$a$} +(0.1,0);
	\draw[-triangle 60] (-2,1.05) -- node[xshift=-8pt,yshift=-3pt] {$b$} +(0,0.1);
	\draw[-triangle 60] (-0.95,2) -- node[yshift=8pt,xshift=-2pt] {$c$} +(0.1,0);
	\draw[-triangle 60] (1.05,2) -- node[yshift=8pt,xshift=-2pt] {$c$} +(0.1,0);
	\draw[-triangle 60] (3.05,2) -- node[yshift=8pt,xshift=-2pt] {$c$} +(0.1,0);
	\draw[-triangle 60] (4,0.95) -- node[xshift=-8pt,yshift=2pt] {$\bar b$} +(0,-0.1);
	\draw[-triangle 60] (1.05,-0.02) -- node[yshift=-9pt,xshift=2pt] {$d$} +(-0.1,0);
	\draw[-triangle 60] (3.05,-0.02) -- node[yshift=-9pt,xshift=2pt] {$d$} +(-0.1,0); 
    \draw[-triangle 60] (-4,0.45) --  +(0,-0.1);
    \draw[-triangle 60] (-4.45,-0.02) -- +(0.1,0);
        \node ( ) at (-4.7,1.05) {$o'\! \rightarrow$};
        \node ( ) at (-5,0.5) {$\downarrow$};
        \node ( ) at (5.4,0) {$\ray_1$};
        \node ( ) at (5.4,2) {$\ray_2$};
        \node ( ) at (-4,-1) {$x_0$};
        \node ( ) at (-4,-0.5) {$\uparrow$};
        \node ( ) at (-2,-1) {$\tau x_0$};
        \node ( ) at (-2,-0.5) {$\uparrow$};
        \node ( ) at (-4,3) {$y_0$};
        \node ( ) at (-4,2.5) {$\downarrow$};
        \node ( ) at (-2,3) {$\tau y_0$};
        \node ( ) at (-2,2.5) {$\downarrow$};
	\foreach \j in {-3,...,1}{
		\fill (2*\j,2) circle (2pt);
		\fill (2*\j,0) circle (2pt);}
	\fill (4,0) circle (2pt);
	\fill (4,2) circle (2pt);
    \fill (-5,0) circle (2pt);
    \fill (-4,1) circle (2pt);
 	\end{scope}
  	\end{tikzpicture}
  	\caption{Labelled subdivision of the ladder}
  	\label{fig:2ladder}
\end{figure}
We can apply Lemma \ref{lem:subgraph} to $X'$, and there is $o' \in V\!X'$ such 
that $L_{\SAW,o'}(X')$ is regular. Without loss of generality, we assume that either
$o'$ lies on $\ray_1$ between $x_0$ and $\tau^{-1}x_0$ and is distinct from the latter, 
or that $o'$ lies on $\pi$ and is distinct from $y_0$.  (Otherwise we can exchange the two rays.) In Figure \ref{fig:2ladder}, we indicate the possible positions
of $o'$.

Let $a$ be the label of the path from $o'$ via $x_0$ to $\tau x_0\,$.
Write $b$ for the common label of all the paths $\tau^n \pi$ ($n \in \Z$)
 and $\bar b$ for the label of the reversed paths. Next, let $c$ denote the common label of each of 
the paths from any $\tau^n y_0$ to $\tau^{n+1} y_0$ within $\ray_2$.
And finally, let $d$ be the common label of the paths from any $\tau^{n+1}x_0$ to $\tau^n x_0$ 
within $\ray_1$ ($n \in \Z$). Each of the words $a,b,\bar b, c,d \in \Si^*$ is non-empty,
but they are in general not just elements of $\Si$.

The language consisting of all words
\begin{equation}\label{eq:words}
s=a\0 b\0 c^k\0 \bar b\0 d^l,\quad  k,l \in \N, 
\end{equation}
is regular, so that by the closure properties of regular languages,
also its intersection $\bar L$ with $L_{\SAW,o'}(X')$ is regular. 
Now, $\bar L$ consists of the labels of all self-avoiding walks starting at $o'$ 
which are of the form \eqref{eq:words}. Looking at Figure \ref{fig:2ladder}, 
such a walk goes from $o'$ to $\tau x_0\,$, then upwards to $\tau y_0$  and to the 
right to $\tau^{k+1}y_0$ along $\ray_2\,$, then downwards to $\tau^{k+1}x_0\,$,
and finally to the left to $\tau^{k+1-l}x_0\,$. Thus, in order to be self-avoiding, one must have 
$k > l$.

Let $p$ be the pumping length of Lemma \ref{lem:pumplemreg} for $\bar L$,
and let $z = a\0b\0 c^{p+1}\0 \bar b\0d^p \in \bar L$. In the decomposition
$z = u\0v\0\tilde u$ of the lemma, $|v\0\tilde u|\le p$ implies that $v\0\tilde u$ 
is a postfix of $d^p$.
That is, $d^p = \tilde v \0v\0\tilde u$ for some word $\tilde v \in \Si^*$. 
Now also $w = u\0v^2\0\tilde u$ must be in $\bar L$, so that there must be $k, l$
such that
$$
a\0 b\0 c^k\0 \bar b\0 d^l = w = a\0b\0 c^{p+1}\0 \bar b\0 \tilde v\0v^2 \0\tilde u.
$$
Since the labelling is deterministic and the first symbol of $c$ and the first symbol of $\bar b$ are both labels of different edges starting at $y_0$, these symbols must be different. We can conclude that $k=p+1$ and $d^l =  \tilde v\0v^2 \0\tilde u$. This is longer than $d^p$, so that $l \ge p+1=k$.
But then the walk with label $w$ starting at $o'$ is not self-avoiding, a contradiction.
\end{proof}

\begin{pro}\label{pro:noncf} 
Let $X$ be as in Proposition \ref{pro:nonreg}.

\smallskip 

If $X$ contains a $\tau$-strip of size $3$, where $\tau \in \Gamma$, 
then $L_{SAW}$ is not context-free.
\end{pro}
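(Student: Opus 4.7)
The plan is to mirror the proof of Proposition \ref{pro:nonreg} but using the context-free pumping lemma (Lemma \ref{lem:pumplemcf}) and the extra ray of a size-$3$ strip to force \emph{two} simultaneous strict self-avoidance constraints that cannot be pumped away. Suppose for contradiction that $L_{\SAW,o}(X)$ is context-free.

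First I would pass, via Lemma \ref{lem:subgraph}, to the $\tau$-strip $S$ itself (quasi-transitive under $\langle\tau\rangle$), yielding a root $o'\in V\!S$ such that $L_{\SAW,o'}(S)$ is context-free. Let $\rho_1,\rho_2,\rho_3$ be the three disjoint $\tau$-invariant double rays. After possibly replacing $\tau$ by a suitable power, choose short paths $\pi_{12}$ from $\rho_1$ to $\rho_2$ and $\pi_{23}$ from $\rho_2$ to $\rho_3$ inside the fundamental domain with matching $\tau$-columns, so $\pi_{12}\circ\pi_{23}$ goes ``straight up'' from $\rho_1$ to $\rho_3$. Denote by $A_i\in\Si^*$ the label of one $\tau$-step along $\rho_i$, and by $B_{12},B_{23}\in\Si^*$ the labels of the chosen rungs, with overlines for reversals.

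For $(k_1,k_2,k_3,k_4)\in\N^4$ consider the walk starting at $o'$ (which, by prepending a fixed prefix, we may assume is at ``column $0$'' of $\rho_1$) whose label is
\[
\omega_{k_1,k_2,k_3,k_4}=A_1^{k_1}\,B_{12}\,B_{23}\,A_3^{k_2}\,\bar B_{23}\,\bar B_{12}\,\bar A_1^{k_3}\,B_{12}\,A_2^{k_4}.
\]
Geometrically: right on $\rho_1$, up through $\rho_2$ to $\rho_3$, right on $\rho_3$, straight down to $\rho_1$ at a new column, left on $\rho_1$ into the ``interior'' between the two visited columns, up to $\rho_2$ at an unvisited column, right on $\rho_2$. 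Tracking visited columns on each ray shows this walk is self-avoiding iff $k_1\ge 0$, $k_2\ge 1$, $1\le k_3<k_2$, and $0\le k_4<k_3$. Setting $R=A_1^*\,B_{12}B_{23}\,A_3^*\,\bar B_{23}\bar B_{12}\,\bar A_1^*\,B_{12}\,A_2^*$, determinism of $\ell$ and closure of CF under intersection with regular give that $\bar L:=L_{\SAW,o'}(S)\cap R$ is context-free and consists of exactly these $\omega_{k_1,k_2,k_3,k_4}$.

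I would then apply Lemma \ref{lem:pumplemcf} to $\bar L$ with pumping length $p$ on the tight word $z=\omega_{0,p+2,p+1,p}$: its three long blocks $A_3^{p+2},\bar A_1^{p+1},A_2^{p}$ have successive lengths separated by exactly $1$, and any window of length $\le p$ meets at most two adjacent ones. Enumerate the possible positions of $v,\tilde v$ in $z=u\0v\0w\0\tilde v\0\tilde u$:
\begin{itemize}
\item if $v$ or $\tilde v$ meets a marker $B_{12},B_{23},\bar B_{23},\bar B_{12}$, then pumping $n=2$ duplicates a marker and leaves $R$;
\item if $v,\tilde v$ lie inside a single long block, some choice of $n\in\{0,2\}$ changes the corresponding $k_i$ by at least $1$, breaking one of the strict inequalities $k_4<k_3<k_2$ because the gaps are only $1$;
\item if $v\subset A_3^{p+2},\tilde v\subset\bar A_1^{p+1}$ or $v\subset\bar A_1^{p+1},\tilde v\subset A_2^{p}$, simultaneously requiring $n=0$ and $n=2$ to preserve the two relevant inequalities forces $|v|=|\tilde v|=0$, contradicting $|v\0\tilde v|\ge 1$.
\end{itemize}
No decomposition survives, so $\bar L$ is not context-free, contradicting the hypothesis.

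The main obstacle is the straddling case: it requires a \emph{two-sided} pumping argument (combining $n=0$ with $n=2$) rather than the one-sided pumping that sufficed in Proposition \ref{pro:nonreg}; the pincer from above and below is exactly what forces the two increments to vanish simultaneously. A lesser technical point is the initial alignment of the rungs $\pi_{12},\pi_{23}$ in a general $\tau$-strip, handled by replacing $\tau$ with a sufficiently high power so that matching rungs exist in an enlarged fundamental domain.
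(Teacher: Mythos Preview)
Your proposal is essentially correct and follows the same strategy as the paper: pass to a ladder-like subgraph via Lemma~\ref{lem:subgraph}, intersect $L_{\SAW}$ with a regular pattern encoding a family of walks whose self-avoidance forces a chain of strict inequalities among three exponents, then apply Lemma~\ref{lem:pumplemcf} to a word where the gaps are exactly~$1$ and derive a contradiction from the case analysis. The paper's walk pattern differs in detail --- it uses staggered rungs $\pi_1:x_0\to y_0$ and $\pi_2:y_r\to z_r$ with $y_r$ strictly between $y_0$ and $\tau y_0$, yielding inequalities $k\ge l>m$ --- but the pumping mechanism is identical, and in both cases the window of length $\le p$ can affect at most two adjacent exponents, so one of $n=0$ or $n=2$ breaks an inequality.

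One point to tighten: your requirement that $\pi_{12}\circ\pi_{23}$ be a single path ``straight up'' through a common vertex of $\rho_2$ is not obviously arranged just by replacing $\tau$ with a power --- translating rungs does not make them meet. You can instead extend one rung along $\rho_2$ until it reaches the other, but then that rung touches $\rho_2$ in a segment rather than a single vertex, and your column bookkeeping for self-avoidance must be adjusted accordingly (the conditions remain of the same shape, but the argument needs a line to say so). The paper sidesteps this by allowing staggered rungs from the outset, which is why its word has the extra pieces $c$ and $f$ along $\rho_2$. Also, in your Case~3 the phrase ``simultaneously requiring $n=0$ and $n=2$'' is slightly misleading: in fact $n=0$ alone kills the case $v\subset A_3^{k_2},\tilde v\subset\bar A_1^{k_3}$ (it forces $k_3'\le k_4$ or $k_2'\le k_3$), and $n=2$ alone kills the case $v\subset\bar A_1^{k_3},\tilde v\subset A_2^{k_4}$; no two-sided pincer is needed.
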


\begin{proof} We suppose that $L_{\SAW,o}(X)$ is context-free and will again 
arrive at a contradiction.  
  
\smallskip

The strip contains three $\tau$-invariant rays $\ray_1=[x_n\,, n \in \Z]$, 
$\ray_2=[y_n\,, n\in \Z]$ and  $\ray_3=[z_n\,, n \in \Z]$. Up to renumbering the  
rays, exchanging them, and possibly replacing $\tau$ with a power of itself, we may assume 
to have the following situation: there is a path $\pi_1$ from $x_0$ to $y_0$ which meets
$\ray_1$ and $\ray_2$ only at its endpoints and does not meet $\ray_3\,$;  
there is a path $\pi_2$ from $y_r$ to $z_r$ ($r \ge 1$) which meets
$\ray_2$ and $\ray_3$ only at its endpoints and does not meet $\ray_1\,$, and furthermore,
$y_r$ lies strictly between $y_0$ and $\tau y_0\,.$ Then the subgraph $X'$ of $X$ spanned
by the three rays and all images $\tau^n \pi_1$ and $\tau^n \pi_2$ is a subdivision of
the bi-infinite $3$-ladder, see Figure \ref{fig:3ladder}.

 \begin{figure}[ht]
\pgfdecorationsegmentamplitude=1pt
\pgfdecorationsegmentlength=0.5cm
\centering
  	\begin{tikzpicture}[decoration=snake]
  	\begin{scope}[scale=1.2]
  	\foreach \j in {-2,...,0}{
		\draw[decorate] (2*\j,2) -- ({2*(\j+1)},2);
		\draw[decorate] (2*\j-1,0) -- ({2*(\j+1)-1},0);
		\draw[decorate] (2*\j+1,1) -- ({2*(\j+1)},1);
		\draw[decorate] (2*\j,1) -- (2*\j+1,1);
		\draw[decorate] (2*\j+1,0) -- (2*\j+1,1);
		\draw[decorate] (2*\j,1) -- (2*\j,2);}
	\draw[decorate,line width=1.3pt] (-6.5,1)--(-7,1);
	\draw[decorate,line width=1.3pt] (-8,1)--(-7,1);
	\draw[decorate,line width=1.3pt] (-8,1)--(-8,2);
	\draw[decorate,line width=1.3pt] (-8,2)--(-6,2);
	\draw[decorate,line width=1.3pt] (-6,2)--(-6,1);
	\draw[decorate,line width=1.3pt] (-6,1)--(-5,1);
	\draw[decorate,line width=1.3pt] (-5,1)--(-5,0);
	\draw[decorate,line width=1.3pt] (-5,0)--(-3,0);
	\draw[decorate,line width=1.3pt] (-3,0)--(-1,0);
	\draw[decorate,line width=1.3pt] (-1,0)--(1,0);
	\draw[decorate,line width=1.3pt] (1,0)--(1,1);
	\draw[decorate,line width=1.3pt] (1,1)--(2,1);
	\draw[decorate,line width=1.3pt] (2,1)--(2,2);
	\draw[decorate,line width=1.3pt] (0,2)--(2,2);
	\draw[decorate,line width=1.3pt] (-2,2)--(0,2);
	\draw[decorate,line width=1.3pt] (-4,2)--(-2,2);
	\draw[decorate,line width=1.3pt] (-4,1)--(-4,2);
	\draw[decorate,line width=1.3pt] (-4,1)--(-3,1);
	\draw[decorate,line width=1.3pt] (-3,1)--(-2,1);
	\draw[decorate,line width=1.3pt] (-2,1)--(-1,1);
	\draw[decorate,line width=1.3pt] (-1,1)--(0,1);
	\draw[decorate] (1,0) -- (2,0);
	\draw[decorate] (-7,0) -- (-5,0);
	\draw[decorate] (-6,2) -- (-4,2);
	\draw[decorate] (-5,1) -- (-4,1);
	\draw[decorate] (-8.5,2) -- (-8,2);
	\draw[decorate] (-8.5,1) -- (-8,1);
	\draw[decorate] (-7,1) -- (-6,1);
	\draw[decorate] (-8.5,0) -- (-7,0);
	\draw[decorate] (-7,0) -- (-7,1);
	\draw[decorate] (-8,1) -- (-8,2);
    \draw[decorate] (2,0) -- (2.5,0);
    \draw[decorate] (2,1) -- (2.5,1);
    \draw[decorate] (2,2) -- (2.5,2);
    \draw[decorate] (2,1) -- (2,2);
	\draw[-triangle 60] (-6.5,2) -- node[yshift=8pt,xshift=-3pt] {$a$} +(0.1,0);
	\draw[-triangle 60] (-5,0.5) -- node[xshift=-8pt,yshift=3pt] {$d$} +(0,-0.1);
	\draw[-triangle 60] (1,0.5) -- node[xshift=-8pt,yshift=-1pt] {$\bar d$} +(0,0.1);
	\draw[-triangle 60] (-3.95,0) -- node[yshift=-9pt,xshift=-3pt] {$e$} +(0.1,0);
	\draw[-triangle 60] (-1.95,0) -- node[yshift=-9pt,xshift=-3pt] {$e$} +(0.1,0);
	\draw[-triangle 60] (0.05,0) -- node[yshift=-9pt,xshift=-3pt] {$e$} +(0.1,0);
    \draw[-triangle 60] (1.55,1) -- node[yshift=9pt,xshift=-2pt] {$f$} +(0.1,0);
	\draw[-triangle 60] (2,1.5) -- node[xshift=8pt,yshift=-1pt] {$\bar b$} +(0,0.1);
	\draw[-triangle 60] (-2.95,2) -- node[yshift=8pt,xshift=2pt] {$g$} +(-0.1,0);
	\draw[-triangle 60] (-0.95,2) -- node[yshift=8pt,xshift=2pt] {$g$} +(-0.1,0);
	\draw[-triangle 60] (1.05,2) -- node[yshift=8pt,xshift=2pt] {$g$} +(-0.1,0);
	\draw[-triangle 60] (-1.45,1) -- node[yshift=8pt,xshift=-2pt] {$c$} +(0.1,0);
	\draw[-triangle 60] (-3.45,1) -- node[yshift=8pt,xshift=-2pt] {$c$} +(0.1,0);
    \draw[-triangle 60] (-0.45,1) -- node[yshift=9pt,xshift=-2pt] {$f$} +(0.1,0);
	\draw[-triangle 60] (-2.45,1) -- node[yshift=9pt,xshift=-2pt] {$f$} +(0.1,0);
	\draw[-triangle 60] (-4,1.5) -- node[xshift=-8pt,yshift=2pt] {$b$} +(0,-0.1);
	\draw[-triangle 60] (-6,1.5) -- node[xshift=-8pt,yshift=2pt] {$b$} +(0,-0.1);
	\draw[-triangle 60] (-5.45,1) -- node[yshift=8pt,xshift=-2pt] {$c$} +(0.1,0);
	\foreach \j in {-3,...,1}{
		\fill (2*\j,2) circle (1.5pt);
		\fill (2*\j-1,0) circle (1.5pt);
		\fill (2*\j,1) circle (1.5pt);
		\fill (2*\j-1,1) circle (1.5pt);}
	\fill (-8,2) circle (1.5pt);
	\fill (-8,1) circle (1.5pt);
	\fill (-8.01,1.5) circle (1.5pt);
    \fill (-6.5,1.01) circle (1.5pt);
    \fill (-7,2.01) circle (1.5pt);
    \node ( ) at (-6.95,1.55) {$o'$};
    \draw[->] (-7.15,1.5) -- (-7.9,1.5);
    \draw[->] (-6.9,1.4) -- (-6.6,1.1);
    \draw[->] (-7,1.65) -- (-7,1.9);
	\node ( ) at (2.7,0) {$\ray_3$};
	\node ( ) at (2.7,1) {$\ray_2$};
	\node ( ) at (2.7,2) {$\ray_1$};
	\node ( ) at (-5,-.65) {$z_r$};
	\node ( ) at (-5,-0.25) {$\uparrow$};
	\node ( ) at (-8,2.60) {$\tau^{-1} x_0$};
	\node ( ) at (-8,2.25) {$\downarrow$};
	\node ( ) at (-6,2.55) {$x_0$};
	\node ( ) at (-6,2.25) {$\downarrow$};
	\node ( ) at (-5,1.6) {$y_r$};
    \node ( ) at (-5,1.25) {$\downarrow$};
    \node ( ) at (-6,0.43) {$y_0$};
    \node ( ) at (-6,0.75) {$\uparrow$};
	\end{scope}
	\end{tikzpicture}
	\caption{Labelled subdivision of the 3-ladder}
	\label{fig:3ladder}
\end{figure}
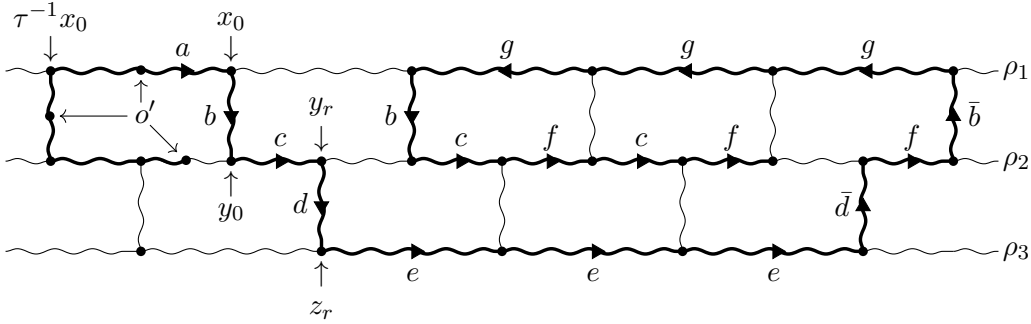

Again, Lemma \ref{lem:subgraph} applies to $X'$, and there is $o' \in V\!X'$ such 
that $L_{\SAW,o'}(X')$ is context-free. 

Up to possibly renumbering the rays, inverting their direction or exchanging
$\ray_1$ with $\ray_3\,$, we can assume without loss of generality that $o'$
lies on the ``rectangle'' with corners $x_0\,,\tau^{-1}x_0\,,\tau^{-1}y_0$ and $y_0\,$,
but not on the path $\pi_1$\,. 
In Figure \ref{fig:3ladder}, we  indicate the possible positions of $o'$.

We let $a \in \Si^*$ be the label of the self-avoiding walk starting at $o'$ and running around that rectangle in clockwise order up to $x_0\,$.
Write $b$ for the label of $\pi_1$ and $d$ for the label of $\pi_2$\,. 
Next, let $c$ and $f$ denote the labels of the subpaths of $\ray_2$ from $y_0$ to $y_r$ and from $y_r$ to $\tau y_0\,$, respectively.
Thereafter, $e$ denotes the label of the path from $z_r$ to $\tau z_r$ on $\ray_3$ and finally $g$ is the label of the path from $\tau x_0$ to $x_0$ on $\ray_1\,$. The automorphism $\tau$ is label preserving, any translates of the previous paths are also labelled by the same words.

Each of the words $a,b,\bar b, c,d, \bar d, e, f, g \in \Si^*$ is non-empty, but again, they are in general not just elements of $\Si$.

Similarly to the previous proposition, the language consisting of all words
\begin{equation}\label{eq:words2}
s=a\0 b\0 c\0 d\0 e^k\0 \bar d \0 f \0 \bar b\0 g^l\0 b\0 (cf)^m,\quad  k,l,m \in \N, 
\end{equation}
is regular, so that by the closure properties of context-free languages,
its intersection $\bar L$ with $L_{\SAW,o'}(X')$ is context-free.
Following the arrows in  Figure \ref{fig:3ladder}, one can see a self-avoiding 
walk with such a label, with $k=l=m+1=3$. In general, for a walk with label
$s$ as in \eqref{eq:words2} to be self-avoiding, one must have 
\begin{equation}\label{eq:3ladderineq}
k \ge l > m.
\end{equation}

Let $p$ be the pumping length of Lemma \ref{lem:pumplemcf} for $\bar L$,
and let $z = a\0 b\0 c\0 d\0 e^{p+1}\0 \bar d \0 f \0 \bar b\0 g^{p+1}\0 b\0 (cf)^p \in \bar L$.
In the decomposition $z = u\0v\0w\0\tilde v\0 \tilde u$ of the lemma,
$|v\0w\0\tilde v|\le p$ implies that 
$v\0w\0\tilde v$ is a subword of $a\0 b\0 c\0 d\0 e^{p+1}\0 \bar d\0 f\0 \bar b\0  g^{p+1}$
[Case 1], or of $g^{p+1}\0 b\0 (cf)^p$  [Case 2] (or both, meaning that it is contained in $g^{p+1}$.

In both cases, for any $n \ge 0$, there must be $k(n), l(n), m(n)$ such that
$$
z_n = u\0v^n\0w\0\tilde v^n\0 \tilde u 
= a\0 b\0 c\0 d\0 e^{k(n)}\0 \bar d \0 f \0 \bar b\0 g^{l(n)}\0 b\0 (cf)^{m(n)} \in \bar L.
$$
In \emph{Case 1}, $\tilde u$, and thus $z_n\,$, must end with $b \0(cf)^p$
Since the last symbol of $b$ and the last symbol of $f$ are labels of different edges ending at $y_0\,$, they have to be different. It follows that $m(n)=p$.
Using that $|z_0|<|z_1|$ we get that either $k(0)<k(1)=p+1$ or $l(0)<l(1)=p+1$, contradicting \eqref{eq:3ladderineq}. 

In \emph{Case 2}, we get in the same way that $k(n)=p+1$ for all $n$, and
either $l(2)>l(1)=p+1$ or $m(2)>m(1)=p$. Again this contradicts \eqref{eq:3ladderineq}.\end{proof}

We are now almost ready for the proof of Theorem \ref{thm:main-1}.
We will need Bass-Serre theory. As the topic cannot be briefly introduced, we will 
not give all definitions here. The reader is referred to 
{\sc Serre}~\cite{Se} and {\sc Dicks and Dunwoody}~\cite{DiDu}.

The ends of a finitely generated group are the ends of any of its Cayley
graphs with respect to a finite, symmetric set of generators. In fact,
they do not depend on the choice of the generating set; see Remark \ref{rmk:qi}.

A group  $\Gamma$ is called \emph{accessible} if it is the 
fundamental group of a finite graph of groups having finite edge groups and 
vertex groups which are finite or have one end. 
 The following lemma is \cite[Corollary IV.1.9]{DiDu}. 

\begin{lem}\label{lem:virtfree}
A group $\Gamma$ is the fundamental group of a finite graph of finite groups if and only 
if $\Gamma$ is finitely generated and virtually free. 
\end{lem}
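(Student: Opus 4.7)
The plan is to prove the two directions of the equivalence separately, using tools from Bass--Serre theory and the theory of ends of groups.

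For the forward direction, suppose $\Gamma = \pi_1(\mathcal{G})$ for a finite graph of finite groups $\mathcal{G}$. The standard presentation coming from Bass--Serre theory has finitely many generators (coming from the finite vertex groups together with one generator for each edge outside a spanning tree of the underlying graph), so $\Gamma$ is finitely generated. Bass--Serre theory also produces the Bass--Serre covering tree $T$, on which $\Gamma$ acts cocompactly with vertex and edge stabilizers conjugate to the (finite) vertex and edge groups of $\mathcal{G}$. I would then invoke the fact that a finitely generated group acting cocompactly on a tree with finite stabilizers contains a finite-index subgroup acting freely on the tree; such a subgroup is free by the Nielsen--Schreier theorem (equivalently, Serre's theorem that groups acting freely on trees are free). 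Hence $\Gamma$ is virtually free.

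For the reverse direction, let $\Gamma$ be finitely generated and virtually free. Virtually free groups are finitely presented, so Dunwoody's accessibility theorem will be available. If $\Gamma$ is finite there is nothing to prove. If $\Gamma$ is infinite, then $\Gamma$ cannot have exactly one end: a virtually free group has the same end space as any finite-index free subgroup, and free groups have $0$, $2$, or a Cantor set of ends. Hence $\Gamma$ has more than one end, and Stallings' theorem on ends of groups produces a non-trivial splitting of $\Gamma$ as an amalgamated product or HNN extension over a finite subgroup. The factors in this splitting are themselves finitely generated and virtually free (e.g., by intersecting with a free subgroup of finite index in $\Gamma$ and applying Nielsen--Schreier), so the argument can be iterated on each factor.

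The main obstacle is to ensure that this iterative splitting process terminates after finitely many steps, giving a \emph{finite} graph of groups rather than an infinite one. This is precisely the content of Dunwoody's accessibility theorem for finitely presented groups, which guarantees the existence of a graph-of-groups decomposition with finite edge groups whose vertex groups have at most one end. Applied to $\Gamma$, the terminal vertex groups are virtually free (as subgroups of $\Gamma$) and have at most one end, and the observation above forces them to be finite. This yields the decomposition of $\Gamma$ as the fundamental group of a finite graph of finite groups.
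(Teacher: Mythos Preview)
Your plan is sound, but be aware that the paper does not give its own proof of this lemma: it is quoted verbatim as \cite[Corollary IV.1.9]{DiDu} (Dicks--Dunwoody), with no argument supplied. So there is nothing to compare against beyond noting that your sketch reconstructs the standard route to that corollary.

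On the content of your sketch: the backward direction is exactly right---finite presentability of virtually free groups, Stallings' theorem, and Dunwoody's accessibility combine to give a finite graph of groups with finite edge groups and vertex groups having at most one end, and your end-count argument correctly forces those vertex groups to be finite. For the forward direction, the step you ``invoke''---that a group acting cocompactly on a tree with finite vertex stabilisers has a finite-index subgroup acting freely---is true but is itself a theorem of roughly the same depth as the lemma (essentially the Karrass--Pietrowski--Solitar result). If you want the sketch to be self-contained rather than a reduction to an equivalent statement, you should indicate how that finite-index torsion-free subgroup is produced, for instance via residual finiteness of fundamental groups of finite graphs of finite groups together with the uniform bound on the orders of finite subgroups.
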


A locally finite graph $X$ is called \emph{accessible}, if there is an integer $k$ 
such that any two ends of $X$ can be separated by a set containing $k$ or fewer vertices. 
{\sc Thomassen and Woess}~\cite{ThWo} showed that a connected, locally finite, 
transitive graph $X$ is accessible if and only if there is an integer $M$ 
such that each thin end of $X$ has size at most $M$. Moreover, they also proved 
that a finitely generated group is accessible if and only if some (and therefore 
all) of its Cayley-graphs are accessible.

\begin{proof}[\bf Proof of Theorem \ref{thm:main-1}]
First, $X$ cannot be one-ended. Indeed, in that case, that end has to be thick.
If $\Gamma=\AUT(X,\ell)$ has only elliptic elements, then $L_{\SAW}$ is not context-free by Lemma
\ref{lem:torsion}. Otherwise, $\Gamma$ has parabolic elements, and Lemma \ref{lem:parabolic}
combined with Proposition \ref{pro:noncf} implies as well that  $L_{\SAW}$ is not context-free.

Thus, $X$ has more than one end, whence there are thin ends. If $L_{\SAW}$ is context-free
then by Lemma \ref{lem:size} and Proposition \ref{pro:noncf} all thin ends have size at most $2$. We need to prove that there are no thick ends.

Recall from Lemma \ref{lem:fingen} and its proof that in the graph $X^{2D+1}$,
the orbit of $\Gamma o$ induces a Cayley graph $X(\Gamma, S)$ of the finitely generated 
group $\Gamma$. The identity mapping $\iota: \Gamma o \hookrightarrow V\!X$ induces 
a quasi-isometric embedding of $X(\Gamma,S)$ into $X$. Indeed, it is bi-Lipschitz 
and quasi-surjective, i.e., $B=0$ in \eqref{eq:qi}. Consequently, by Corollary
\ref{cor:qi}, all thin ends of $X(\Gamma,S)$ have size at most $\eta(2)$.
By \cite{ThWo}, the group $\Gamma$ is accessible. Thus, it is the fundamental group of a 
finite graph of finitely generated (sub)groups which are finite or one-ended.
If all of them are finite, then by Lemma \ref{lem:virtfree}, $\Gamma$ is virtually
free, so that it only has thin ends since $\Gamma$ is quasi-isometric with a tree.

Thus, if $\Gamma$ has a thick end, then it must have a finitely generated subgroup
$\Gamma_1$ which has one (thick) end.
Above, we have identified $\Gamma$ with the (vertex set of) $\Gamma o$ in $X$,
and $\Gamma_1 o$ is contained in that orbit. Under this identification, the group unit
corresponds to the ``root'' vertex $o$.
Let $S_1$ be a finite, symmetric set of generators of  $\Gamma_1$. Then for each 
$s \in S$ there is a (shortest)
path $\pi_s$ in $X$ from $o$ to the image $s\0 o$. We can choose these paths such
that $\pi_{s^{-1}}= s^{-1}\check \pi_s\,$, where $\check \pi_s$ is the reversal of $\pi_s\,$.
Let $V\pi_s$ and $E\pi_s$ be the sets of vertices and edges of $\pi_s\,$, respectively. Then we consider the subgraph $X_1$ of $X$ with
$$
V\!X_1= \bigcup_{\gamma \in \Gamma_1\0, s \in S} \gamma(V\pi_s) \AND
E\!X_1= \bigcup_{\gamma \in \Gamma_1\0, s \in S} \gamma(E\pi_s). 
$$
Clearly, $X_1$ is a connected subgraph of $X$ which inherits the labels from
the edges of $X$. Also, $X_1$ is quasi-isometric with the Cayley graph $X(\Gamma_1,S_1)$. 
Indeed, the embedding $\Gamma_1 \hookrightarrow V\!X_1\,$, $\gamma \mapsto \gamma o$
is bi-Lipschitz and quasi-surjective, i.e., $B=0$ in \eqref{eq:qi}. 

Therefore, $X_1$ has one end, which has to be thick, and it is quasi-transitive under
$\Gamma_1\,$. But then we are back to the situation of the beginning of this proof,
that is, $L_{\SAW,o}(X_1)$ cannot be context-free. But this contradicts Lemma 
\ref{lem:subgraph}. 

We conclude that $\Gamma$ and thus also $X$ have no thick ends.
\end{proof}

\section{Graphs with context-free language of SAWs}\label{sec:cf}
 The goal of this section is to prove the second half of Theorem \ref{thm:main}, namely
 
\begin{thm}\label{thm:main-2}
Let $X$ be a connected, locally finite, deterministically edge-labelled graph on which 
$\AUT(X,\ell)$ acts quasi-transitively. Then for every vertex $o$ of $X$ the following holds:
\begin{enumerate} \setlength\itemsep{0pt}
\item If all ends of $X$ have size 1, then $L_{SAW,o}$ is regular.
\item If all ends of $X$ have size at most 2, then $L_{SAW,o}$ is unambiguous context-free.
\end{enumerate}
\end{thm}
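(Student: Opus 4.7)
The plan is to handle the three cases outlined in the introduction: all ends of size $1$ (which gives part (1)), all ends of size exactly $2$ with $X$ itself $2$-connected, and finally the mixed case where both sizes $1$ and $2$ occur. In every case the key is a graph-of-graphs decomposition whose decomposition tree is a genuine tree, so that a self-avoiding walk in $X$ cannot re-enter a piece after leaving it through a separator.

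\textbf{Case (1).} I would first show that every block of $X$ is finite. An infinite $2$-connected subgraph $B$ contains two disjoint rays tending to a common end of $B$ (Menger's theorem in end form); these rays remain disjoint and equivalent in $X$, producing an end of $X$ of size at least $2$, a contradiction. Quasi-transitivity then bounds block sizes and restricts them to finitely many labelled isomorphism types. Because the cut-vertex tree $\mathcal{T}$ of $X$ is a tree and a SAW cannot re-enter a block after exiting through a cut-vertex (that cut-vertex would have to be revisited), a SAW in $X$ traces an injective walk in $\mathcal{T}$. This yields a finite automaton for $L_{\SAW,o}$ whose states record the current block type, the entry vertex into that block, and the set of vertices of the block already visited; transitions either extend the in-block SAW one labelled edge at a time, or leap to a neighbouring block through a cut-vertex; and every state is accepting. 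The state space is finite, so $L_{\SAW,o}$ is regular.

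\textbf{Core of case (2): $X$ is $2$-connected and all ends have size $2$.} I would apply the 3-block tree decomposition of Droms--Servatius--Servatius~\cite{DSS} to $X$. An infinite $3$-connected quasi-transitive graph has all ends of size at least $3$, so under our hypothesis all $3$-connected pieces of the decomposition are finite (and come in finitely many labelled types by quasi-transitivity), while the infinite pieces are ``cycle-like'' and carry a linear translation structure analogous to the strips of Section~\ref{sec:ladders}. Each $2$-vertex cut of the decomposition provides two vertices (the ``ports'') through which any SAW must pass when transiting between the two sides of the cut. The set of \emph{labels} of SAW-subpaths inside a single 3-block joining a prescribed pair of ports is finite. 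I would then write a grammar whose non-terminals index (3-block type, port-pair, side on which the continuation lies), and whose productions enumerate the possible SAW-shapes relative to a $2$-vertex cut: staying on one side, crossing the cut once through one port, or crossing through one port and returning through the other (the last case is the only one that requires two non-terminals in a single production, reflecting the nested structure of the excursion on the far side). The resulting grammar is context-free. Unambiguity follows from the deterministic labelling: each $w \in L_{\SAW,o}$ is the label of a unique SAW in $X$, which induces a canonical traversal of the 3-block tree and hence a unique rightmost derivation.

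\textbf{Mixed case and main obstacle.} In the full generality of part (2), the block-cut tree of $X$ has both finite blocks (covered by case (1)) and infinite $2$-connected blocks with all ends of size $2$ (covered by the preceding step). Substituting the unambiguous context-free grammars of the infinite blocks into the finite-state apparatus of case (1) as non-terminals representing SAW-fragments traversing those blocks yields an unambiguous context-free grammar for $L_{\SAW,o}$. The main obstacle is the middle step: one must pin down the precise structure of the 3-block tree of an infinite $2$-connected quasi-transitive graph whose ends all have size $2$, and verify that the grammar built from it is both context-free (rather than broader) \emph{and} unambiguous; the latter is where the deterministic labelling hypothesis is essential.
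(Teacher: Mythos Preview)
Your overall architecture---block--cut tree for the size-$1$ case, the Droms--Servatius--Servatius $3$-block tree for $2$-connected graphs, then substituting the latter into the former---is exactly what the paper does. But your sketch of the $2$-connected case contains an internal inconsistency that you should fix.

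You write that the $3$-connected pieces are finite ``while the infinite pieces are `cycle-like' and carry a linear translation structure.'' In the DSS decomposition a $3$-block is a multilink, a \emph{closed} path (hence finite), or a $3$-connected graph; there are no infinite cycle pieces. So once the $3$-connected pieces are shown finite (and this needs no quasi-transitivity of the piece: any infinite $3$-connected graph has an end, necessarily of size $\ge 3$, and via property~(a) of the $3$-block tree one replaces virtual edges by paths in $X$ to get an end of $X$ of size $\ge 3$), \emph{all} $3$-blocks are finite. This matters because your next sentence---that the set of labels of SAW-subpaths inside a single $3$-block between prescribed ports is finite---would fail if there really were infinite pieces. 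With all pieces finite the claim is immediate, and your grammar sketch goes through without any ``linear translation'' analysis.

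Two smaller points worth tightening. First, your non-terminal scheme ``(3-block type, port-pair, side)'' is not quite what is needed: the paper's variables $V^0/V^1$ record whether the \emph{second} vertex of the entry cut has already been used, and this bookkeeping is what guarantees self-avoidance and drives unambiguity. Second, in the mixed case you tacitly apply the $2$-connected result to each infinite block; for that you need that the setwise stabiliser of a block in $\AUT(X,\ell)$ acts quasi-transitively on it, which is a short but separate lemma (the paper's Lemma~\ref{lem:blocksqt}).
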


For an integer $k>0$ a graph $X$ is called $k$-connected if it has more than $k$ vertices and no set of less than $k$ vertices is a separating set in $X$. A \emph{(2-)block} in a graph $X$ is a maximal connected subgraph of $X$ containing no cutvertex. If $X$ is connected and has at least 2 vertices, every block of $X$ is either a pair of vertices connected by an edge or a 2-connected graph. The intersection of 2 different blocks of $X$ is either empty or a cutvertex in $X$. The \emph{block-cutvertex tree} $T_2(X)$ corresponding to $X$ is the graph having as vertices the blocks and the cutvertices of $X$, where a block is adjacent to every cutvertex it contains. Denote for an edge $e$ of $T_2(X)$ by $B(e)$ the block and by $c(e)$ the cutvertex incident to $e$. More about blocks and the block-cutvertex tree can be found in \cite{Tu}. \par


If $X$ is locally finite, every cutvertex of $X$ belongs to a finite number of blocks and therefore has finite degree in $T_2(X)$. On the other hand an infinite block of $X$ can contain infinitely many cutvertices, so $T_2(X)$ need not be locally finite.\par 
For any automorphism $\gamma \in \AUT(X)$, the image $\gamma B$ of a block $B$ of $X$ is again a block of $X$ and the same holds for cutvertices. Therefore, whenever a subgroup $\Gamma \le \AUT(X)$ acts quasi-transitively on $X$, it also acts quasi-transitively on the graph $T_2(X)$. \par

The following lemma shows that blocks of quasi-transitive graphs are always quasi-transitive graphs.

\begin{lem}\label{lem:blocksqt}
Let $X$ be a connected, locally finite graph and suppose $\Gamma \le \AUT(X)$ acts quasi-transitively on  $X$. Then for any block $B$ of $X$, the set-wise stabilizer $\Gamma_B$ of $B$ in $\Gamma$ acts quasi-transitively on $B$.
\end{lem}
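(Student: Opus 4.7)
The plan is to combine the quasi-transitivity of $\Gamma$ on $X$ with the elementary observation that in a locally finite graph, each vertex belongs to only finitely many blocks. I will show that for every $x \in V(B)$ there is an element of $\Gamma_B$ that moves $x$ into a fixed finite set of vertices.

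First I would fix a finite set $R=\{o_1,\dots,o_n\}$ of representatives of the $\Gamma$-orbits on $V\!X$. Next, for each representative $o_i$, I would note that since the edges incident to $o_i$ are partitioned among the blocks containing $o_i$ (two distinct blocks meeting in $o_i$ cannot share an edge), and $X$ is locally finite, there are only finitely many blocks $B_1^{(i)},\dots,B_{m_i}^{(i)}$ through $o_i$. For each pair $(i,j)$ such that $B_j^{(i)}$ lies in the $\Gamma$-orbit of the given block $B$, I would fix once and for all an element $\delta_j^{(i)}\in\Gamma$ with $\delta_j^{(i)}B_j^{(i)} = B$.

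Now, given an arbitrary $x\in V(B)$, choose $\gamma\in\Gamma$ and $i$ with $\gamma x = o_i$. Then $\gamma B$ is a block containing $o_i$, hence equals some $B_j^{(i)}$, and this block is by construction in the orbit of $B$, so $\delta_j^{(i)}$ is defined. The element $\eta := \delta_j^{(i)}\gamma$ satisfies $\eta B = B$, that is $\eta\in\Gamma_B$, and sends $x$ to $\delta_j^{(i)}o_i$. Since the target vertex $\delta_j^{(i)}o_i$ depends only on the finitely many pairs $(i,j)$, the stabilizer $\Gamma_B$ has at most $\sum_i m_i$ orbits on $V(B)$, proving quasi-transitivity.

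I do not expect a serious obstacle here. The one thing that requires a moment of care is the finiteness of the number of blocks through a given vertex; this is what forces the use of local finiteness and is what makes the two-step ``first map the vertex, then correct the block'' argument terminate in a finite list of targets. Everything else is a standard orbit-counting manipulation with the set-wise stabilizer.
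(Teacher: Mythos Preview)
Your argument is correct. The paper takes a slightly different and shorter route: instead of working with vertices, it works with edges. Since $\Gamma$ acts quasi-transitively on $V\!X$ and $X$ is locally finite, $\Gamma$ has only finitely many orbits on $E\!X$, hence on $E\!B$. The key observation is then that an edge lies in a \emph{unique} block, so any $\gamma\in\Gamma$ mapping an edge of $B$ to another edge of $B$ automatically fixes $B$ setwise; thus the $\Gamma$-orbits on $E\!B$ coincide with the $\Gamma_B$-orbits on $E\!B$, and finiteness transfers immediately. Your approach stays with vertices, which can lie in several blocks, and this is exactly why you need the two-step ``map the vertex, then correct the block'' manoeuvre. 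Both proofs use local finiteness in the same place (to bound the number of edges, resp.\ blocks, at a vertex); the paper's version avoids the explicit correcting elements $\delta_j^{(i)}$ at the price of the small extra remark that finitely many edge-orbits gives finitely many vertex-orbits on $B$, while yours produces an explicit bound $\sum_i m_i$ on the number of $\Gamma_B$-orbits.
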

\begin{proof}
$\Gamma$ acts quasi-transitively on $X$, so it acts with finitely many orbits on $E\!B$, i.e., the set $\{\Gamma e \mid e \in E\!B\}$ is finite. But every $\gamma \in \Gamma$ mapping some edge of $B$ onto another edge of $B$ clearly fixes the block $B$ and is therefore also contained in $\Gamma_B$. This implies that also $\Gamma_B$ acts with finitely many orbits on $E\!B$ and thus quasi-transitively on $B$.
\end{proof}

The following lemma is a simple consequence of the fact that a block cannot contain ends of size 1, because defining sequences of such ends consist of cutvertices.

\begin{lem}\label{lem:blocksfinite}
Let $X$ be a locally finite graph such that all ends of $X$ are of size 1. Then every block of $X$ is finite.
\end{lem}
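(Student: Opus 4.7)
My plan is to argue by contradiction. Suppose some block $B$ of $X$ is infinite. By the dichotomy recalled just before the lemma, an infinite block cannot consist merely of two vertices joined by an edge, so $B$ must be 2-connected. As $X$ is locally finite, so is $B$, and being infinite and connected, $B$ contains a ray by König's lemma, hence has at least one end $\omega_B$ in its own right.

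The first key step is to observe that \emph{no} end of a 2-connected graph can have size $1$ in that graph. Indeed, a defining sequence $(\{x_n\})_{n \ge 0}$ of singletons for $\omega_B$ would force the components $C_B(\{x_n\},\omega_B)$ to shrink strictly, so each $x_n$ would have to be a cutvertex of $B$, contradicting 2-connectedness. Consequently $\omega_B$ has size at least $2$ in $B$, and by the Menger-type consequence recalled in \S\ref{sec:basics}, there are two vertex-disjoint rays $\rho_1,\rho_2$ in $B$ representing $\omega_B$.

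The second step is to transfer the two disjoint rays from $B$ to $X$. Given any finite $K \subset V\!X$, the set $K \cap V\!B$ is finite, so by equivalence in $B$ both rays eventually lie in a single component $C$ of $B \setminus (K \cap V\!B)$. Since $C$ is a connected subgraph of $X \setminus K$, it is contained in one component of $X \setminus K$, and hence $\rho_1$ and $\rho_2$ end up in the same component there too. Therefore $\rho_1$ and $\rho_2$ are equivalent as rays in $X$ and represent a common end $\omega$ of $X$; since they are vertex-disjoint, $\omega$ has size at least $2$ in $X$, contradicting the hypothesis that every end of $X$ has size $1$.

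I expect no real obstacle; the only subtle point is making sure that the notion of end-size computed inside the subgraph $B$ gives something useful in the ambient graph $X$, which is precisely what the transfer in the third paragraph handles.
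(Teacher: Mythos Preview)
Your proof is correct and follows essentially the same approach as the paper, which only supplies a one-line hint (``a block cannot contain ends of size~1, because defining sequences of such ends consist of cutvertices''). You have simply spelled out the details the paper leaves implicit, in particular the transfer step showing that two disjoint rays in the block remain equivalent in the ambient graph~$X$.
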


In the case where the graph $X$ has infinite blocks, we want to further decompose them. There are different ways to do this. One natural way is to use 3-block-decompositions, first introduced by {\sc Tutte} (see \cite{Tu}) for finite graphs and then generalized to infinite graphs by {\sc Droms, B. Servatius and H. Servatius} in \cite{DSS}. In this theory, sometimes graphs may have multiple edges between a single pair of vertices, we will call them multi-graphs. \par

Let $X$ and $Y$ be two multi-graphs and let $e \in E\!X$ and $f \in EY$ be (directed) edges. The edge amalgam of $X$ and $Y$ along $e$ and $f$ is the graph $Z$ obtained from the disjoint union of $X$ and $Y$ by identifying the vertices $e^-$ with $f^-$, $e^+$ with $f^+$ and erasing the edges $e$ and $f$. A convenient way to represent a sequence of edge amalgamations of a (not necessarily finite) set of multi-graphs is the edge-amalgam tree $T$. Vertices of $T$ are the multi-graphs used in the amalgamation. For clarity we will denote by $A(\alpha)$ the multi-graph corresponding to the vertex $\alpha$ of $T$. Two vertices $\alpha$ and $\beta$ are connected by an undirected edge in $T$, if $A(\alpha)$ and $A(\beta)$ are amalgamated along some edges $e \in E\!A(\alpha)$ and $f \in E\!A(\beta)$. We additionally introduce a label function $\lambda$ assigning to every directed edge $a \in ET$ the edge $e$ of $A(a^-)$ used in the amalgamation of $A(a^-)$ and $A(a^+)$. For $\alpha \in VT$ an edge of  $A(\alpha)$ is called \emph{virtual}, if it is the label of some edge of $T$, otherwise it is called \emph{non-virtual}. Denote the resulting multi-graph obtained from a sequence of edge amalgamations given by an amalgamation tree $T$ by $A(T)$. Then virtual edges disappear during the progress, while non-virtual edges are still present in $A(T)$.\par

A \emph{multilink} is a multi-graph consisting of 2 vertices and a (finite) positive number of undirected edges between these vertices. A multi-graph is said to be a \emph{3-block} if it contains at least 3 edges and is either a cycle (closed path), a multilink or a locally finite 3-connected graph. 
An edge-amalgam tree $T$ is called a \emph{3-block tree} if for every $\alpha \in VT$, $A(\alpha)$ is a 3-block and additionally for every edge $a \in ET$ the corresponding graphs $A(a^-)$, $A(a^+)$ are neither both multilinks nor both cycles. Note that in general, 3-block trees need not be locally finite. This is due to the fact that there can be infinite 3-blocks and therefore also infinitely many amalgamations of a single 3-block. \par

\begin{thm}[{{\sc Droms, B. Servatius, H. Servatius} \cite[Thm. 1]{DSS}}] \label{thm:3blocktree}
	For any locally finite, 2-connected graph $X$ there is a unique 3-block tree $T$ such that $X=A(T)$.
\end{thm}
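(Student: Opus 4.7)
The plan is to follow Tutte's 3-block decomposition for finite graphs and adapt it to the locally finite setting. The central notion is that of a \emph{$2$-separation} of $X$: a pair $\{u,v\}\subset V\!X$ whose removal disconnects $X$, together with a partition of the components of $X\setminus\{u,v\}$ into two nonempty classes $V_1,V_2$. To such a $2$-separation one assigns a splitting of $X$ into two multi-graphs $X_i$ ($i=1,2$) on vertex set $V_i\cup\{u,v\}$, each inheriting its edges from $X$ and carrying one additional virtual edge between $u$ and $v$. These two virtual edges are paired, so that edge-amalgamating $X_1$ and $X_2$ along them recovers $X$.

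For existence, I would call two $2$-separations \emph{compatible} when the side of one is entirely contained in one side of the other (so their splits can be performed simultaneously), and show that the set of all $2$-separations admits a maximal family $\mathcal F$ of pairwise compatible representatives. Performing all splits in $\mathcal F$ simultaneously produces a family of multi-graphs, which become the vertices of a graph $T$ whose edges are the matched pairs of virtual edges. Using $2$-connectedness of $X$ and a Helly-type nesting property of compatible $2$-separations, $T$ is shown to be a tree. One then argues that each piece is a $3$-block: if a piece were not $3$-connected, not a cycle, and not a multilink, it would admit a further $2$-separation, and one checks that lifting this separation back to $X$ yields a $2$-separation of $X$ compatible with all members of $\mathcal F$, contradicting maximality. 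Finally, whenever adjacent pieces are both cycles (resp.\ both multilinks), merging them along their common virtual edge pair yields a larger cycle (resp.\ multilink); iterating this merging operation enforces the required alternation condition.

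For uniqueness, I would take a second $3$-block tree $T'$ with $A(T')=X$. Each edge of $T'$ corresponds, via its pair of virtual edges, to a $2$-separation of $X$, and the alternation condition forces these separations to be exactly the essential ones, i.e., precisely the maximal compatible family $\mathcal F$. Hence the virtual-edge structure of $T'$ matches that of $T$, and the labelled tree isomorphism $T'\cong T$ follows.

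The main obstacle is the transition from finite to locally finite graphs. In the finite case the splitting process terminates trivially, whereas here infinitely many pairwise compatible $2$-separations may coexist, and an infinite $3$-connected piece can only appear as a ``limit'' block. The delicate points are (i) verifying that the collection of pieces together with virtual-edge incidence really forms a tree with no pathological infinite chains, and (ii) ensuring that $A(T)=X$ in the sense that every edge of $X$ survives as a non-virtual edge of exactly one piece. Both are handled by exploiting local finiteness: each vertex $v\in V\!X$ has finite degree, so it lies in only finitely many inequivalent $2$-separations $\{v,w\}$, and hence in only finitely many pieces meeting any finite edge-set around $v$. This makes the amalgamation unambiguous and lets one reconstruct $X$ from $T$ by a transfinite but controlled limit.
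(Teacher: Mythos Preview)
The paper does not contain a proof of this statement: Theorem~\ref{thm:3blocktree} is quoted from \cite[Thm.~1]{DSS} and used as a black box. After stating it, the authors only record two consequences of the construction in \cite{DSS} (the properties labelled (a) and (b) following the theorem) and note that the decomposition is canonical, so that automorphisms of $X$ act on $T_3(X)$. There is therefore no proof in this paper to compare your proposal against.

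For what it is worth, your outline is the expected one --- Tutte's 2-separation/hinge decomposition transported from finite to locally finite graphs, which is precisely the programme carried out in \cite{DSS}. The finiteness step you isolate at the end (that each vertex lies in only finitely many 2-separations) is correct but not as immediate as ``$v$ has finite degree'' makes it sound; it is the case $k=2$ of the fact that a locally finite graph has only finitely many tight $k$-cuts through a given vertex, a result the present paper itself invokes elsewhere as \cite[Prop.~4.2]{ThWo}. With that ingredient supplied, your sketch is a faithful summary of how the theorem is established.
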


For a given 2-connected graph $X$ the unique 3-block tree given by the above theorem will henceforth be denoted by $T_3(X)$. The proof of the theorem is constructive and allows us to ``decompose" $X$ in a unique way into (possibly infinitely many) 3-blocks, such that $X$ is obtained from amalgamating these 3-blocks as given by $T_3(X)$. The set of vertices and the set of non-virtual edges of each 3-block will be considered as a subsets of $V\!X$ and $E\!X$, respectively. A single vertex of $X$ may appear in different blocks. \par

$T_3(X)$ has the following 2 properties:
\begin{enumerate}\setlength\itemsep{0pt}
	\item[(a)] For every virtual edge $e=\lambda(a)$, $a \in ET_3(X)$, there is a finite sub-tree $T'$ of $T_3(X)$ containing $a^-$ but not $a^+$ and a path in $A(T')$ connecting the endpoints of $e$ and consisting of edges of $X$.
	\item[(b)] Let $\alpha \in VT_3(X)$ and $v \in V\!A(\alpha)$. Then $\alpha$ is contained in a finite sub-tree $T'$ of $T_3(X)$ such that all edges of $A(T')$ incident to $v$ are edges of $X$.
\end{enumerate} 

Due to the uniqueness of the decomposition, symmetries on the graph $X$ carry over to $T$ in a canonical way. Moreover, as in the case of 2-blocks, any $\Gamma \le \AUT(X)$ acting quasi-transitively on $X$ also acts quasi-transitively on $T_3(X)$.

\begin{lem}\label{lem:3blocksfinite}
Let $X$ be a simple, locally finite, 2-connected graph such that all ends of $X$ are of size at most 2.  Then every 3-block of $X$ is finite.
\end{lem}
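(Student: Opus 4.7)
The plan is to argue by contradiction: assume some $3$-block $B=A(\alpha)$ of $X$ is infinite, and produce an end of $X$ of size at least $3$, in conflict with the hypothesis.

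Since $3$-blocks that are cycles or multilinks are finite by definition, $B$ must be a locally finite, infinite, $3$-connected graph. Any defining sequence $(K_n)$ for an end of $B$ with $|K_n|<3$ would furnish a separating set of $B$ of size less than $3$, contradicting $3$-connectedness; hence every end of $B$ has size at least $3$. Being infinite, locally finite and connected, $B$ has at least one end, and by the characterization of end size through disjoint rays recalled in \S\ref{sec:basics}, this end contains three pairwise disjoint rays $\rho_1,\rho_2,\rho_3$ in $B$.

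The next step is to lift $\rho_1,\rho_2,\rho_3$ to three pairwise disjoint rays $\tilde\rho_1,\tilde\rho_2,\tilde\rho_3$ in $X$. Using property (a) of the $3$-block tree together with the standard fact that for every virtual edge $e=\lambda(a)\in E\!B$ the pertinent subgraph attached to $B$ along $a$ is a connected subgraph of $X$ meeting $V\!B$ only in the two poles of $e$, we obtain, for each virtual edge $e$ of $B$, a path $\pi_e$ in $X$ from one pole of $e$ to the other whose interior lies in $V\!X\setminus V\!B$. Because each vertex of $V\!X\setminus V\!B$ belongs to a unique branch of $T_3(X)\setminus\{\alpha\}$, the interiors of $\pi_e$ and $\pi_{e'}$ are disjoint whenever $e\neq e'$. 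Replacing each virtual edge traversed by $\rho_i$ by the corresponding $\pi_e$ therefore yields a self-avoiding ray $\tilde\rho_i$ in $X$, and the three resulting rays are pairwise disjoint since the $\rho_i$'s are disjoint in $B$ and the $\pi_e$-interiors are disjoint both from $V\!B$ and from one another.

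The main obstacle is to show that $\tilde\rho_1,\tilde\rho_2,\tilde\rho_3$ all lie in the same end of $X$, since that end would then have size at least $3$. The subtlety is that a finite $K\subset V\!X$ hitting the interiors of some expansion paths $\pi_e$ could separate the $\tilde\rho_i$ in $X$ without affecting $B$ directly. The fix is to absorb the damaged expansions into an enlarged separator on the $B$-side: set
\[
K' \;=\; (K\cap V\!B)\;\cup\;\bigl\{\,\text{endpoints of virtual edges }e\text{ with }V\pi_e\cap K\neq\emptyset\,\bigr\}.
\]
This $K'$ is finite because $K$ is finite and each vertex outside $V\!B$ lies interior to at most one $\pi_e$. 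Since $\rho_1,\rho_2,\rho_3$ share an end of $B$, for any two of them the far tails lie in the same component of $B\setminus K'$, joined by a path in $B$ avoiding $K'$; expanding this path through the chosen $\pi_e$'s produces a path in $X$ avoiding all of $K$ and joining tails of $\tilde\rho_i$ to tails of $\tilde\rho_j$. Hence all three rays lie in a common end $\omega$ of $X$, and $\omega$ has size at least $3$, the desired contradiction.
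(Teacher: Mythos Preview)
Your proof is correct and follows essentially the same approach as the paper: assume an infinite $3$-block, use $3$-connectedness to find an end of size $\ge 3$ containing three disjoint rays, then replace virtual edges by paths in $X$ via property~(a) of $T_3(X)$ to obtain three disjoint rays in a common end of $X$. The paper merely asserts that the lifted rays are disjoint and lie in the same end; you supply the details (disjointness of the $\pi_e$-interiors via the branch structure of $T_3(X)\setminus\{\alpha\}$, and the $K'$-enlargement argument for equivalence of the rays), which makes your write-up more complete but not methodologically different.
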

\begin{proof}
Clearly, multilinks and cycles are finite. Every end of a 3-connected graph must be of size at least 3, because every defining sequence of an end consists of separating sets.\par
Assume that there is an infinite 3-block $A$ of $X$. Then it contains an end $\omega$ of size at least 3 and this end contains 3 disjoint rays. By property (a) of $T_3(X)$, we can replace all virtual edges contained in the rays by finite paths consisting of non-virtual edges of $X$ which are not in $A$. We obtain 3 disjoint rays in $X$, which belong to the same end of $X$. This is a contradiction to the assumption that all ends of $X$ have size at most 2. 
\end{proof}

\begin{thm}\label{lem:2conlangcf}
Let $X$ be a 2-connected, locally finite, deterministically edge labelled graph and let $\Gamma \le \AUT(X,\ell)$ act quasi-transitively on $X$. If every end of $X$ is of size at most 2, then for every $o \in V\!X$, the language $L_{SAW,o}$ is unambiguous context-free.
\end{thm}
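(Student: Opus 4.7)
My plan is to construct an unambiguous context-free grammar generating $L_{SAW,o}$ out of the 3-block tree $T_3(X)$ supplied by Theorem \ref{thm:3blocktree}. By Lemma \ref{lem:3blocksfinite} every 3-block is finite, and since $\Gamma = \AUT(X,\ell)$ acts on $T_3(X)$ quasi-transitively while preserving edge labels, there are only finitely many $\Gamma$-orbits of configurations $(\vec e, u)$ where $\vec e$ is an oriented edge of $T_3(X)$ and $u$ is an endpoint of the virtual edge $\lambda(\vec e)$. This finiteness is what will make the grammar finite.

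The first step is the key structural decomposition. For each oriented edge $\vec e = (\beta,\alpha)$ of $T_3(X)$ let $X(\vec e) \subseteq X$ be the subgraph spanned by the non-virtual edges of those 3-blocks $A(\gamma)$ for which $\gamma$ lies on the $\alpha$-side of $\vec e$. For a SAW $\pi$ in $X(\vec e)$ starting at an endpoint $u$ of $\lambda(\vec e)$, I would record at each visited 3-block $A(\gamma)$ its trace $\pi_\gamma$: the SAW in the multigraph $A(\gamma)$ (both virtual and non-virtual edges allowed) whose non-virtual edges are exactly the edges of $A(\gamma)$ used by $\pi$, and whose virtual edges mark excursions of $\pi$ into the child subtrees through them. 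Self-avoidance of $\pi$ forces two essential facts: each child subtree is entered at most once, and an excursion through a virtual edge $\{w,w'\}$ either terminates inside that child subtree or exits at $w'$, after which neither $w$ nor $w'$ may be revisited. These properties make the decomposition recursive along $T_3(X)$.

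Next I would introduce two families of variables: $N_{\vec e, u, v}$ generating labels of SAWs in $X(\vec e)$ running between the two endpoints $u$ and $v$ of $\lambda(\vec e)$, and $N_{\vec e, u, \mathrm{end}}$ generating labels of SAWs in $X(\vec e)$ starting at $u$ and terminating inside. For every SAW $\sigma$ in the finite multigraph $A(\alpha)$ starting at $u$ and satisfying the required terminal condition, I add one production rule: emit the labels of the non-virtual edges of $\sigma$ in order, and at each virtual edge $\lambda(\vec f)$ traversed by $\sigma$ insert the variable $N_{\vec f,\,\cdot\,,\,\cdot\,}$ with the correct entry endpoint; in the end-variant, optionally append one $N_{\vec f,\,\cdot\,, \mathrm{end}}$ at the terminal vertex of $\sigma$. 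The start symbol combines disjunctively all $\vec e$ with $o$ on $\lambda(\vec e)$, together with analogous rules for the 3-block containing $o$ not at a virtual-edge endpoint. Since $\Gamma$ induces label-preserving isomorphisms between the corresponding $X(\vec e)$'s, variables in a common $\Gamma$-orbit generate identical languages and may be identified, yielding a finite grammar. Unambiguity then follows from determinism of $\ell$: each word in $L_{SAW,o}$ traces a unique SAW in $X$, whose $T_3(X)$-decomposition is itself unique, so each word admits exactly one rightmost derivation.

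The main obstacle I expect is the careful verification that the grammar generates \emph{exactly} $L_{SAW,o}$. The inclusion that every SAW is generated follows by induction on the depth of its trace along $T_3(X)$; the subtler inclusion is that every word produced by the grammar is indeed the label of some globally self-avoiding walk. For this I must confirm that the two-endpoint-avoidance condition baked into each $N_{\vec e, u, v}$ is strong enough to exclude all possible vertex collisions between the trace in one 3-block and the excursions into adjacent ones, bearing in mind that a single vertex of $X$ may belong to several 3-blocks meeting at a common virtual-edge endpoint. Secondary points of care are defining the start symbol cleanly when $o$ belongs to several 3-blocks, and checking that the $\Gamma$-orbit identification is compatible with the choice of parent direction $\vec e$.
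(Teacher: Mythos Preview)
Your overall strategy---decomposing each SAW along the 3-block tree $T_3(X)$ and encoding the recursion by a context-free grammar indexed by $\Gamma$-orbits of pairs $(\vec e, u)$---is exactly the paper's approach. However, the two variable families you propose are not sufficient, and the grammar as you describe it overgenerates. The problem lies with the terminal variable $N_{\vec e, u, \mathrm{end}}$: when the trace $\sigma$ in the parent 3-block $A(\alpha)$ terminates at an endpoint $u$ of a child virtual edge $\lambda(\vec f) = \{u,v\}$ \emph{after having already visited $v$} (without traversing $\lambda(\vec f)$ itself), the terminal excursion into $X(\vec f)$ must avoid $v$; but if $\sigma$ has not visited $v$, that excursion is free to pass through $v$. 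A single variable $N_{\vec f, u, \mathrm{end}}$ cannot distinguish these two situations. Concretely, take a 3-connected 3-block $K_4$ on $\{a,b,c,d\}$ with incoming virtual edge $ab$ and outgoing virtual edge $cd$: the trace $a,c,b,d$ visits $c$ before terminating at $d$, so any continuation into the $cd$-child must avoid $c$, whereas the trace $a,d$ terminates at $d$ without visiting $c$, so the continuation may visit $c$. Your grammar produces both continuations from the same variable, hence words labelling walks that revisit $c$.

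The paper resolves this by splitting each terminal variable into two versions, $V_{a,u}^0$ and $V_{a,u}^1$, the latter additionally forbidding the second endpoint of $\lambda(a)$; the production rules then select the superscript according to whether the parent trace has already touched that vertex. You correctly flagged this region as ``the main obstacle'' but misdiagnosed where it lies: the delicate point is not the crossing variables $N_{\vec e, u, v}$ (those are fine, since both endpoints are visited exactly once) but the terminal ones. Once you add this single extra bit of state to $N_{\vec e, u, \mathrm{end}}$, the verification of correctness and unambiguity goes through along the lines you sketched, and your argument becomes essentially identical to the paper's.
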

\begin{proof}
Let $R$ be a set of representatives of the finite set of orbits $\Gamma \backslash ET_3(X)$ of directed edges of $T_3(X)$. For $a \in ET$ and a vertex $u \in \lambda(a)$ write $\bar{a}$ for the representative of $a$ in $R$ and $\bar{u}$ for the vertex in $\lambda(\bar{a})$ representing $u$.  Define for $a \in R$ and $u \in \lambda(a)$ the set $\Pi_{a,u}^0$ of self-avoiding walks in $A(a^+)$ starting at $u$ and not containing the virtual edge $\lambda(\check a)$. Let $\Pi_{a,u}^1$ be the subset of $\Pi_{a,u}^0$ consisting of all walks not containing the second vertex of $\lambda(a)$. Note that both sets are finite because $A(a^+)$ is finite. Fix some vertex $\alpha_o$ of $T_3(X)$ such that the 3-block $A(\alpha_o)$ contains the vertex $o$ of $V\!X$. We denote by $\Pi_o$ the set of self-avoiding walks in $A(\alpha_o)$ starting at $o$.

We extend the label function $\ell$ on $X$ to 3-blocks of $X$. Labels of non-virtual edges are inherited from $X$ and for any $a \in ET_3(X)$ label $e=\lambda(a)$ by $U_{\bar{a},\bar{e}^-}$ and $\check e$ by $U_{\bar{a},\bar{e}^+}\,$. The extended label function will be again denoted by $\ell$ and maps into $\Si \cup \Si'$, where  $\Si$ is the label alphabet of $X$ and $\Si'=\{U_{a,u} \mid a \in R,\; u \in \lambda(a)\}$.

We will now present a grammar $\Ccal=(\V,\Si,\Pb,S)$ generating the language of self-avoiding walks in $X$ starting at $o$. The finite set of variables is given by 
\begin{equation*}
\V= \{S\} \cup \Si' \cup \{V_{a,u}^i \mid a \in R, u \in \lambda(a), i \in \{0,1\}\}.
\end{equation*}
Let $a \in R$ and $u \in \lambda(a)$. Then for any directed edge $b \neq \check a$ in $ET_3(X)$ starting at $a^+$ the productions given below are contained in $\Pb$.
\begin{alignat*}{2}
	&V_{a,u}^0 \vdash \ell(\pi) \quad &&\parbox[t]{25em}{for every $\pi \in \Pi_{a,u}^0$ ending with a non-virtual edge,} \\[4pt]
	&V_{a,u}^0 \vdash \ell(\pi) V_{\bar{b},\bar{v}}^0 \quad &&\parbox[t]{25em}{for every $\pi \in \Pi_{a,u}^0$ ending at a vertex $v \in \lambda(b)$ and not containing the second vertex of $\lambda(b)$,} \\[4pt]
	&V_{a,u}^0 \vdash \ell(\pi) V_{\bar{b},\bar{v}}^1 \quad &&\parbox[t]{25em}{for every $\pi \in \Pi_{a,u}^0$ ending at a vertex $v \in \lambda(b)$ and containing the second vertex of $\lambda(b)$, but not $\lambda(b)$,} \\[4pt]
	&V_{a,u}^1 \vdash \ell(\pi) \quad &&\parbox[t]{25em}{for every $\pi \in \Pi_{a,u}^1$ ending with a non-virtual edge,} \\[4pt]
	&V_{a,u}^1 \vdash \ell(\pi) V_{\bar{b},\bar{v}}^0 \quad &&\parbox[t]{25em}{for every $\pi \in \Pi_{a,u}^1$ ending at a vertex $v \in \lambda(b)$ if the second vertex of $\lambda(b)$ is  neither contained in $\pi$ nor in $\lambda(a)$,} \\[4pt] 
	&V_{a,u}^1 \vdash \ell(\pi) V_{\bar{b},\bar{v}}^1 \quad &&\parbox[t]{25em}{for every $\pi \in \Pi_{a,u}^1$ ending at a vertex $v \in \lambda(b)$ if the second vertex of $\lambda(b)$ is either contained in $\pi$ or in $\lambda(a)$,} \\[4pt]
	&U_{a,u} \vdash \ell(\pi) \quad &&\parbox[t]{25em}{for every $\pi \in \Pi_{a,u}^0$ ending at the second vertex of $\lambda(a)$.}
\end{alignat*}
The set of productions $\Pb$ is completed by adding for every edge $b \in ET_3(X)$ starting at $\alpha_o$ the following rules.
\begin{alignat*}{2}
	&S \vdash \ell(\pi) \quad &&\parbox{25em}{for every $\pi \in \Pi_o$ ending with a non-virtual edge,} \\[4pt]
	&S \vdash \ell(\pi) V_{\bar{b},\bar{v}}^0 \quad &&\parbox[t]{25em}{for every $\pi \in \Pi_o$ ending at a vertex $v \in \lambda(b)$ and not containing the second vertex of $\lambda(b)$,} \\[4pt] 
	&S \vdash \ell(\pi) V_{\bar{b},\bar{v}}^1 \quad &&\parbox[t]{25em}{for every $\pi \in \Pi_o$ ending at a vertex $v \in \lambda(b)$ and containing the second vertex of $\lambda(b)$ but not $\lambda(b)$.} 
\end{alignat*}

We will now briefly discuss why the given grammar $\Ccal$ unambiguously generates the language of self-avoiding walks in $X$ starting at $o$.

Let $a=[\alpha,\beta]$ be an edge of $T_3(X)$ and $T'$ be the component of $T_3(X) \setminus \{\alpha\}$ containing $\beta$. A self-avoiding walk $\pi$ of length at least 1 in $X$ is called a V-walk with direction $a$ if it starts at a vertex $u \in \lambda(a)$ and all edges of the walk are contained in $A(T')$. A U-walk with direction $a$ is a V-walk with direction $a$ also ending in $\lambda(a)$.

Then the following statements hold.
\begin{enumerate}
\item[(a)] For $a \in ET$ and $u \in \lambda(a)$, the variable $U_{\bar{a},\bar{u}}$ unambiguously generates the language of all U-walks with direction $a$ starting at $u$.
\item[(b)] For $a \in ET$ and $u \in \lambda(a)$, the variable $V_{\bar{a},\bar{u}}^0$ unambiguously generates the language of all V-walks with direction $a$ starting at $u$, and the variable $V_{\bar{a},\bar{u}}^1$ unambiguously generates the language of all V-walks with direction $a$ starting at $u$ and not containing the second vertex of $\lambda(a)$.
\item[(c)] The start symbol $S$ unambiguously generates the language $L_{SAW,o}\,$.
\end{enumerate}

Rigorous proofs for these statements are long and technical, so we only sketch them here.

Let $\alpha$ be a vertex of $T$ and $A(\alpha)$ be its corresponding 3-block. Define the projection $\pi(\alpha)$ of a self-avoiding walk $\pi$ onto $A(\alpha)$ in the following way: Let $v_1, \dots, v_k$ be the sequence of vertices of $\pi$ which are contained in $A(\alpha)$ ordered by their occurrence in $\pi$ and $\pi_i$ be the subwalk of $\pi$ connecting $v_i$ and $v_{i+1}\,$. For every $i$ there are 2 cases: If $\pi_i$ is a single edge of $A(\alpha)$, add this edge to $\pi(\alpha)$. Otherwise there is an edge $a$ of $T$ such that $\pi_i$ is a U-walk with direction $a$ and we add the virtual edge $\lambda(a)$ connecting $v_i$ and $v_{i+1}\,$. This edge can be seen as a shortcut for the U-walk $\pi_i$. For reasons of ambiguity, if $\pi_{k-1}$ is a U-walk and $\pi$ ends at $v_k$, we do not add the corresponding virtual edge and consider our projection to end at $v_{k-1}\,$. The resulting $\pi(\alpha)$ is a self-avoiding walk in $A(\alpha)$. \par

For every U-walk $\pi$ with direction $a$ starting at $u$ we can obtain the word $\ell(\pi(a^+))$ corresponding to the projection of $\pi$ onto the 3-block $A(a^+)$ from the variable $U_{a,u}$ in a single derivation step. A simple induction shows that $\pi$ is generated by $U_{a,u}\,$. Moreover for any word $w$ generated by $U_{a,u}\,$, the walk $\pi$ starting at $u$ and having label $w$ is a U-walk with direction $a$ starting at $u$. The word $w$ can only be obtained from a unique sequence of rightmost derivations because in every step we have to generate the string corresponding to the projection of $\pi$ onto some 3-block. Statement (a) follows.\par

For any V-walk $\pi$ with direction $a$ starting at $u$ we can derive $\ell(\pi(a^+))$ in a single step of derivation from $V_{\bar{a},\bar{u}}^0$ and if $\pi$ does not contain the second vertex of $\lambda(a)$, also from $V_{\bar{a},\bar{u}}^1$. Using (a) and induction, this implies that $\ell(\pi)$ is generated by the corresponding variables.  Furthermore note that $V_{\bar{a},\bar{u}}^0$ only appears in the right hand side of productions, if the second vertex of $\lambda(a)$ is not contained in the projection of $\pi$ on any block previously visited by $\pi$. Using this observation it is not hard to show that walks corresponding to words generated by $V_{\bar{a},\bar{u}}^0$ and $V_{\bar{a},\bar{u}}^1$ are indeed V-walks with direction a starting at $u$ and that every such walk is generated unambiguously. We obtain (b).

Finally, for every SAW $\pi$ starting at $o$ we can derive $\ell(\pi(\alpha_o))$ in a single step from $S$ and (c) follows from (a) and (b) as before.
\end{proof}

We are now able to prove the main result in this section. This result together with Theorem \ref{thm:main-1} then implies Theorem \ref{thm:main}.

\begin{proof}[\bf Proof of Theorem \ref{thm:main-2}]
The group $\Gamma=\AUT(X,\ell)$ acts quasi-transitively on the block-cutvertex tree $T_2(X)$. Let $Y= \Gamma \backslash T_2(X)$ denote the finite, connected factor graph. Every vertex of $Y$ corresponds to either a class of blocks or a class of cutvertices of $X$ under the action of $\Gamma$, so for $e \in EY$ we can again write $B(e)$ (block) and $c(e)$ (cutvertex) for the two vertices of $Y$ incident to $e$. Note that for any cutvertex $c$ of $X$, all edges of $T_2(X)$ incident to $c$ lie in different orbits with respect to $\Gamma$ because by Lemma \ref{lem:fingen}, $\Gamma$ acts fixed-point-freely on $X$. For $e \in EY$ let $N(e)=\{f\in EY\setminus \{e\} \mid c(e)=c(f)\}$. Fix some block $B_o$ containing the vertex $o$.\par

Let $L$ be the regular language generated by the grammar $\Ccal=(\V,\Si,\Pb,S)$, where 
\begin{alignat*}{3}
&\V &&=\;&&\{S\} \cup \{W_e \mid e \in EY\}, \\[4pt]
&\Si &&=&& \{\alpha_o\}  \; \cup\; \{\alpha_{o,f} \mid f\in EY\} \; \cup \; \{\alpha_e \mid e \in EY\} \; \cup \; \{\alpha_{e,f} \mid e,f \in EY\},
\end{alignat*}
and the set of productions $\Pb$ consists of
\begin{alignat*}{3}
	&S &&\vdash  \alpha_{o} \,,\\[4pt]
	&S &&\vdash \alpha_{o,f} W_g \quad &&\parbox[t]{25em}{for $f \in EY$, $g \in N(f)$,} \\[4pt]
	&W_e &&\vdash \alpha_e  \quad &&\parbox[t]{25em}{for $e \in EY$, } \\[4pt] 
	&W_e &&\vdash \alpha_{e,f} W_g \quad &&\parbox[t]{25em}{for $e,f \in EY$, $g \in N(f)$.} 
\end{alignat*}

For $e \in EY$ let $L(e)$ be the language of self-avoiding walks of length at least 1 in the block $B(e)$ starting at a fixed representative $c$ of the vertex $c(e)$, seen as a class of vertices of $X$. Clearly, $L(e)$ does not depend on the choice of $c$. By Lemma \ref{lem:blocksqt} the stabilizer $\Gamma_{B(e)}$ acts quasi-transitively on the graph $B(e)$ and by assumption on $X$ all ends of $B(e)$ have size at most 2. Hence Lemma \ref{lem:2conlangcf} applies and $L(e)$ is context-free. Denote for $f \in EY$ by $L(e,f)$ the subset of $L(e)$ corresponding to walks ending at vertices of $B(e)$, which are in the vertex class $c(f)$. Note that $L(e,f)$ may be an empty language if $B(e)\neq B(f)$ or if $e=f$ and $c$ is the only representative of $c(e)$ in $B(e)$. As the intersection of the unambiguous context-free language $L(e)$ and the regular language of all walks starting at $c$ and ending at a representative of $c(f)$, $L(e,f)$ is unambiguous context-free. In a similar way let $L(o)$ be the language of all walks in $B_o$ starting at $o$ and $L(o,f)$ be the subset of $L(o)$ corresponding to walks ending at a representative of $c(f)$.\par 

Let $\varphi$ be the substitution of languages given for $e,f \in EY$ by
\begin{equation*}
\varphi(\alpha_{o})= L(o), \quad \varphi(\alpha_{o,f})=L(o,f), \quad \varphi(\alpha_{e})=L(e), \quad \varphi(\alpha_{e,f})=L(e,f).
\end{equation*}
Then by  \cite[Thm. 3.4.1]{Ha} the result $\varphi(L)$ of the substitution is context-free. 

If every end of $X$ is of size at most 1, by Lemma \ref{lem:blocksfinite} every block of $X$ is finite and thus also the language of self-avoiding walks in the block is finite. We conclude that in this case $\varphi(L)$ is regular.

For $e \in ET_2(X)$, a self-avoiding walk $\pi$ of length at least 1 in $X$ is called a W-walk with direction $e$ if it starts at $c(e)$ and its first edge lies in the block $B(e)$. Then the following statements hold:
\begin{enumerate}
\item[(a)] The variable $W_e$ generates a regular language $L_e$ such that $\varphi(L_e)$ is the language of W-walks with direction $e$.
\item[(b)] $\varphi(L)$ is the unambiguous context-free language $L_{SAW,o}$ of self-avoiding walks in $X$ starting at $o$.
\end{enumerate}
As before we will only sketch the proofs for these statements. For $e \in ET_2(X)$ we denote by $\bar e$ the orbit of $e$ under $\Gamma$, which is an edge of $Y$.

Let $\pi$ be a W-walk with direction $e \in ET_2(X)$. Let $\pi_1$ be the part of $\pi$ contained in the block $B(e)$. If $\pi_1=\pi$, then $\ell(\pi_1)$ is contained in $L(\bar e)$ and therefore obtained in a single step of derivation. Otherwise there is some $f \in ET_2(X)$ such that $\pi$ leaves $B(e)$ via $c(f)$ and $\ell(\pi_1)$ is contained in $L(\bar e,\bar f)$. In this case $\pi$ enters one of the other blocks containing $c(f)$, which are blocks $B(g)$, $g \in ET_2(X)$, such that $\bar g \in N(\bar f)$. The rest of $\pi$ is a W-walk with direction $g$. A simple induction shows that $\ell(\pi)$ is contained in $\varphi(L_e)$. 

On the other hand, every word $w \in \varphi(L_e)$ corresponds to a unique walk $\pi$ starting at $c(e)$ labelled by $w$. This walk $\pi$ is self-avoiding, because the parts of $\pi$ contained in blocks are self-avoiding, and whenever leaving a block $B(f)$, $f \in ET_3(X)$, $\pi$ can never enter $B(f)$ again because $T_2(X)$ is a tree and $N(\bar f)$ does not contain $\bar f$. This implies that $\pi$ is a W-walk with direction $e$ and proves (a).

In the same way it can be seen that for any $\pi \in \Pi_{SAW,o}\,$, the label of the part $\pi_0$ contained in $B_o$ is contained in $L(o)$ if $\pi_0=\pi$ and in $L(o,f)$, if $\pi$ leaves $B_o$ via a vertex in the class $c(f)$. Therefore $\ell(\pi)$ is contained in $\varphi(L)$. 
In the same way as in (a) we obtain that every walk starting in $o$ and corresponding to a word in $\varphi(L)$ is self-avoiding. 

Let $w \in \varphi(L)$ and $\pi$ be the SAW starting at $o$ and having label $w$. There is a unique word $w' \in L$ such that $w \in \varphi(w')$, which is given by the sequence of blocks visited by $\pi$. Furthermore, for every $a \in \Si$ the language $\varphi(a)$ is unambiguous context-free, hence $\varphi(L)$ is also unambiguous context-free. This shows statement (b) and finishes the proof.
\end{proof}

\section{Discussion and examples}\label{sec:final}
The proof of Theorem \ref{thm:main-2} is constructive and the obtained grammar can be used to calculate the generating function of self-avoiding walks $F_{SAW}(t|o)$ and the connective constant of graphs satisfying the conditions of the theorem.\par 
Given some language $L$, the ordinary generating function $F_L(t)$ is the power series
\begin{equation*}
	F_L(t)= \sum_{w\in L} t^{|w|}.
\end{equation*}

Using the algebraic theory of context-free languages {\sc Chomsky and Sch\"utzenberger} developed in \cite{ChSch}, the productions of an unambiguous context-free grammar $\Ccal$ generating the language $L$ can be translated into a system of polynomial equations having as one of its solutions the language generating function $F_L(t)$.

Recall that a power series $F(t)$ is called algebraic over a field $K$, if it satisfies a polynomial equation of the form $P\bigl(t,F(t)\bigr)=0$, where $P(x,y)$ is a bivariate polynomial in $K[x,y]$. From classical elimination theory it follows that any component of a solution of a system of polynomial equations having coefficients in $K$ is algebraic over $K$, in particular $F_L(t)$ is algebraic over $\mathbb{Q}$.

Let $X$ be a connected, locally finite, deterministically edge labelled graph and $o$ be a vertex of $X$ such that the language $L_{SAW,o}$ is unambiguous context-free. Then the label function $\ell$ acts as a bijection between the set $\Pi_{SAW,o}$ of self-avoiding walks starting at $o$ and its language $L_{SAW,o}\,$, whence the SAW-generating function satisfies $F_{SAW}(t|o)=F_{L_{SAW,o}}(t)$. All singularities of algebraic functions are algebraic numbers, so in particular the radius of convergence of $F_{SAW}(t|o)$ and thus also the connective constant of the graph $X$ are algebraic numbers. 

Note that there are quasi-transitive graphs $X$ which do not admit any deterministic labelling $\ell$ such that $\AUT(X,\ell)$ acts quasi-transitively on $X$. A simple example is the Grandparent Graph $GP$, given by {\sc Trofimov} in \cite{Tr} as a graph with a non-unimodular automorphism group $\Gamma$. As a consequence, every subgroup of $\Gamma$ acting quasi-transitively on $GP$ cannot act fixed point freely and by Lemma \ref{lem:fingen}, $GP$ admits no labelling as above. 
Nevertheless, the following statement can be shown using the previous discussions and the ideas and proofs of Section \ref{sec:cf}, but generating functions counting walks instead of grammars and language theory. 
\begin{cor}
Whenever all ends of a connected, locally finite, quasi-transitive graph $X$ are of size at most 2, the SAW-generating function $F_{SAW}(t|o)$ is algebraic over $\mathbb{Q}$. In particular the connective constant $\mu(X)$ is an algebraic number.
\end{cor}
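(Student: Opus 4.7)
The plan is to follow the constructive proofs of Theorem \ref{thm:main-2} and Theorem \ref{lem:2conlangcf}, but to replace the unambiguous context-free grammars by finite systems of polynomial equations whose unknowns are ordinary generating functions counting self-avoiding walks of various types. The deterministic labelling was used there only to identify walks with words; here we count walks directly, using the quasi-transitive action of $\AUT(X)$ on the block-cutvertex tree $T_2(X)$ and on the $3$-block tree of each $2$-connected block. Quasi-transitivity ensures that only finitely many orbit types of walks need to be tracked.

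First I would mimic the proof of Theorem \ref{thm:main-2}. For each directed edge $e$ of $T_2(X)$ let $W_e(t)$ be the generating function counting W-walks with direction $e$, and let $L_e(t)$, $L_{e,f}(t)$ be generating functions counting self-avoiding walks inside the block $B(e)$ starting at $c(e)$ (respectively, also ending at a fixed representative of $c(f)$). Mirroring the language substitution from that proof, one obtains the functional equations
\[
 W_e(t) \;=\; L_e(t) \;+\; \sum_{f \in ET_2(X)} L_{e,f}(t)\!\!\sum_{g \in N(f)}\!\! W_g(t),
\]
together with an analogous formula for $F_{\SAW}(t\mid o)$. By quasi-transitivity of $\AUT(X)$ on $T_2(X)$, only finitely many distinct series $W_e$, $L_e$, $L_{e,f}$ appear, so this is a finite linear system in the $W_e$'s whose coefficients are the block series.

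Next I would handle the block generating functions. If every end of $X$ has size at most $1$, Lemma \ref{lem:blocksfinite} says each block is finite, so every $L_e(t)$ is a polynomial and the system is even rational. Otherwise, by Lemma \ref{lem:blocksqt} each block $B(e)$ is $2$-connected, locally finite and quasi-transitive under its stabilizer, with all ends of size at most $2$; by Lemma \ref{lem:3blocksfinite} and Theorem \ref{thm:3blocktree} its $3$-block tree $T_3(B(e))$ has only finite $3$-blocks. Paralleling Theorem \ref{lem:2conlangcf}, for each orbit representative $a \in ET_3(B(e))$ and $u \in \lambda(a)$, I would introduce series $U_{a,u}(t)$, $V_{a,u}^0(t)$, $V_{a,u}^1(t)$ counting U-walks and V-walks with direction $a$ starting at $u$. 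The same case analysis used there --- on whether the second endpoint of $\lambda(a)$ has already been visited --- now produces, via projections onto the finite $3$-blocks $A(a^+)$ weighted by $t^{|\pi|}$, a finite polynomial system in these series with coefficients in $\mathbb{Q}[t]$. Substituting the resulting algebraic expressions for $L_e$ and $L_{e,f}$ into the $W_e$-system yields a finite polynomial system over $\mathbb{Q}[t]$ whose solution vector includes $F_{\SAW}(t\mid o)$.

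The main obstacle is ensuring that the bijective bookkeeping of the grammar proof transfers faithfully to an enumeration identity, so that no SAW is counted twice and none is omitted. But the unambiguity argument of Theorem \ref{lem:2conlangcf} performs exactly this case distinction --- it is what guarantees a unique rightmost derivation for each word --- so the identical combinatorial decomposition produces a \emph{bijection} between SAWs of each type and the disjoint union of combinatorial objects on the right-hand side of the corresponding equation. Once the system is in place, classical elimination theory shows that each component of its solution is algebraic over $\mathbb{Q}(t)$, so in particular $F_{\SAW}(t\mid o)$ satisfies a bivariate polynomial equation over $\mathbb{Q}$. The addendum on $\mu(X)$ then follows because the reciprocal of the radius of convergence of an algebraic power series over $\mathbb{Q}$ is a singularity of the associated algebraic function and therefore algebraic over $\mathbb{Q}$.
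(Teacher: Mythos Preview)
Your proposal is correct and is precisely the approach the paper itself indicates: the paper does not give a detailed proof of this corollary but only remarks that it ``can be shown using the previous discussions and the ideas and proofs of Section~\ref{sec:cf}, but generating functions counting walks instead of grammars and language theory.'' Your sketch fleshes out exactly this hint---replacing the variables of the grammars in Theorems~\ref{lem:2conlangcf} and~\ref{thm:main-2} by the corresponding walk-counting series and using the unambiguity of the decomposition to obtain a finite polynomial system over $\mathbb{Q}[t]$.
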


Alm and Janson showed in \cite{AJ} that the generating functions of self-avoiding walks on one-dimensional lattices are algebraic over $\mathbb{Q}$, independently of the size of the ends. In future work, we shall examine how our results can be extended to quasi-transitive graphs having only thin ends. A further issue for future work is to investigate under which structural conditions on the graph, $L_{SAW}$ is accepted by a multipass automaton as in \cite{CCFST}.

Some concrete examples, where we used Theorem \ref{thm:main-2} and its constructive proof to obtain the SAW-generating functions and the connective constants, include the ``sandwich" of two $k$-regular trees, which was already treated in a slightly different way in \cite{Li}, the above mentioned Grandparent Graph and the Cayley graph of the group $\Gamma=\langle a,c \mid c^2=1 \rangle$ with respect to the generators $S=\{a,b=ca,c\}$, which has non-transitive blocks.\par
Finally we provide the following example, where we end up with an algebraic SAW-generating function, which is not rational.

\begin{exa}
Consider the group $\Gamma=\langle a,b,c \mid a^2=b^2=c^2=(ab)^3=(bc)^2=1 \rangle$, which is a free product with amalgamation of the dihedral groups $D_3$ and $D_2$ and let $X$ be the Cayley graph $X\!\left(\Gamma,\{a,b,c\}\right)$. Edges are labelled by the generators in the usual way. The resulting graph can be seen in Figure \ref{fig:caleygraphgroupac}. All ends of $X$ have size 2, so the language of self-avoiding walks in $X$ is context-free. The 3-block-decomposition of $X$ can be seen in Figure \ref{fig:caleygraphgroupac}, where virtual edges are dashed. It yields 3 types of 3-blocks, which will be denoted by $A$, $B$ and $C$ in correspondence to the labels they contain. $ET_3(X)$ contains 4 types of edges, they will be denoted by $AB$, $BA$, $BC$, $CB$, depending on the pair of 3-blocks they connect (e.g. $AB$ starts at $A$ and ends at $B$). \par
\begin{figure}[ht]
\pgfdecorationsegmentamplitude=0.5pt
\pgfdecorationsegmentlength=0.5cm
\centering
\begin{subfigure}{0.50\textwidth}
	\centering
	\begin{tikzpicture}
	\begin{scope}[scale=0.9]
	\foreach \i in {0,1,2} {
		\draw (120*\i:1)--(120*\i+60:1) -- (120*\i+120:1);
		\node at (120*\i-30:1.1) {$a$};
		\node at (120*\i+30:0.6) {$b$};
		\draw (120*\i+30:{1+sqrt(3)})+(120*\i+180:1) --(120*\i+60:1);
		\node at ($(120*\i+30:{(1+sqrt(3))/2})+(120*\i+120:0.7)$) {$c$};
		\node at ($(120*\i+30:{(1+sqrt(3))/2})+(120*\i-60:0.7)$) {$c$};
		\draw (120*\i+30:{1+sqrt(3)})+(120*\i+240:1) --(120*\i:1);
		\foreach \j in {0,1,2} {
			\node at ($(120*\i+30:{1+sqrt(3)})+(120*\i+120*\j+30:1.1)$) {$a$};
			\node at ($(120*\i+30:{1+sqrt(3)})+(120*\i+120*\j-30:0.6)$) {$b$};
			\draw (120*\i+30:{1+sqrt(3)})++(120*\i+120*\j:1) -- ++(120*\i+120*\j+120:1);
			\draw (120*\i+30:{1+sqrt(3)})++(120*\i+120*\j:1) -- ++(120*\i+120*\j-120:1);
		}
		\draw[dashed] (120*\i+30:{1+sqrt(3)})++(120*\i:1) -- ++(120*\i-30:0.5);
		\draw[dashed] (120*\i+30:{1+sqrt(3)})++(120*\i-60:1) -- ++(120*\i-30:0.5);
		\draw[dashed] (120*\i+30:{1+sqrt(3)})++(120*\i+60:1) -- ++(120*\i+90:0.5);
		\draw[dashed] (120*\i+30:{1+sqrt(3)})++(120*\i+120:1) -- ++(120*\i+90:0.5);
	}	
	\end{scope}
  	\end{tikzpicture}
\end{subfigure}
\hfill
\begin{subfigure}{0.45\textwidth}
	\centering
	\begin{tikzpicture}
	\begin{scope}[scale=0.8]
	\foreach \i in {0,1,2} {
		\draw[dashed] (120*\i:1)--(120*\i+60:1);
		\draw (120*\i+60:1) -- (120*\i+120:1);
		\draw ++(120*\i+30:{0.7+sqrt(3)/2})+(120*\i+120:0.5) --+(120*\i-60:0.5);
		\draw[dashed] ++(120*\i+30:{0.7+sqrt(3)/2})+(120*\i+120:0.5) to[out=120*\i-120,in=120*\i+180] +(120*\i-60:0.5);
		\draw[dashed] ++(120*\i+30:{0.7+sqrt(3)/2})+(120*\i+120:0.5) to[out=120*\i,in=120*\i+60] +(120*\i-60:0.5);
		\draw[dashed] ++(120*\i+30:{1.4+sqrt(3)/2})+(120*\i+120:0.5) --+(120*\i-60:0.5);
		\draw[dashed] ++(120*\i+30:{2.4+sqrt(3)/2})+(120*\i+120:0.5) --+(120*\i-60:0.5);
		\draw ++(120*\i+30:{1.4+sqrt(3)/2})++(120*\i+120:0.5) --+(120*\i+30:1);
		\draw ++(120*\i+30:{1.4+sqrt(3)/2})++(120*\i-60:0.5) --+(120*\i+30:1);
		\draw (-90:{3.8+sqrt(3)})++(120*\i:1) -- ++(120*\i+120:1);
		\draw[dashed] (-90:{3.8+sqrt(3)})++(120*\i:1) -- ++(120*\i-120:1);
	}
	\draw ++(-90:{3.1+sqrt(3)/2})+(0:0.5) --+(180:0.5);
	\draw[dashed] ++(-90:{3.1+sqrt(3)/2})+(0:0.5) to[out=120,in=60] +(180:0.5);
	\draw[dashed] ++(-90:{3.1+sqrt(3)/2})+(0:0.5) to[out=-120,in=-60] +(180:0.5);
	\node at (-30:1.3){$A$};
	\node at ($(30:{0.7+sqrt(3)/2})+(-60:0.9)$){$B$};
	\node at ($(30:{1.9+sqrt(3)/2})+(-60:0.9)$){$C$};
	\node at ($(150:{0.7+sqrt(3)/2})+(-120:0.9)$){$B$};
	\node at ($(150:{1.9+sqrt(3)/2})+(-120:0.9)$){$C$};
	\node at ($(-90:{0.7+sqrt(3)/2})+(0:0.9)$){$B$};
	\node at ($(-90:{1.9+sqrt(3)/2})+(0:0.9)$){$C$};
	\node at ($(-90:{3.1+sqrt(3)/2})+(0:0.9)$){$B$};
	\node at ($(-90:{3.8+sqrt(3)})+(30:1.3)$){$A$};
	\end{scope}
  	\end{tikzpicture}
\end{subfigure}
  	\caption{The labelled graph $X$ (left) and its 3-block-decomposition (right).}
  	\label{fig:caleygraphgroupac}
\end{figure}
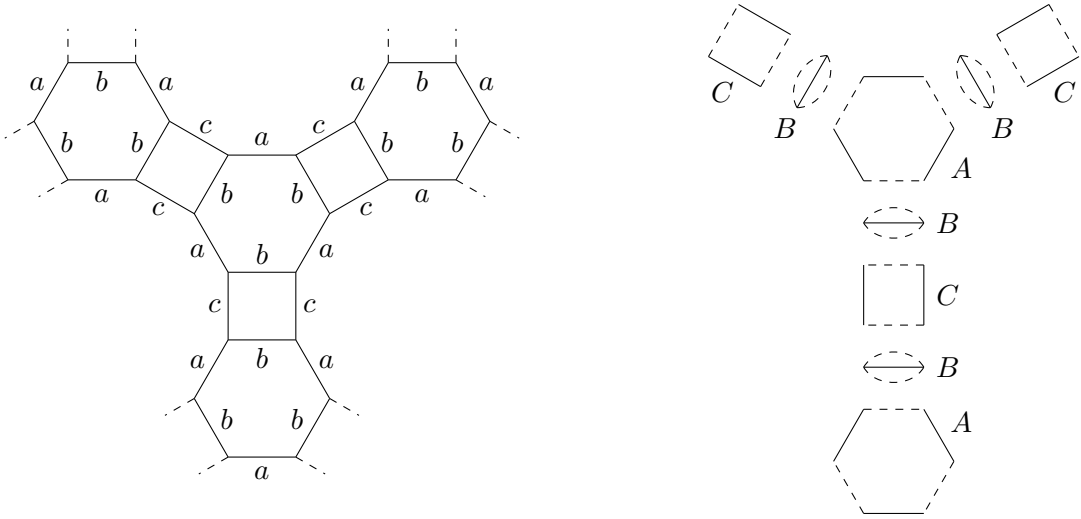

A grammar generating the languages of self-avoiding walks constructed as in the proof of Lemma \ref{lem:2conlangcf} is given by the following productions:
\begin{align*}
S\;\; &\vdash \; \mathrlap{\epsilon \mid V_{BA}^0 \mid V_{BC}^0 \mid b \mid b V_{BA}^1 \mid b V_{BC}^1 \mid U_{BA} V_{BC}^1 \mid U_{BC} V_{BA}^1\,,}\\[2pt]
 V_{BA}^0\; &\vdash \; \mathrlap{a \mid a V_{AB}^0 \mid a U_{AB} a \mid a U_{AB} a V_{AB}^0 \mid a U_{AB} a U_{AB} a\,,}\\[2pt]
 V_{BA}^1\; &\vdash \; \mathrlap{a \mid a V_{AB}^0 \mid a U_{AB} a \mid a U_{AB} a V_{AB}^0\,,}\\[2pt]
 V_{AB}^0\; &\vdash \; V_{BC}^0 \mid b \mid b V_{BC}^1\,,  & V_{AB}^1\; &\vdash \; V_{BC}^1\,, \\[2pt]
 V_{BC}^0\; &\vdash \; c \mid c U_{CB} c \mid c V_{CB}^0\,, & V_{BC}^1\; &\vdash \; c \mid c V_{CB}^0\,, \\[2pt]
 V_{CB}^0\; &\vdash \; V_{BA}^0 \mid b \mid b V_{BA}^1\,, & V_{CB}^1\; &\vdash \; V_{BA}^1\,, \\[2pt]
 U_{AB}\; &\vdash \; b \mid U_{BC}\,,  & U_{BA}\; &\vdash \; a U_{AB} a U_{AB} a\,, \\[2pt]
 U_{CB}\; &\vdash \; b \mid U_{BA}\,,  & U_{BC}\; &\vdash \; c U_{CB} c\,. 
\end{align*}

Translating this set of productions into the corresponding system of equations and solving this system yields the SAW-generating function
\begin{equation*}
	F_{SAW}(t|o)=\frac{P(t)+Q(t) \sqrt{-4 t^8-4 t^6+1}}{t^{12} \left(2 t^{10}+8 t^9+13 t^8+12 t^7+7 t^6+4 t^5+5 t^4+4 t^3+t^2-2 t-1\right)} ,
\end{equation*}
where $P(t)$ and $Q(t)$ are two polynomials of degree 23 and 17, respectively. The connective constant of $X$ is the reciprocal of the smallest positive root of the denominator of $F_{SAW}(t|o)$:
\begin{equation*}
\mu(X) \approx 1.8306977.
\end{equation*}
\end{exa}

\end{document}